\def\index#1{}
\theoremstyle{definition}
\newtheorem{definition}{Definition}
\newtheorem{remark}[definition]{Remark}
\newtheorem{example}[definition]{Example}
\newtheorem{examples}[definition]{Examples}
\newtheorem*{exampcont}{Example~\ref{example:vanishing} continued}
\theoremstyle{theorem}
\newtheorem{proposition}[definition]{Proposition}
\newtheorem{lemma}[definition]{Lemma}
\newtheorem{corollary}[definition]{Corollary}
\numberwithin{equation}{section}
\numberwithin{definition}{section}
\begin{document}

\begin{frontmatter}
\pretitle{Research Article}

\title{Arithmetic of (independent) sigma-fields on probability spaces}

\author{\inits{M.}\fnms{Matija}~\snm{Vidmar}\ead[label=e1]{matija.vidmar@fmf.uni-lj.si}\orcid{0000-0002-1217-4054}}
\address{Department of Mathematics, \institution{University of Ljubljana}, \cny{Slovenia}}



\markboth{M. Vidmar}{Arithmetic of (independent) sigma-fields on probability spaces}

\begin{abstract}
This note gathers what is known about, and provides some new results
concerning the operations of intersection, of ``generated
$\sigma$-field'', and of ``complementation'' for (independent) complete
$\sigma$-fields on probability spaces.
\end{abstract}
\begin{keywords}
\kwd{Lattice of complete $\sigma$-fields}
\kwd{generated $\sigma$-field}
\kwd{intersection of $\sigma$-fields}
\kwd{independent complements}
\end{keywords}
\begin{keywords}[MSC2010]%
\kwd{60A10}
\kwd{60A05}
\end{keywords}

\received{\sday{19} \smonth{1} \syear{2019}}
\revised{\sday{17} \smonth{5} \syear{2019}}
\accepted{\sday{17} \smonth{5} \syear{2019}}
\publishedonline{\sday{20} \smonth{6} \syear{2019}}
\end{frontmatter}

\section{Introduction}

Let $(\varOmega,\mathcal{M},\mathbb{P})$ be a probability space and let
$\varLambda$ be the collection of all complete sub-$\sigma$-fields of
$\mathcal{M}$. (We stress here that $\mathbb{P}$ need not itself be
complete to begin with. Complete just means containing $0_{\varLambda}:=
\mathbb{P}^{-1}(\{0,1\})$ -- the $\mathbb{P}$-trivial events of
$\mathcal{M}$.) $\sigma(\times\times)$ (resp. $\overline{\sigma}(
\times\times)$) is the smallest (resp. complete) $\sigma$-field on
$\varOmega$ containing or making measurable whatever stands in lieu of
$\times\times$. Then for $\{\mathcal{X},\mathcal{Y},\mathcal{Z}\}
\subset\varLambda$ set $\mathcal{X}\land\mathcal{Y}:=\mathcal{X}
\cap\mathcal{Y}$ and $\mathcal{X}\lor\mathcal{Y}:=\sigma(
\mathcal{X}\cup\mathcal{Y})$; write $\mathcal{X}\perp\!\!\!\!\perp
\mathcal{Y}$ if $\mathcal{X}$ and $\mathcal{Y}$ are independent, in
which case set $\mathcal{X}+\mathcal{Y}:=\mathcal{X}\lor\mathcal{Y}$;
finally, say $\mathcal{X}$ is complemented by $\mathcal{Y}$ in
$\mathcal{Z}$, or that $\mathcal{Y}$ is a complement of $\mathcal{X}$
in $\mathcal{Z}$, if $\mathcal{Z}=\mathcal{X}+\mathcal{Y}$.

We are interested in exposing the salient ``arithmetical rules'' of the
operations $\land$, $\lor$, and especially of $+$ and the notion of
a complement, delineating their scope through (counter)examples. Apart
from pure intellectual curiosity, the justification for the interest in
such matters --- that may seem a bit ``dry'' at first --- can be seen
as coming chiefly from the following
observations.

\smallskip
\textbf{(1)} Even though the concepts involved are prima facie very
simple, the topic is not trivial and intuition can often mislead. The
following examples give already a flavor of this; in them, and in the
rest of this paper, equiprobable sign means a
$(\{-1,1\},2^{\{-1,1\}})$-valued random element $\xi$ with
$\mathbb{P}(\xi=1)=\mathbb{P}(\xi=-1)=1/2$.

%
\begin{example}[$\land$-$\lor$ distributivity may fail]
\label{example:dist}%
\leavevmode
\begin{enumerate}[(a)]%
\item
\label{dist:basic}
If $\xi_{1}$ and $\xi_{2}$ are independent equiprobable signs,\index{independent ! equiprobable signs} then
taking $\mathcal{X}=\overline{ \sigma}(\xi_{1})$, $\mathcal{Y}=
\overline{
\sigma}(\xi_{1}\xi_{2})$ and $\mathcal{Z}=\overline{\sigma}(\xi_{2})$,
the $\sigma$-fields $\mathcal{X},\mathcal{Y},\mathcal{Z}$ are pairwise
independent\index{pairwise independent} and $(\mathcal{X}\lor\mathcal{Z})\land(\mathcal{Y}
\lor\mathcal{Z})=\overline{\sigma}(\xi_{1},\xi_{2})$, while
$(\mathcal{X}\land\mathcal{Y})\lor\mathcal{Z}=0_{\varLambda}\lor
\mathcal{Z}=\overline{\sigma}(\xi_{2})$; so $(\mathcal{X}\lor
\mathcal{Z})\land(\mathcal{Y}\lor\mathcal{Z})\ne(\mathcal
{X}\land
\mathcal{Y})\lor\mathcal{Z}$. The same example also shows that one does
not in general have $(\mathcal{X}\land\mathcal{Z})\lor(\mathcal{Y}
\land\mathcal{Z})= (\mathcal{X}\lor\mathcal{Y})\land\mathcal{Z}$.
\item
\cite[Exercise/Warning~4.12]{williams}%
\label{example:warning}
Let
$\mathsf{Y}=(\mathsf{Y}_{n})_{n\in\mathbb{N}_{0}}$
be a sequence
of independent equiprobable signs.\index{independent ! equiprobable signs} For $n\in\mathbb{N}$ define
$\mathsf{X}_{n}:=\mathsf{Y}_{0}\cdots\mathsf{Y}_{n}$;
set
$\mathcal{Y}:=\overline{\sigma}(\mathsf{Y}_{1},\mathsf
{Y}_{2},\ldots
)$
and
$\mathcal{X}_{n}:=\overline{\sigma}(\mathsf{X}_{m}:m\in
\mathbb{N}_{\geq n})$ for $n\in\mathbb{N}$.
Then the $\mathcal{X}_{n}$, $n\in\mathbb{N}$, are decreasing, but
$\land_{n\in\mathbb{N}}(
\mathcal{X}_{n}\lor\mathcal{Y})\ne(\land_{n\in\mathbb{N}_{0}}
\mathcal{X}_{n})\lor\mathcal{Y}$.
Indeed the
$\mathsf{X}_{n}$,
$n\in\mathbb{N}$, are independent equiprobable signs,\index{independent ! equiprobable signs} so by
Kolmogorov's zero-one law $\land_{n\in\mathbb{N}}\mathcal{X}_{n}=0_{
\varLambda}$. On the other hand
$\mathsf{Y}_{0}$
is measurable w.r.t.
$\overline{\sigma}(\mathsf{Y})=\land_{n\in\mathbb{N}}(\mathcal{X}
_{n}\lor\mathcal{Y})$ and at the same time it is independent of
$\mathcal{Y}$. (For another related example see \cite{Yor1992}.)
\end{enumerate}
\end{example}

%
\begin{example}[Complements may not exist]
\label{example:not-exist}
If $\xi_{1}$, $\xi_{2}$ are independent equiprobable signs,\index{independent ! equiprobable signs} then
$\overline{\sigma}(\{\xi_{1}=1\}\cup\{\xi_{1}=-1,\xi_{2}=1\})$
has no
complement in $\overline{\sigma}(\xi_{1},\xi_{2})$.
\end{example}

%
\begin{example}[Complements may not be unique]
\label{example:not-unique}
Take again a pair of independent equiprobable signs $\xi_{1}$\index{independent ! equiprobable signs} and
$\xi_{2}$. Then $\overline{\sigma}(\xi_{1})+\overline{\sigma}(\xi
_{2})=\overline{\sigma}(\xi_{1},\xi_{2})$ but also $\overline
{\sigma
}(\xi_{1})+\overline{\sigma}(\xi_{1}\xi_{2})=\overline{\sigma
}(\xi
_{1},\xi_{2})$.
\end{example}

%
\begin{example}[Vanishing of information in the limit]
\label{example:vanishing}%
\cite[Example~1.1; see also the references there]{tsirelson}. Let
$\varOmega=\{-1,1\}^{\mathbb{N}}$, and let $\xi_{i}$, $i\in\mathbb{N}$,
the canonical projections, be independent equiprobable signs\index{independent ! equiprobable signs} generating
$\mathcal{M}=(2^{\{-1,1\}})^{\otimes\mathbb{N}}$. Let $\mathcal{G}
_{n}=\overline{\sigma}(\xi_{1}\xi_{2},\ldots,\xi_{n}\xi_{n+1})$ and
$\mathcal{F}_{n}=\overline{\sigma}(\xi_{n+1},\xi_{n+2},\ldots)$ for
$n\in\mathbb{N}$. Then $\mathcal{G}_{n}+\mathcal{F}_{n}=\mathcal{F}
_{0}=\mathcal{M}$ for all $n\in\mathbb{N}$, and by Kolmogorov's
zero-one law $\mathcal{F}_{\infty}:=\land_{n\in\mathbb{N}}
\mathcal{F}_{n}=0_{\varLambda}$. Furthermore, we have $\mathcal{F}_{n}=
\mathcal{F}_{n+1}+ \mathcal{H}_{n+1}$ and $\mathcal{G}_{n+1}=
\mathcal{G}_{n}+\mathcal{H}_{n+1}$ for all $n\in\mathbb{N}_{0}$, if we
put $\mathcal{H}_{n}:=\mathcal{G}_{n}\land\mathcal
{F}_{n-1}=\overline{
\sigma}(\xi_{n}\xi_{n+1})$ for $n\in\mathbb{N}$. But still
$\mathcal{G}_{\infty}:=\lor_{n\in\mathbb{N}}\mathcal
{G}_{n}=\overline{
\sigma}(\xi_{1}\xi_{2},\xi_{2}\xi_{3},\ldots)\ne\mathcal{M}$, for
instance, because $\xi_{1}$ is non-trivial and independent of~$\mathcal{G}_{\infty}$.
\end{example}
Concerning the failure of the equality $\land_{n\in\mathbb{N}}(
\mathcal{X}_{n}\lor\mathcal{Y})= (\land_{n\in\mathbb{N}_{0}}
\mathcal{X}_{n})\lor\mathcal{Y}$ in
Example~\ref{example:dist}\ref{example:warning}, Chaumont and Yor
\cite[p.~30]{chaumont-yor} write: ``A number of authors, (including the
present authors, separately!!), gave wrong proofs of /this equality/
under various hypotheses. This seems to be one of the worst traps
involving $\sigma$-fields.'' According to Williams
\cite[p. 48]{williams}: ``The phenomenon illustrated by this example
tripped up even Kolmogorov and Wiener. [...] Deciding when we
\emph{can} assert [equality] is a tantalizing problem in many
probabilistic contexts.'' {\'{E}}mery and Schachermayer
\cite[p. 291]{emery} call a variant of Example~\ref{example:vanishing}
``paradigmatic [...], well-known in ergodic theory, [...], independently
discovered by several authors''.

\smallskip
\textbf{(2)} In spite of the subtleties involved, facts concerning the
arithmetic of $\sigma$-fields are not very easily accessible in the
literature, various partial results being scattered across papers and
monographs, as and when the need for them arose.

\smallskip
\textbf{(3)} In broad sense, nondecreasing families of sub-$\sigma$-fields
--- filtrations --- model the flow of information in a probabilistic
context. They are essential to the modern-day proper understanding of
martingales and Markov processes. And since stochastic models are
usually specified by some kind of (conditional) independence structure\index{independence}
(think i.i.d. sequences, L\'{e}vy processes, Markov processes in
general), it is therefore important to understand how such information,
as embodied by $\sigma$-fields, is ``aggregated'' and/or
``intersected'' over (conditionally) independent $\sigma$-fields. The
classical increasing and decreasing martingale convergence theorems\index{martingale convergence}
\cite[Theorem~6.23]{kallenberg}, for instance, involve the generated and
intersected $\sigma$-fields in a key way. Kolmogorov's zero-one law and
its extensions \cite[Corollary 6.25]{kallenberg}, with their many
offsprings, are another example in which the interplay between
independence,\index{independence} intersected, and generated $\sigma$-fields lies at the
very heart of the matter.

\smallskip
\textbf{(4)} More narrowly, the exposition in \cite{tsirelson}
recognizes stochastic noises (generalizations of Wiener and Poissonian
noise) as subsets of the lattice $\varLambda$ satisfying in particular,
and in an essential way, a certain property with respect to independent
complements;\index{independent ! complements} see also \cite{feldman,schramm}.

\smallskip
With the above as motivation,
and following the introduction of
some further notation and preliminaries in
Section~\ref{section:notation}, we investigate below in
Section~\ref{section:arithmetic}, in depth: (I) the distributivity
properties\index{distributivity} of the pair $\land$-$\lor$ for families of $\sigma$-fields
that, roughly speaking, exhibit at least some independence properties\index{independence}
between them;
(II)~the properties of complements\index{complements}
(existence, uniqueness,
etc.). In particular, apart from some trivial observations, we confine
our attention to those statements concerning the arithmetic of
$\sigma$-fields, in which a property of (conditional) independence
intervenes\index{independence} in a non-trivial way (this is of course automatic for (II));
hence the title. For the most part the paper is of an expository nature;
see below for the precise references. In some places a couple of
original complements\index{complements}/extensions are provided.
Section~\ref{section:application} closes with a brief application; other
uses of the presented results are
found in the citations that we
include, as well as in the literature quoted in those.

\section{Further notation and preliminaries}%
\label{section:notation}
Some general notation and vocabulary. For $M\subset [-\infty,\infty]$, $\mathcal{B}_{M}$ will denote the
Borel $\sigma$-field on $M$ for the standard (Euclidean) topology thereon. For
$\sigma$-fields $\mathcal{F}$ and $\mathcal{G}$, $\mathcal{F}/
\mathcal{G}$ is the set of precisely all the
$\mathcal{F}/\mathcal{G}$-measurable maps. A measure on a
$\sigma$-field that contains the singletons of the underlying space
will be said to be diffuse,\index{diffuse} or continuous, if it does not charge any
singleton. Throughout ``a.s.'' is short for ``$\mathbb{P}$-almost
surely'' and $\mathbb{E}$ denotes expectation w.r.t. $\mathbb{P}$. A
random element valued in $([0,1],\mathcal{B}_{[0,1]})$ whose law is the
(trace of) Lebesgue measure on $[0,1]$ will be said to have (the)
uniform law (on $[0,1]$).
\label{notation}

Let now $\{\mathcal{X},\mathcal{Y}\}\subset\varLambda$. Then (i) for
$\mathsf{M}\in\mathcal{M}/\mathcal{B}_{[-\infty,\infty]}$,
$\mathbb{E}[\mathsf{M}\vert\mathcal{X}]$ is the conditional expectation
of $\mathsf{M}$ w.r.t. $\mathcal{X}$ (when $\mathbb{E}[\mathsf{M}^{+}]
\land\mathbb{E}[\mathsf{M}^{-}]<\infty$, in which case $\mathbb{E}[
\mathsf{M}\vert\mathcal{X}]\in\mathcal{X}/\mathcal{B}_{[-\infty,
\infty]}$)\footnote{We will indulge in the usual confusion between
measurable functions and their equivalence classes mod $\mathbb{P}$.
Because we will only be interested in complete $\sigma$-fields this
will be of no consequence.} and as usual $\mathbb{P}[F\vert
\mathcal{X}]=\mathbb{E}[\mathbbm{1}_{F}\vert\mathcal{X}]$ for
$F\in\mathcal{M}$; (ii) we will denote by $\mathbb{E}_{\vert
\mathcal{X}}$ the operator, on $L^{1}(\mathbb{P})$, of the conditional
expectation w.r.t. $\mathcal{X}$: so $\mathbb{E}_{\vert\mathcal{X}}(
\mathsf{M})=\mathbb{E}[\mathsf{M}\vert\mathcal{X}]$ a.s. for
$\mathsf{M}\in L^{1}(\mathbb{P})$; (iii) $\mathcal{X}$ will be said to
be countably generated up to negligible sets, or to be essentially
separable, if there is a denumerable $\mathcal{B}\subset\mathcal{X}$
such that $\mathcal{X}=\overline{\sigma}(\mathcal{B})$: manifestly it
is so if and only if $L^{1}(\mathbb{P}\vert_{\mathcal{X}})$ is
separable, in which case every element $\mathcal{Y}\in\varLambda$ with
$\mathcal{Y}\subset\mathcal{X}$ is countably generated up to negligible
sets, and this is true if and only if there is an $\mathsf{X}\in
\mathcal{X}/\mathcal{B}_{\mathbb{R}}$ with $\mathcal{X}=\overline{
\sigma}(\mathsf{X})$; (iv) if further $\mathcal{Z}\in\varLambda$, we
will write $\mathcal{X}\perp\!\!\!\!\perp_{\mathcal{Z}} \mathcal{Y}$
to mean that $\mathcal{X}$ and $\mathcal{Y}$ are independent given
$\mathcal{Z}$.

%
\begin{remark}
A warning: separability per se is not hereditary. For instance
$\mathcal{B}_{\mathbb{R}}$ is countably generated but the
countable-co-countable $\sigma$-field on $\mathbb{R}$ is not. In
general it is true that completeness will have a major role to play in
what follows, and we shall make no apologies for restricting our
attention to complete sub-$\sigma$-fields from the get go --
practically none of the results presented would be true without this
assumption (or would be true only ``mod $\mathbb{P}$'', which amounts
to the same thing).
\end{remark}

The following basic facts about conditional expectations are often
useful; we will use them silently throughout.
%
%
\begin{lemma}[Independent conditioning\index{independent ! conditioning}]
\label{lema}
Let $\{\mathsf{F},\mathsf{G}\}\subset\mathcal{M}/\mathcal{B}_{[0,
\infty]}$ and let $\{\mathcal{X},\mathcal{Y},\break \mathcal{Z}\}\subset
\varLambda$. If $\mathcal{Y}\lor\sigma(\mathsf{G})\perp\!\!\!\!
\perp\mathcal{X}\lor\sigma(\mathsf{F})$, then $\mathbb{E}[
\mathsf{F}\mathsf{G}\vert\mathcal{X}\lor\mathcal{Y}]=\mathbb{E}[
\mathsf{F}\vert\mathcal{X}]\mathbb{E}[\mathsf{G}\vert\mathcal{Y}]$
a.s.; in particular if $\mathcal{Y}\perp\!\!\!\!\perp\mathcal{X}
\lor\mathcal{Z}$, then $\mathcal{Z}\perp\!\!\!\!\perp_{\mathcal{X}}
\mathcal{Y}$; finally, if $\sigma(\mathsf{F})\perp\!\!\!\!
\perp_{\mathcal{X}}\mathcal{Y}$, then $\mathbb{E}[\mathsf{F}\vert
\mathcal{X}\lor\mathcal{Y}]=\mathbb{E}[\mathsf{F}\vert\mathcal{X}]$
a.s.
\end{lemma}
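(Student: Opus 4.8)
The plan is to prove the first (and main) assertion by a uniqueness-of-measures argument on a $\pi$-system, and then to read off the other two almost for free. For the first assertion, the candidate right-hand side $\mathbb{E}[\mathsf{F}\vert\mathcal{X}]\mathbb{E}[\mathsf{G}\vert\mathcal{Y}]$ is $(\mathcal{X}\lor\mathcal{Y})$-measurable, so it suffices to check the defining identity $\mathbb{E}[\mathsf{F}\mathsf{G}\mathbbm{1}_A]=\mathbb{E}[\mathbb{E}[\mathsf{F}\vert\mathcal{X}]\mathbb{E}[\mathsf{G}\vert\mathcal{Y}]\mathbbm{1}_A]$ as $A$ ranges over the $\pi$-system $\{X\cap Y:X\in\mathcal{X},\,Y\in\mathcal{Y}\}$ generating $\mathcal{X}\lor\mathcal{Y}$. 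First I would reduce to bounded $\mathsf{F},\mathsf{G}$ by truncating to $\mathsf{F}\land n$ and $\mathsf{G}\land n$ — the independence hypothesis is inherited because $\sigma(\mathsf{F}\land n)\subset\sigma(\mathsf{F})$ and likewise for $\mathsf{G}$ — and recover the general nonnegative case at the very end by the monotone convergence theorem for conditional expectations, applied simultaneously to both sides.

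For the bounded case and fixed $X\in\mathcal{X}$, $Y\in\mathcal{Y}$, the heart of the matter is a two-step factorization. Since $\mathsf{F}\mathbbm{1}_X$ is $\mathcal{X}\lor\sigma(\mathsf{F})$-measurable and $\mathsf{G}\mathbbm{1}_Y$ is $\mathcal{Y}\lor\sigma(\mathsf{G})$-measurable, the hypothesis yields $\mathbb{E}[\mathsf{F}\mathsf{G}\mathbbm{1}_X\mathbbm{1}_Y]=\mathbb{E}[\mathsf{F}\mathbbm{1}_X]\,\mathbb{E}[\mathsf{G}\mathbbm{1}_Y]$; each factor rewrites via the defining property of conditional expectation as $\mathbb{E}[\mathbb{E}[\mathsf{F}\vert\mathcal{X}]\mathbbm{1}_X]$ and $\mathbb{E}[\mathbb{E}[\mathsf{G}\vert\mathcal{Y}]\mathbbm{1}_Y]$; and finally, since the hypothesis entails $\mathcal{X}\perp\!\!\!\!\perp\mathcal{Y}$, these recombine into $\mathbb{E}[\mathbb{E}[\mathsf{F}\vert\mathcal{X}]\mathbbm{1}_X\cdot\mathbb{E}[\mathsf{G}\vert\mathcal{Y}]\mathbbm{1}_Y]$, which is the desired right-hand side on $A=X\cap Y$. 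For bounded $\mathsf{F},\mathsf{G}$ both sides define finite measures on $\mathcal{X}\lor\mathcal{Y}$ that agree on this generating $\pi$-system (and on $\varOmega=\varOmega\cap\varOmega$), so they coincide, settling the claim.

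The second assertion follows by specializing the first to $\mathsf{G}\equiv 1$ and $\mathsf{F}$ an arbitrary nonnegative $\mathcal{Z}$-measurable function: then $\mathcal{Y}\lor\sigma(\mathsf{G})=\mathcal{Y}$ and $\mathcal{X}\lor\sigma(\mathsf{F})\subset\mathcal{X}\lor\mathcal{Z}$, so the premise $\mathcal{Y}\perp\!\!\!\!\perp\mathcal{X}\lor\mathcal{Z}$ supplies exactly the independence needed, and the conclusion collapses to $\mathbb{E}[\mathsf{F}\vert\mathcal{X}\lor\mathcal{Y}]=\mathbb{E}[\mathsf{F}\vert\mathcal{X}]$ for every such $\mathsf{F}$; this ``one may drop $\mathcal{Y}$'' property for all $\mathcal{Z}$-measurable integrands is one of the standard characterizations of conditional independence, i.e. precisely $\mathcal{Z}\perp\!\!\!\!\perp_{\mathcal{X}}\mathcal{Y}$. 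The third assertion is a near-identical but shorter $\pi$-system computation using the hypothesis directly: for $X\in\mathcal{X}$, $Y\in\mathcal{Y}$ I would condition on $\mathcal{X}$, pull out the $\mathcal{X}$-measurable factor $\mathbbm{1}_X$, invoke the conditional factorization $\mathbb{E}[\mathsf{F}\mathbbm{1}_Y\vert\mathcal{X}]=\mathbb{E}[\mathsf{F}\vert\mathcal{X}]\mathbb{E}[\mathbbm{1}_Y\vert\mathcal{X}]$ afforded by $\sigma(\mathsf{F})\perp\!\!\!\!\perp_{\mathcal{X}}\mathcal{Y}$, and re-absorb to obtain $\mathbb{E}[\mathsf{F}\mathbbm{1}_X\mathbbm{1}_Y]=\mathbb{E}[\mathbb{E}[\mathsf{F}\vert\mathcal{X}]\mathbbm{1}_X\mathbbm{1}_Y]$, again with truncation plus monotone convergence for unbounded $\mathsf{F}$.

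The only genuine obstacle is bookkeeping rather than depth: ensuring the uniqueness-of-measures step is applied to honestly \emph{finite} measures (hence the truncation), verifying that each (conditional) independence premise descends to the truncated functions, and keeping straight which factorization — unconditional in the first two parts, conditional in the third — is invoked at each step. None of these is hard, but they are exactly the points where a careless argument would go astray.
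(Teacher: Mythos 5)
Your proposal is correct, and for the first and third claims it is essentially the paper's own argument: the same $\pi$-system $\{X\cap Y: X\in\mathcal{X},\,Y\in\mathcal{Y}\}$ and the same factorizations (the paper notes that both sides of the first identity reduce to $\mathbb{E}[\mathsf{F};X]\,\mathbb{E}[\mathsf{G};Y]$, and for the third it runs exactly your computation: $\mathbb{E}[\mathbb{E}[\mathsf{F}\vert\mathcal{X}];X\cap Y]=\mathbb{E}[\mathbb{E}[\mathsf{F}\vert\mathcal{X}]\mathbb{P}[Y\vert\mathcal{X}];X]=\mathbb{E}[\mathbb{E}[\mathsf{F}\mathbbm{1}_{Y}\vert\mathcal{X}];X]=\mathbb{E}[\mathsf{F};X\cap Y]$). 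Your explicit truncation-plus-monotone-convergence bookkeeping is a point in your favour: the paper invokes the $\pi/\lambda$-argument without comment, although for $[0,\infty]$-valued integrands the set functions being compared need not be finite, so some such reduction is genuinely needed. The only real divergence is the second claim. The paper deduces it from the first claim by putting $0_{\varLambda}$ in the second conditioning slot: for nonnegative $\mathsf{Z}\in\mathcal{Z}/\mathcal{B}_{[0,\infty]}$ and $\mathsf{Y}\in\mathcal{Y}/\mathcal{B}_{[0,\infty]}$ it computes $\mathbb{E}[\mathsf{Z}\mathsf{Y}\vert\mathcal{X}]=\mathbb{E}[\mathsf{Z}\mathsf{Y}\vert\mathcal{X}\lor 0_{\varLambda}]=\mathbb{E}[\mathsf{Z}\vert\mathcal{X}]\,\mathbb{E}[\mathsf{Y}]=\mathbb{E}[\mathsf{Z}\vert\mathcal{X}]\,\mathbb{E}[\mathsf{Y}\vert\mathcal{X}]$, i.e.\ it verifies the defining product formula of $\mathcal{Z}\perp\!\!\!\!\perp_{\mathcal{X}}\mathcal{Y}$ with no outside input. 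You instead specialize to $\mathsf{G}\equiv 1$, obtain the projection property $\mathbb{E}[\mathsf{F}\vert\mathcal{X}\lor\mathcal{Y}]=\mathbb{E}[\mathsf{F}\vert\mathcal{X}]$ for all nonnegative $\mathcal{Z}$-measurable $\mathsf{F}$, and then cite the standard equivalence of this with conditional independence. That is valid, but note that the direction of the equivalence you need (projection property implies product formula) is the converse of the lemma's own third claim, so it is not already at hand; it costs an extra (easy, tower-property) argument or an external citation, which the paper's choice of specialization avoids. Both routes prove the same thing; the paper's is marginally more self-contained.
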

\begin{proof}
For the first claim, by a $\pi/\lambda$-argument it suffices to check
that $\mathbb{E}[\mathsf{F}\mathsf{G};X\cap Y]=\mathbb{E}[\mathbb{E}[
\mathsf{F}\vert\mathcal{X}] \mathbb{E}[\mathsf{G}\vert\mathcal{Y}];X
\cap Y]$ for $X\in\mathcal{X}$ and $Y\in\mathcal{Y}$, which is
immediate (both sides are equal to $\mathbb{E}[\mathsf{F};X]
\mathbb{E}[\mathsf{G};Y]$ on account of $\mathcal{Y}\lor\sigma(
\mathsf{G})\perp\!\!\!\!\perp\mathcal{X}\lor\sigma(\mathsf{F})$).
To obtain the second statement, let $\mathsf{Z}\in\mathcal{Z}/
\mathcal{B}_{[0,\infty]}$ and $\mathsf{Y}\in\mathcal{Y}/\mathcal{B}
_{[0,\infty]}$; then \xch{a.s.}{a,s.} $\mathbb{E}[\mathsf{Z}\mathsf{Y}\vert
\mathcal{X}]=\mathbb{E}[\mathsf{Z}\mathsf{Y}\vert\mathcal{X}\lor0_{
\varLambda}]=\mathbb{E}[\mathsf{Z}\vert\mathcal{X}]\mathbb{E}[
\mathsf{Y}]=\mathbb{E}[\mathsf{Z}\vert\mathcal{X}]\mathbb{E}[
\mathsf{Y}\vert\mathcal{X}]$. For the final claim, by a $\pi
/\lambda
$-argument it suffices to check that $\mathbb{E}[\mathsf{F};X\cap Y]=
\mathbb{E}[\mathbb{E}[\mathsf{F}\vert\mathcal{X}];X\cap Y]$ for all
$(X,Y)\in\mathcal{X}\times\mathcal{Y}$. But $\mathbb{E}[\mathbb{E}[
\mathsf{F}\vert\mathcal{X}];X\cap Y]=\mathbb{E}[\mathbb{E}[
\mathsf{F}\vert\mathcal{X}]\mathbb{P}[Y\vert\mathcal{X}];X]=
\mathbb{E}[\mathbb{E}[\mathsf{F}\mathbbm{1}_{Y}\vert\mathcal{X}];X]$,
which is indeed equal to $\mathbb{E}[\mathsf{F};X\cap Y]$.
\end{proof}
We conclude this section with a statement concerning decreasing
convergence for martingales indexed by a directed set (it is also true
in its increasing convergence guise
\cite[Proposition~V-1-2]{neveu} but we shall not find use of that
version). In it, and in the remainder of this paper, for a family
$(\mathcal{X}_{t})_{t\in T}$ in $\varLambda$ we set $\land_{t\in T}
\mathcal{X}_{t}:=\cap_{t\in T} \mathcal{X}_{t}$, provided $T$ is
non-empty (similarly, later on, we will use the notation $\lor_{t
\in T}\mathcal{X}_{t}:=\overline{\sigma}(\cup_{t\in T}\mathcal{X}
_{t})$ ($=0_{\varLambda}$ when $T$ is empty)).

%
\begin{lemma}[Decreasing martingale convergence\index{martingale convergence}]
Let $\mathsf{X}\in L^{1}(\mathbb{P})$ and let $(\mathcal{X}_{t})_{t
\in T}$ be a non-empty net in $\varLambda$ indexed by a directed set
$(T,\leq)$ satisfying $\mathcal{X}_{t}\subset\mathcal{X}_{s}$ whenever
$s\leq t$ are from $T$. Then the net $(\mathbb{E}[\mathsf{X}\vert
\mathcal{X}_{t}])_{t\in T}$ converges in $L^{1}(\mathbb{P})$ to
$\mathbb{E}[\mathsf{X}\vert\land_{t\in T}\mathcal{X}_{t}]$.
\end{lemma}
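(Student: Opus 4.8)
The plan is to prove decreasing convergence for a directed index set by reducing it to the classical sequential (or downward-filtering) case, using the directedness to extract a cofinal behaviour, together with uniform integrability of the conditional expectations. Throughout write $\mathcal{X}_\infty := \land_{t\in T}\mathcal{X}_t$ and $\mathsf{X}_t := \mathbb{E}[\mathsf{X}\vert\mathcal{X}_t]$.

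\medskip
First I would record the two standing facts that make the argument go. (i) The family $(\mathsf{X}_t)_{t\in T}$ is uniformly integrable: since each $\mathsf{X}_t$ is a conditional expectation of the single fixed $\mathsf{X}\in L^1(\mathbb{P})$, conditional Jensen gives $\mathbb{E}[|\mathsf{X}_t|\,\mathbbm{1}_A]\le\mathbb{E}[\mathbb{E}[|\mathsf{X}|\,\vert\,\mathcal{X}_t]\mathbbm{1}_A]$ and the standard $L^1$-uniform-integrability estimate for a single integrable function transfers to the whole family. (ii) It suffices to prove the claim for $\mathsf{X}\in L^2(\mathbb{P})$ and then pass to the general $L^1$ case by an $\varepsilon$-approximation, using that the conditional-expectation operators $\mathbb{E}_{\vert\mathcal{X}_t}$ and $\mathbb{E}_{\vert\mathcal{X}_\infty}$ are all $L^1$-contractions, so the convergence statement is stable under $L^1$-limits of $\mathsf{X}$.

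\medskip
In the $L^2$ case I would exploit the Hilbert-space structure. Each $\mathbb{E}_{\vert\mathcal{X}_t}$ is orthogonal projection onto the closed subspace $L^2(\mathbb{P}\vert_{\mathcal{X}_t})\subset L^2(\mathbb{P})$, and as $t$ increases along $(T,\le)$ these subspaces decrease. The decreasing net $\|\mathsf{X}_t\|_{L^2}^2 = \langle \mathsf{X}_t,\mathsf{X}\rangle$ is bounded below, hence converges; directedness then yields that $(\mathsf{X}_t)_{t\in T}$ is Cauchy in $L^2$, because for $s\le t$ one has $\|\mathsf{X}_s-\mathsf{X}_t\|_{L^2}^2 = \|\mathsf{X}_s\|_{L^2}^2-\|\mathsf{X}_t\|_{L^2}^2$ by the nesting of projections. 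Thus $\mathsf{X}_t\to \mathsf{X}_\infty'$ in $L^2$ for some limit, which is automatically $\mathcal{X}_\infty$-measurable. It then remains to identify the limit as $\mathbb{E}[\mathsf{X}\vert\mathcal{X}_\infty]$: for any $A\in\mathcal{X}_\infty\subset\mathcal{X}_t$ one has $\mathbb{E}[\mathsf{X}_t\mathbbm{1}_A]=\mathbb{E}[\mathsf{X}\mathbbm{1}_A]$ for every $t$, and passing to the $L^2$-limit gives $\mathbb{E}[\mathsf{X}_\infty'\mathbbm{1}_A]=\mathbb{E}[\mathsf{X}\mathbbm{1}_A]$, which with $\mathcal{X}_\infty$-measurability characterizes the limit as $\mathbb{E}[\mathsf{X}\vert\mathcal{X}_\infty]$.

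\medskip
The main obstacle is the passage from $L^2$ back to $L^1$ combined with the directed (as opposed to sequential) index. The contraction estimate makes the $L^2\to L^1$ reduction routine, but one must check that the limit object for a general $\mathsf{X}\in L^1$ is indeed $\mathbb{E}[\mathsf{X}\vert\mathcal{X}_\infty]$ and not merely some $\mathcal{X}_\infty$-measurable candidate: here the identification step $\mathbb{E}[\mathsf{X}_t\mathbbm{1}_A]=\mathbb{E}[\mathsf{X}\mathbbm{1}_A]$ for $A\in\mathcal{X}_\infty$ survives the $L^1$-limit and closes the argument. The only genuinely non-sequential point is the Cauchy argument, and there the above identity $\|\mathsf{X}_s-\mathsf{X}_t\|^2=\|\mathsf{X}_s\|^2-\|\mathsf{X}_t\|^2$ (valid precisely because projections onto nested subspaces commute) converts convergence of the scalar net $\|\mathsf{X}_t\|_{L^2}^2$ into the Cauchy property of the vector net, so no appeal to countability or a cofinal sequence is needed.
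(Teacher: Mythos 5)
Your proof is correct, and it reaches the crucial step --- existence of the $L^{1}$-limit of the net --- by a genuinely different route than the paper. The paper outsources that step entirely: it invokes \cite[Lemma~V-1-1]{neveu} together with the classical sequential decreasing martingale convergence theorem \cite[Corollary~V-3-12]{neveu} to conclude that $(\mathbb{E}[\mathsf{X}\vert\mathcal{X}_{t}])_{t\in T}$ converges in $L^{1}(\mathbb{P})$ to some $\mathsf{X}_{\infty}$, and only then argues measurability and identification of the limit. You instead give a self-contained argument: reduce to $\mathsf{X}\in L^{2}(\mathbb{P})$ via the $L^{1}$-contraction property of conditional expectations plus density, and in $L^{2}$ use the Pythagoras identity $\Vert\mathsf{X}_{s}-\mathsf{X}_{t}\Vert^{2}=\Vert\mathsf{X}_{s}\Vert^{2}-\Vert\mathsf{X}_{t}\Vert^{2}$ for $s\leq t$ (valid because $\mathsf{X}_{t}=\mathbb{E}[\mathsf{X}_{s}\vert\mathcal{X}_{t}]$, the projections being nested) to convert convergence of the monotone net of norms into the Cauchy property of the net itself. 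The identification steps are then the same in substance in both proofs: the limit is $\land_{t\in T}\mathcal{X}_{t}$-measurable because for each $t$ it is also the limit of the tail subnet, which lies in the closed subspace associated with $\mathcal{X}_{t}$ (note that your word ``automatically'' hides exactly the point where completeness of the $\mathcal{X}_{t}\in\varLambda$ is used, as in the paper); and the defining property of conditional expectation follows by passing to the limit in $\mathbb{E}[\mathsf{X}_{t};A]=\mathbb{E}[\mathsf{X};A]$ for $A\in\land_{t\in T}\mathcal{X}_{t}$. What your approach buys is self-containedness and a transparent proof that no cofinal sequence or countability assumption is needed; what the paper's approach buys is brevity, at the cost of leaning on Neveu's directed-set machinery. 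One small tidy-up: your standing fact (i), uniform integrability of the family, is never actually used --- the contraction-plus-density reduction replaces it --- so it can simply be deleted.
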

%
%
\begin{remark}
Recall that when $T=\mathbb{N}$ with the usual order, then the
convergence is also almost sure.
\end{remark}
\begin{proof}
According to \cite[Lemma~V-1-1]{neveu} and the usual decreasing
martingale convergence\index{martingale convergence} indexed by $\mathbb{N}$
\cite[Corollary~V-3-12]{neveu} the net $(\mathbb{E}[\mathsf{X}\vert
\mathcal{X}_{t}])_{t\in T}$ is convergent to some $\mathsf{X}_{\infty
}$ in $L^{1}(\mathbb{P})$. Because for each $t\in T$, $L^{1}(
\mathbb{P}\vert_{\mathcal{X}_{t}})$ is closed in $L^{1}(\mathbb
{P})$ and
since $\mathsf{X}_{\infty}$ is also the limit of the net $(
\mathbb{E}[\mathsf{X}\vert\mathcal{X}_{u}])_{u\in T_{\geq t}}$, it
follows that $\mathsf{X}_{\infty}\in\mathcal{X}_{t}/\mathcal{B}_{
\mathbb{R}}$; hence $\mathsf{X}_{\infty}\in(\land_{t\in T}
\mathcal{X}_{t})/\mathcal{B}_{\mathbb{R}}$. Then for any $X\in
\land_{t\in T}\mathcal{X}_{t}$, $\mathbb{E}[\mathsf{X}_{\infty}; X]=
\lim_{t\in T} \mathbb{E}[\mathbb{E}[\mathsf{X}\vert\mathcal{X}_{t}];X]=
\lim_{t\in T} \mathbb{E}[\mathsf{X};X]=\mathbb{E}[\mathsf{X};X]$, which
means that a.s. $\mathsf{X}_{\infty}=\mathbb{E}[\mathsf{X}\vert
\land_{t\in T}\mathcal{X}_{t}]$.
\end{proof}

\section{The arithmetic}%
\label{section:arithmetic}
We begin with some simple observations.

%
\begin{remark}[Lattice structure]
\cite[passim]{tsirelson}. The operations $\land$, $\lor$ in
$\varLambda$ are clearly associative and commutative, and one has the
absorption laws: $(\mathcal{X}\land\mathcal{Y})\lor\mathcal{X}=
\mathcal{X}$ and $(\mathcal{X}\lor\mathcal{Y})\land\mathcal{X}=
\mathcal{X}$ for $\{\mathcal{X},\mathcal{Y}\}\subset\varLambda$. Besides,
$0_{\varLambda}\lor\mathcal{X}=\mathcal{X}$ and $\mathcal{X}\land
\mathcal{M}=\mathcal{X}$ for all $\mathcal{X}\in\varLambda$. Thus
$(\varLambda,\land,\lor)$ is a bounded algebraic lattice with bottom
$0_{\varLambda}$ and top $\mathcal{M}$. However, it is not distributive
in general, as we saw in the introduction. While $+$ is not an internal
operation on $\varLambda$, nevertheless we may assert, for $\{
\mathcal{X},\mathcal{Y},\mathcal{Z}\}\subset\varLambda$, that
$\mathcal{X}+\mathcal{Y}=\mathcal{Y}+\mathcal{X}$, resp. $(
\mathcal{X}+\mathcal{Y})+\mathcal{Z}=\mathcal{X}+(\mathcal{Y}+
\mathcal{Z})$, whenever $\mathcal{X}$ and $\mathcal{Y}$ are independent,
resp. and independent of $\mathcal{Z}$. Clearly also $\mathcal{X}+0_{
\varLambda}=\mathcal{X}$ for $\mathcal{X}\in\varLambda$.
\end{remark}

%
\begin{proposition}[Independence\index{independence} and commutativity]
\label{proposition:basic}%
\textup{\cite[Proposition~3.5]{tsirelson}}. Let $\{\mathcal{X},\break \mathcal{Y}
\}\subset\varLambda$. Then the following are equivalent.
\begin{enumerate}[(i)]%
\item
\label{basic:i}
$\mathcal{X}$ and $\mathcal{Y}$ are independent.
\item
\label{basic:ii}
$\mathcal{X}\land\mathcal{Y}=0_{\varLambda}$ and $\mathcal{X}$ and
$\mathcal{Y}$ ``commute'': $\mathbb{E}_{\vert\mathcal{X}} \mathbb{E}
_{\vert\mathcal{Y}}=\mathbb{E}_{\vert\mathcal{Y}} \mathbb{E}_{
\vert\mathcal{X}}$.
\item
\label{basic:iii}
$\mathbb{E}_{\vert\mathcal{X}} \mathbb{E}_{\vert\mathcal{Y}}=
\mathbb{E}_{\vert0_{\varLambda}}$.
\end{enumerate}
\end{proposition}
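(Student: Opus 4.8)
The plan is to run the short cycle (i) $\Rightarrow$ (iii) $\Rightarrow$ (i) to settle the equivalence of independence with condition (iii) cheaply, and then to prove (iii) $\Leftrightarrow$ (ii), with the implication (ii) $\Rightarrow$ (iii) carrying essentially all of the weight. Throughout I would work on $L^2(\mathbb{P})$ rather than on $L^1(\mathbb{P})$: there $\mathbb{E}_{\vert\mathcal{X}}$ and $\mathbb{E}_{\vert\mathcal{Y}}$ are the orthogonal projections onto the closed subspaces $L^2(\mathbb{P}\vert_{\mathcal{X}})$ and $L^2(\mathbb{P}\vert_{\mathcal{Y}})$, while $\mathbb{E}_{\vert 0_{\varLambda}}$ is the projection onto the constants. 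Since all these operators are $L^1$-continuous and $L^2(\mathbb{P})$ is dense in $L^1(\mathbb{P})$, any operator identity need only be verified on $L^2(\mathbb{P})$ (indeed on bounded random variables) and then transfers back to $L^1(\mathbb{P})$.

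For (i) $\Rightarrow$ (iii): given $\mathsf{M}\in L^1(\mathbb{P})$, the variable $\mathbb{E}[\mathsf{M}\vert\mathcal{Y}]$ is $\mathcal{Y}$-measurable, hence independent of $\mathcal{X}$, so conditioning it on $\mathcal{X}$ returns its mean $\mathbb{E}[\mathsf{M}]$; that is, $\mathbb{E}_{\vert\mathcal{X}}\mathbb{E}_{\vert\mathcal{Y}}=\mathbb{E}_{\vert 0_{\varLambda}}$. For the converse (iii) $\Rightarrow$ (i) I would test the identity on indicators: for $B\in\mathcal{Y}$ one has $\mathbb{E}_{\vert\mathcal{Y}}(\mathbbm{1}_{B})=\mathbbm{1}_{B}$, so (iii) gives $\mathbb{P}[B\vert\mathcal{X}]=\mathbb{E}_{\vert\mathcal{X}}(\mathbbm{1}_{B})=\mathbb{P}(B)$ a.s.; then for $A\in\mathcal{X}$, $\mathbb{P}(A\cap B)=\mathbb{E}[\mathbbm{1}_{A}\,\mathbb{P}[B\vert\mathcal{X}]]=\mathbb{P}(A)\mathbb{P}(B)$, which is independence.

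The passage (iii) $\Rightarrow$ (ii) splits into two short observations. Commutativity comes from self-adjointness: on $L^2(\mathbb{P})$ the adjoint of $\mathbb{E}_{\vert\mathcal{X}}\mathbb{E}_{\vert\mathcal{Y}}$ is $\mathbb{E}_{\vert\mathcal{Y}}\mathbb{E}_{\vert\mathcal{X}}$, whereas $\mathbb{E}_{\vert 0_{\varLambda}}$ is self-adjoint, so (iii) forces $\mathbb{E}_{\vert\mathcal{Y}}\mathbb{E}_{\vert\mathcal{X}}=\mathbb{E}_{\vert 0_{\varLambda}}=\mathbb{E}_{\vert\mathcal{X}}\mathbb{E}_{\vert\mathcal{Y}}$. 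Triviality of $\mathcal{X}\land\mathcal{Y}$ follows by testing (iii) on $\mathbbm{1}_{A}$ with $A\in\mathcal{X}\cap\mathcal{Y}$: such $\mathbbm{1}_{A}$ is fixed by both projections, so $\mathbbm{1}_{A}=\mathbb{E}_{\vert 0_{\varLambda}}(\mathbbm{1}_{A})=\mathbb{P}(A)$ a.s., whence $\mathbb{P}(A)\in\{0,1\}$.

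Finally, (ii) $\Rightarrow$ (iii) is where the real work lies, and I expect it to be the main obstacle. The idea is the Hilbert-space fact that two \emph{commuting} orthogonal projections $P:=\mathbb{E}_{\vert\mathcal{X}}$ and $Q:=\mathbb{E}_{\vert\mathcal{Y}}$ have a product $PQ$ that is again an orthogonal projection (it is visibly idempotent, and self-adjoint precisely because $P$ and $Q$ commute) whose range is the intersection of the ranges, $L^2(\mathbb{P}\vert_{\mathcal{X}})\cap L^2(\mathbb{P}\vert_{\mathcal{Y}})$. The crux is then to identify this intersection with $L^2(\mathbb{P}\vert_{\mathcal{X}\land\mathcal{Y}})$, which by the hypothesis $\mathcal{X}\land\mathcal{Y}=0_{\varLambda}$ is exactly the constants; that yields $PQ=\mathbb{E}_{\vert 0_{\varLambda}}$, i.e. (iii). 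The identification $L^2(\mathbb{P}\vert_{\mathcal{X}})\cap L^2(\mathbb{P}\vert_{\mathcal{Y}})=L^2(\mathbb{P}\vert_{\mathcal{X}\cap\mathcal{Y}})$ is precisely the point at which completeness of the $\sigma$-fields is indispensable: a square-integrable function admitting both an $\mathcal{X}$-measurable and a $\mathcal{Y}$-measurable version has, by completeness of each, all its sublevel sets in $\mathcal{X}$ and in $\mathcal{Y}$ simultaneously, hence in $\mathcal{X}\cap\mathcal{Y}$, so it is $(\mathcal{X}\cap\mathcal{Y})$-measurable. Without completeness one recovers only equality ``mod $\mathbb{P}$'', which is why the standing assumption that the $\sigma$-fields lie in $\varLambda$ cannot be dropped here.
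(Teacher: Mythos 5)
Your proof is correct, and although it pivots on the same underlying fact as the paper's, its execution is genuinely different and more self-contained. The paper runs the single cycle (i) $\Rightarrow$ (ii) $\Rightarrow$ (iii) $\Rightarrow$ (i): its (iii) $\Rightarrow$ (i) is your indicator computation almost verbatim, its (i) $\Rightarrow$ (ii) follows from basic properties of conditional expectation under independence, and its (ii) $\Rightarrow$ (iii) is dispatched in one line by invoking \emph{without proof} the identity $\mathbb{E}_{\vert\mathcal{X}}\mathbb{E}_{\vert\mathcal{Y}}=\mathbb{E}_{\vert\mathcal{Y}}\mathbb{E}_{\vert\mathcal{X}}=\mathbb{E}_{\vert\mathcal{X}\land\mathcal{Y}}$, i.e.\ that commuting conditional expectations compose to the conditional expectation on the meet. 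You instead establish (i) $\Leftrightarrow$ (iii) and (ii) $\Leftrightarrow$ (iii), and your (ii) $\Rightarrow$ (iii) amounts to a proof of the very fact the paper cites: the product of two commuting orthogonal projections on $L^{2}(\mathbb{P})$ is the orthogonal projection onto the intersection of their ranges, combined with the identification $L^{2}(\mathbb{P}\vert_{\mathcal{X}})\cap L^{2}(\mathbb{P}\vert_{\mathcal{Y}})=L^{2}(\mathbb{P}\vert_{\mathcal{X}\land\mathcal{Y}})$; moreover you correctly locate completeness of the members of $\varLambda$ as exactly what makes this identification hold on the nose rather than merely ``mod $\mathbb{P}$'', in line with the paper's general warning on that point. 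Your adjoint argument for commutativity in (iii) $\Rightarrow$ (ii) is likewise a step the paper never needs, since it reaches (ii) directly from (i). In short: the paper's route buys brevity at the cost of outsourcing the crux to a known operator-theoretic fact (attributed to Tsirelson), while yours buys a self-contained Hilbert-space argument that makes explicit where commutativity and where completeness each do their work. Two small polish points, neither a gap: idempotence of $PQ$ is not ``visible'' but itself uses commutativity, $PQPQ=P(QP)Q=P^{2}Q^{2}=PQ$; and in the sublevel-set argument you should fix a single $\mathcal{X}$-measurable version $f_{1}$ and note that each $\{f_{1}\le c\}\in\mathcal{X}$ differs from the corresponding sublevel set of a $\mathcal{Y}$-measurable version by an $\mathcal{M}$-measurable $\mathbb{P}$-null set, hence lies in $\mathcal{Y}$ as well because $\mathcal{Y}\supset 0_{\varLambda}$.
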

%
%
\begin{example}
\label{ex:commute}
Let $\xi_{1}$, $\xi_{2}$ be independent equiprobable signs\index{independent ! equiprobable signs} and
$\mathcal{X}=\overline{\sigma}(\{\xi_{1}=\xi_{2}=1\})$, $\mathcal
{Y}=\overline{
\sigma}(\xi_{1})$. Then $\mathcal{X}$ and $\mathcal{Y}$ are not
independent but $\mathcal{X}\land\mathcal{Y}=0_{\varLambda}$.
\end{example}
\begin{proof}
\ref{basic:ii} implies \ref{basic:iii} because $\mathbb{E}_{\vert
\mathcal{X}}\mathbb{E}_{\vert\mathcal{Y}}=\mathbb{E}_{\vert
\mathcal{Y}} \mathbb{E}_{\vert\mathcal{X}}$ entails that $\mathbb{E}
_{\vert\mathcal{X}} \mathbb{E}_{\vert\mathcal{Y}}=\mathbb{E}_{
\vert\mathcal{Y}} \mathbb{E}_{\vert\mathcal{X}}=\mathbb{E}_{\vert
\mathcal{X}\land\mathcal{Y}}$. Also, if $\mathcal{X}$ and $
\mathcal{Y}$ are independent, then the basic properties of conditional
expectations imply $\mathbb{E}_{\vert\mathcal{X}} \mathbb{E}_{\vert
\mathcal{Y}}=\mathbb{E}_{\vert0_{\varLambda}}=\mathbb{E}_{\vert
\mathcal{Y}} \mathbb{E}_{\vert\mathcal{X}}$, while clearly
$\mathcal{X}\land\mathcal{Y}=0_{\varLambda}$, i.e. \ref{basic:i} implies
\ref{basic:ii}. Suppose now \ref{basic:iii}. Let $X\in\mathcal{X}$ and
$Y\in\mathcal{Y}$. Then $\mathbb{P}(X\cap Y)=\mathbb{E}[\mathbb{P}[Y
\vert\mathcal{X}];X]=\mathbb{E}[\mathbb{E}[\mathbbm{1}_{Y}\vert
\mathcal{Y}\vert\mathcal{X}];X]=\mathbb{E}[\mathbb{P}[Y\vert0_{
\varLambda}];X]=\mathbb{P}(X)\mathbb{P}(Y)$, which is \ref{basic:i}.
\end{proof}
The next few results deal with the distributivity properties\index{distributivity} of the pair
$\lor$-$\land$, when there are strong independence properties.
%
%
\begin{proposition}[Distributivity\index{distributivity} I]
\label{distributivity}
Let $(\mathcal{X}_{\alpha\beta})_{(\alpha,\beta)\in\mathfrak{A}
\times\mathfrak{B}}$ be a family in $\varLambda$, $\mathfrak{A}$
non-empty, such that the $\mathcal{Z}_{\beta}:=
\lor_{\alpha\in\mathfrak{A}}\mathcal{X}_{\alpha\beta}$,
$\beta\in\mathfrak{B}$, are independent. Then
%
%
\begin{equation}
\label{eq:dist} \land_{\alpha\in\mathfrak{A}}\lor_{\beta\in\mathfrak{B}}
\mathcal{X}_{\alpha\beta}=\lor_{\beta\in\mathfrak{B}} \land_{\alpha\in\mathfrak{A}}
\mathcal{X}_{\alpha\beta}.
\end{equation}
\end{proposition}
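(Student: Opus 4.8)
Write $\mathcal{Y}_{\alpha}:=\lor_{\beta\in\mathfrak{B}}\mathcal{X}_{\alpha\beta}$, $\mathcal{W}_{\beta}:=\land_{\alpha\in\mathfrak{A}}\mathcal{X}_{\alpha\beta}$, $\mathcal{R}:=\lor_{\beta\in\mathfrak{B}}\mathcal{W}_{\beta}$ and $\mathcal{Z}:=\lor_{\beta\in\mathfrak{B}}\mathcal{Z}_{\beta}$, so that \eqref{eq:dist} reads $\land_{\alpha}\mathcal{Y}_{\alpha}=\mathcal{R}$. The inclusion $\mathcal{R}\subset\land_{\alpha}\mathcal{Y}_{\alpha}$ holds in any lattice: for fixed $\beta_{0}$ one has $\mathcal{W}_{\beta_{0}}\subset\mathcal{X}_{\alpha\beta_{0}}\subset\mathcal{Y}_{\alpha}$ for every $\alpha$, hence $\mathcal{W}_{\beta_{0}}\subset\land_{\alpha}\mathcal{Y}_{\alpha}$, and one takes $\lor_{\beta_{0}}$. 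All the difficulty is the reverse inclusion $\land_{\alpha}\mathcal{Y}_{\alpha}\subset\mathcal{R}$, and this is precisely where independence must enter, since Example~\ref{example:dist}\ref{example:warning} shows that interchanging $\land_{\alpha}$ and $\lor_{\beta}$ can fail without it. Throughout I use completeness to identify each $\mathcal{F}\in\varLambda$ with the closed subspace of square-integrable $\mathcal{F}$-measurable elements, on which $\mathbb{E}_{\vert\mathcal{F}}$ is the orthogonal projection; it then suffices to prove that every bounded $\land_{\alpha}\mathcal{Y}_{\alpha}$-measurable $\mathsf{W}$ is $\mathcal{R}$-measurable.

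The core is a finite computation. For finite $F\subset\mathfrak{B}$ put $\mathcal{D}_{F}:=\lor_{\beta\notin F}\mathcal{Z}_{\beta}$ and $\mathcal{R}_{F}:=\lor_{\beta\in F}\mathcal{W}_{\beta}$; independence of the $\mathcal{Z}_{\beta}$ gives $\lor_{\beta\in F}\mathcal{Z}_{\beta}\perp\!\!\!\!\perp\mathcal{D}_{F}$ and a \emph{finite} Hilbert tensor factorization of the square-integrable $\mathcal{Z}$-measurable elements as $\bigotimes_{\beta\in F}L^{2}(\mathbb{P}\vert_{\mathcal{Z}_{\beta}})\otimes L^{2}(\mathbb{P}\vert_{\mathcal{D}_{F}})$. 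Under it the projection onto $(\lor_{\beta\in F}\mathcal{X}_{\alpha\beta})\lor\mathcal{D}_{F}$ is $\bigotimes_{\beta\in F}\mathbb{E}_{\vert\mathcal{X}_{\alpha\beta}}\otimes I$. Since $\mathsf{W}$ is $\mathcal{Y}_{\alpha}$-measurable, hence fixed by this projection for every $\alpha$, the subspace identity $\cap_{\alpha}\bigotimes_{\beta}A_{\alpha\beta}=\bigotimes_{\beta}\cap_{\alpha}A_{\alpha\beta}$ for closed subspaces (itself reducible to $A\otimes B=(A\otimes H')\cap(H\otimes B)$ and $\cap_{\alpha}(A_{\alpha}\otimes H')=(\cap_{\alpha}A_{\alpha})\otimes H'$), together with the completeness fact $\cap_{\alpha}L^{2}(\mathbb{P}\vert_{\mathcal{X}_{\alpha\beta}})=L^{2}(\mathbb{P}\vert_{\mathcal{W}_{\beta}})$, places $\mathsf{W}$ in $\bigotimes_{\beta\in F}L^{2}(\mathbb{P}\vert_{\mathcal{W}_{\beta}})\otimes L^{2}(\mathbb{P}\vert_{\mathcal{D}_{F}})=L^{2}(\mathbb{P}\vert_{\mathcal{R}_{F}\lor\mathcal{D}_{F}})$. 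Thus $\land_{\alpha}\mathcal{Y}_{\alpha}\subset\mathcal{R}_{F}\lor\mathcal{D}_{F}$ for every finite $F$.

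Intersecting over $F$ gives $\land_{\alpha}\mathcal{Y}_{\alpha}\subset\land_{F}(\mathcal{R}_{F}\lor\mathcal{D}_{F})$, and it remains to show $\land_{F}(\mathcal{R}_{F}\lor\mathcal{D}_{F})=\mathcal{R}$ (the inclusion $\supset$ is clear, as each $\mathcal{W}_{\beta}$ sits in $\mathcal{R}_{F}\lor\mathcal{D}_{F}$). The net $(\mathcal{R}_{F}\lor\mathcal{D}_{F})_{F}$ decreases as $F\uparrow\mathfrak{B}$, so by the decreasing martingale convergence lemma $\mathbb{E}[\mathsf{U}\vert\mathcal{R}_{F}\lor\mathcal{D}_{F}]\to\mathbb{E}[\mathsf{U}\vert\land_{F}(\mathcal{R}_{F}\lor\mathcal{D}_{F})]$ in $L^{1}$ for each bounded $\mathsf{U}$. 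I evaluate the left side on products $\mathsf{U}=\prod_{\beta\in E}g_{\beta}$, $E$ finite and $g_{\beta}$ bounded $\mathcal{Z}_{\beta}$-measurable, which span a dense set: for $\beta\notin F$ the factor $g_{\beta}$ is $\mathcal{D}_{F}$-measurable and pulls out, while for $\beta\in F$ independent conditioning (Lemma~\ref{lema}) gives $\mathbb{E}[g_{\beta}\vert\mathcal{R}_{F}\lor\mathcal{D}_{F}]=\mathbb{E}[g_{\beta}\vert\mathcal{W}_{\beta}]$, the remainder of the conditioning field being independent of $\mathcal{Z}_{\beta}$; the same lemma lets the factors separate. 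Hence for $F\supset E$ one gets $\mathbb{E}[\mathsf{U}\vert\mathcal{R}_{F}\lor\mathcal{D}_{F}]=\prod_{\beta\in E}\mathbb{E}[g_{\beta}\vert\mathcal{W}_{\beta}]=\mathbb{E}[\mathsf{U}\vert\mathcal{R}]$, so the net is eventually constant and its limit is $\mathbb{E}[\mathsf{U}\vert\mathcal{R}]$. As $\mathsf{U}$ ranges over a dense set and $\mathcal{R}\subset\land_{F}(\mathcal{R}_{F}\lor\mathcal{D}_{F})$, this forces the two $\sigma$-fields to coincide, completing $\land_{\alpha}\mathcal{Y}_{\alpha}\subset\mathcal{R}$.

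The main obstacle is exactly the passage to infinite $\mathfrak{B}$: the finite-$F$ tensor step is clean, but collapsing the ``tail'' $\mathcal{D}_{F}$ to nothing as $F\uparrow\mathfrak{B}$ is the interchange of limits that is \emph{false} in general, so independence of the $\mathcal{Z}_{\beta}$ (a Kolmogorov-type input) must be used decisively, as it is in the product computation above. A secondary, purely technical point is the tensor subspace identity and the pervasive use of completeness to move between closed subspaces of $L^{2}$ and complete $\sigma$-fields. One could instead run the argument in a single infinite tensor product $\bigotimes_{\beta}L^{2}(\mathbb{P}\vert_{\mathcal{Z}_{\beta}})$, but identifying it with $L^{2}(\mathbb{P}\vert_{\mathcal{Z}})$ absent any separability assumption is delicate, which is why I prefer the reduction to finite $F$ followed by martingale convergence.
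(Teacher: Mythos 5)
Your argument is correct, but it takes a genuinely different route from the paper's proof. The paper first reduces \eqref{eq:dist} to two special configurations --- a constant column ($\mathfrak{B}=\{1,2\}$, $\mathcal{X}_{\alpha 2}=\mathcal{Z}_{2}$) and a diagonal one ($\mathfrak{A}=\mathfrak{B}$, $\mathcal{X}_{\alpha\beta}=\mathcal{Z}_{\beta}$ for $\alpha\ne\beta$) --- proves each for two indices by conditional-expectation calculus (the former resting on the Burkholder--Chow theorem on iterates of conditional expectations \cite{burkholder}, i.e.\ alternating projections), and then reaches arbitrary index sets by induction plus the directed-set decreasing martingale lemma. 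You bypass both the case analysis and Burkholder--Chow: for finite $F\subset\mathfrak{B}$, independence of the $\mathcal{Z}_{\beta}$ turns $L^{2}(\mathbb{P}\vert_{\mathcal{Z}})$ into a finite Hilbert tensor product, in which the intersection over all of $\mathfrak{A}$ (arbitrary cardinality, no induction) is computed in one stroke by the closed-subspace identity $\cap_{\alpha}\bigotimes_{\beta}A_{\alpha\beta}=\bigotimes_{\beta}\cap_{\alpha}A_{\alpha\beta}$ --- whose proof via commuting projections is exactly where independence enters --- together with the completeness fact $\cap_{\alpha}L^{2}(\mathbb{P}\vert_{\mathcal{X}_{\alpha\beta}})=L^{2}(\mathbb{P}\vert_{\mathcal{W}_{\beta}})$. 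The infinitary passage $F\uparrow\mathfrak{B}$ is then handled, as in the paper, by the decreasing martingale lemma, with your eventual-constancy computation on product test functions (via Lemma~\ref{lema}) playing the role of the paper's diagonal case; note that the identity $\land_{F}(\mathcal{R}_{F}\lor\mathcal{D}_{F})=\mathcal{R}$ you need there is itself an instance of the proposition, which you rightly prove from scratch rather than assume. What your route buys is conceptual economy over $\mathfrak{A}$ and the replacement of alternating-projection convergence by the cleaner commuting-projection mechanism; what the paper's route buys is that it never leaves elementary $\pi/\lambda$ and conditional-expectation arguments, whereas a fully rigorous write-up of yours must also verify the tensor-factorization bookkeeping (density of product functions in $L^{2}$ of a join of independent $\sigma$-fields, and that the unitary carries $\bigotimes_{\beta\in F}L^{2}(\mathbb{P}\vert_{\mathcal{X}_{\alpha\beta}})\otimes L^{2}(\mathbb{P}\vert_{\mathcal{D}_{F}})$ onto $L^{2}$ of the corresponding join) --- standard facts, but of comparable bulk to the reductions they replace.
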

It is quite agreeable that the preceding statement can be made in such
generality. We give some remarks before turning to its proof.
%
%
\begin{remark}
Of course the independence\index{independence} of $\mathcal{Z}_{\beta}$, $\beta\in
\mathfrak{B}$, is far from being necessary in order for
\eqref{eq:dist} to prevail. For instance if $\{\mathcal{X},
\mathcal{Y},\mathcal{Z}\}\subset\varLambda$, and $\mathcal{Z}\subset
\mathcal{X}$ or $\mathcal{Z}\subset\mathcal{Y}$, then $(\mathcal{X}
\land\mathcal{Z})\lor(\mathcal{Y}\land\mathcal{Z})=\mathcal{Z}=(
\mathcal{X}\lor\mathcal{Y})\land\mathcal{Z}=(\mathcal{X}\lor
\mathcal{Y})\land(\mathcal{Z}\lor\mathcal{Z})$, but $\mathcal{X}
\lor\mathcal{Z}$ and $\mathcal{Y}\lor\mathcal{Z}$ are not independent
unless $\mathcal{Z}=0_{\varLambda}$; similarly if $\mathcal{X}\lor
\mathcal{Y}\subset\mathcal{Z}$, then $(\mathcal{X}\land\mathcal{Y})
\lor(\mathcal{Z}\land\mathcal{Z})=(\mathcal{X}\land\mathcal{Y})
\lor\mathcal{Z}=\mathcal{Z}=(\mathcal{X}\lor\mathcal{Z})\land(
\mathcal{Y}\lor\mathcal{Z})$, but $\mathcal{X}\lor\mathcal{Y}$ and
$\mathcal{Z}$ are not independent unless $\mathcal{X}=\mathcal{Y}=0_{
\varLambda}$.
\end{remark}
%
%
\begin{remark}
The generality of a not necessarily denumerable $\mathfrak{B}$ in
Proposition~\ref{distributivity} is~of only superficial value. Indeed
clearly we \xch{have}{hvae} $\lor_{\beta\in\mathfrak{B}}
\land_{\alpha\in\mathfrak{A}}\mathcal{X}_{\alpha\beta}=\break
\cup_{B\text{ countable }\subset\mathfrak{B}}\lor_{\beta\in B}
\land_{\alpha\in\mathfrak{A}}\mathcal{X}_{\alpha\beta}$; similarly
if $A\in\land_{\alpha\in\mathfrak{A}}\lor_{\beta\in\mathfrak{B}}
\mathcal{X}_{\alpha\beta}$, then for sure $A\in\lor_{\beta\in B}
\mathcal{Z}_{\beta}$ for some denumerable $B\subset\mathfrak{B}$ so
that, by the very statement of this proposition (with $\mathfrak{B}$ a
two-point set), $A\in\land_{\alpha\in\mathfrak{A}}\lor_{\beta
\in B}\mathcal{X}_{\alpha\beta}$, viz. $\land_{\alpha\in
\mathfrak{A}}\lor_{\beta\in\mathfrak{B}}\mathcal{X}_{\alpha\beta}=
\cup_{B\text{ countable }\subset\mathfrak{B}}
\land_{\alpha\in\mathfrak{A}}\lor_{\beta\in B}\mathcal
{X}_{\alpha
\beta}$.
\end{remark}

%
\begin{remark}
Proposition~\ref{distributivity} yields at once Kolmogorov's zero-one
law: if $\mathcal{A}=(\mathcal{A}_{\gamma})_{\gamma\in\varGamma}$ is
an independency (i.e. a family consisting of independent $\sigma$-fields) from $\varLambda$, independent from a $\mathcal{B}
\in\varLambda$ then, setting for cofinite $A\subset\varGamma$,
$\lor_{A} \mathcal{A}:=\lor_{\gamma\in A}\mathcal{A}_{\gamma}$, one
obtains $\land_{A\text{ cofinite in }\varGamma}(\mathcal{B}\lor(\lor
_{A}\mathcal{A}))=\mathcal{B}$.
\end{remark}
\begin{proof}
The inclusion $\supset$ in \eqref{eq:dist} is trivial. On the other
hand, for $\beta\in\mathfrak{B}$, $\land_{\alpha\in\mathfrak{A}}
\lor_{\beta'\in\mathfrak{B}}\mathcal{X}_{\alpha\beta'}\,{\subset}\,
\land_{\alpha\in\mathfrak{A}}(\mathcal{X}_{\alpha\beta}\lor(
\lor_{\beta'\in\mathfrak{B}\backslash\{\beta\}}\mathcal{Z}_{
\beta'})) $. Hence $\land_{\alpha\in\mathfrak{A}}
\lor_{\beta'\in\mathfrak{B}}\mathcal{X}_{\alpha\beta'}\,{\subset}\,
\land_{\beta\in\mathfrak{B}} (\land_{\alpha\in\mathfrak{A}}(
\mathcal{X}_{\alpha\beta}\lor(
\lor_{\beta'\in\mathfrak{B}\backslash\{\beta\}}\mathcal{Z}_{
\beta'})) )$, and thus it will suffice to prove
\eqref{eq:dist} for the following two special cases.
\begin{enumerate}[(a)]%
\item
\label{dist:a}
$\mathfrak{B}=\{1,2\}$, $\mathcal{X}_{\alpha2}=\mathcal{Z}_{2}$ for
$ \alpha\in\mathfrak{A}$.
\item
\label{dist:b}
$\mathfrak{A}=\mathfrak{B}$ and $\mathcal{X}_{\alpha\beta}=
\mathcal{Z}_{\beta}$ for $\alpha\ne\beta$ from $\mathfrak{A}$.
\end{enumerate}
In proving this we will use without special mention the completeness of
the members of $\varLambda$.

\ref{dist:a}. Relabel $\mathcal{X}_{\alpha1}=:\mathcal{X}_{\alpha}$,
$\alpha\in\mathfrak{A}$, and $\mathcal{Z}_{2}=:\mathcal{Y}$. Suppose
\eqref{eq:dist} has been established for $\mathfrak{A}$ finite (all the
time assuming \ref{dist:a}). Let $T$ consist of the finite non-empty
subsets of $\mathfrak{A}$, direct $T$ by inclusion $ \subset$, and
define $\underline{\mathcal{X}}_{A}:=\land_{\alpha\in A}\mathcal{X}
_{\alpha}$ for $A\in T$. Then $\land_{\alpha\in\mathfrak{A}}(
\mathcal{X}_{\alpha}\lor\mathcal{Y})=\land_{A\in T}( \underline{
\mathcal{X}}_{A}\lor\mathcal{Y})$ and (of course) $
\land_{\alpha\in\mathfrak{A}}\mathcal{X}_{\alpha}=\land_{A\in
T}\underline{
\mathcal{X}}_{A}$. Let $X\in\lor_{\alpha\in\mathfrak{A}}
\mathcal{X}_{\alpha}=:\mathcal{X}$ and $Y\in\mathcal{Y}$. Using
$\mathcal{X}\perp\!\!\!\!\perp\mathcal{Y}$ and decreasing martingale
convergence\index{martingale convergence} we see that a.s. $\mathbb{P}[X\cap Y\vert(\land_{A\in T}
\underline{\mathcal{X}}_{A})\lor\mathcal{Y}]=\mathbb{P}[X \vert
\land_{A\in T}\underline{\mathcal{X}}_{A}]\mathbb{P}[Y\vert
\mathcal{Y}]=(\lim_{A\in T}\mathbb{P}[X\vert\underline{\mathcal{X}}
_{A}])\mathbb{P}[Y\vert\mathcal{Y}]=\lim_{A\in T}(\mathbb
{P}[X\vert
\underline{\mathcal{X}}_{A}]\mathbb{P}[Y\vert\mathcal{Y}])=\lim_{A
\in T}\mathbb{P}[X\cap Y\vert\underline{\mathcal{X}}_{A}\lor
\mathcal{Y}]=\mathbb{P}[X\cap Y\vert\land_{A\in T}( \underline{
\mathcal{X}}_{A}\lor\mathcal{Y})]$, where the limits are in
$L^{1}(\mathbb{P})$. A $\pi/\lambda$-argument allows to conclude that
\eqref{eq:dist} holds true. Suppose now $\mathfrak{A}$ is finite. By
induction we may and do consider only the case $\mathfrak{A}=\{1,2\}$,
and so we are to show that $(\mathcal{X}_{1}\lor\mathcal{Y})\land(
\mathcal{X}_{2}\lor\mathcal{Y})=(\mathcal{X}_{1}\land\mathcal{X}
_{2})\lor\mathcal{Y}$. Let again $X\in\mathcal{X}$ and $Y\in
\mathcal{Y}$. Then using $\mathcal{X}\perp\!\!\!\!\perp\mathcal{Y}$,
convergence of iterated conditional expectations
\cite[Proposition~3]{burkholder} and bounded convergence, we obtain that
a.s. $\mathbb{P}[X\cap Y\vert(\mathcal{X}_{1}\lor\mathcal{Y})\land
(\mathcal{X}_{2}\lor\mathcal{Y})]=\mathbb{E}[\mathbbm{1}_{X\cap Y}
\vert\mathcal{X}_{1}\lor\mathcal{Y}\vert(\mathcal{X}_{1}\lor
\mathcal{Y})\land(\mathcal{X}_{2}\lor\mathcal{Y})]=\mathbb{E}[
\mathbb{P}[X\vert\mathcal{X}_{1}]\mathbbm{1}_{Y}\vert(\mathcal{X}
_{1}\lor\mathcal{Y})\land(\mathcal{X}_{2}\lor\mathcal{Y})]=
\mathbb{E}[\mathbb{P}[X\vert\mathcal{X}_{1}]\mathbbm{1}_{Y}\vert
\mathcal{X}_{2}\lor\mathcal{Y}\vert(\mathcal{X}_{1}\lor\mathcal{Y})
\land(\mathcal{X}_{2}\lor\mathcal{Y})]=\mathbb{E}[\mathbb{E}[
\mathbbm{1}_{X}\vert\mathcal{X}_{1}\vert\mathcal{X}_{2}]\mathbbm{1}
_{Y}\vert(\mathcal{X}_{1}\lor\mathcal{Y})\land(\mathcal{X}_{2}
\lor\mathcal{Y})]=\mathbb{E}[\mathbb{E}[\mathbbm{1}_{X}\vert
\mathcal{X}_{1}\vert\mathcal{X}_{2}\vert\mathcal{X}_{1}\vert
\mathcal{X}_{2}]\mathbbm{1}_{Y}\vert(\mathcal{X}_{1}\lor\mathcal{Y})
\land(\mathcal{X}_{2}\lor\mathcal{Y})]=\cdots\to\mathbb{E}[
\mathbb{P}[X\vert\mathcal{X}_{1}\land\mathcal{X}_{2}]\mathbbm{1}
_{Y}\vert(\mathcal{X}_{1}\lor\mathcal{Y})\land(\mathcal{X}_{2}
\lor\mathcal{Y})]=\mathbb{P}[X\vert\mathcal{X}_{1}\land\mathcal{X}
_{2}]\mathbbm{1}_{Y}\in((\mathcal{X}_{1}\land\mathcal{X}_{2})\lor
\mathcal{Y})/\mathcal{B}_{[-\infty,\infty]}$. Again a $\pi/\lambda
$-argument allows to conclude.

\ref{dist:b}. Relabel $\mathcal{X}_{\alpha\alpha}=:\mathcal{X}_{
\alpha}$ and $\mathcal{Z}_{\alpha}=:\mathcal{A}_{\alpha}$,
$\alpha\in\mathfrak{A}$. Suppose \eqref{eq:dist} has been shown for
$\mathfrak{A}$ finite (all the time assuming \ref{dist:b}). Let $T$
consist of the finite subsets of $\mathfrak{A}$, direct $T$ by inclusion
$\subset$, and define $\overline{\mathcal{X}}_{A}:=\lor_{\alpha
\in A}\mathcal{X}_{\alpha}$ for $A\in T$. Then $
\land_{\alpha\in\mathfrak{A}}(\mathcal{X}_{\alpha}\lor(
\lor_{\alpha'\in\mathfrak{A}\backslash\{\alpha\}}\mathcal{A}_{
\alpha'}))=\land_{A\in T}(\overline{\mathcal{X}}_{A}\lor(
\lor_{\alpha'\in\mathfrak{A}\backslash A}\mathcal{A}_{\alpha'}))$.
Now let $B\in T\backslash\{\emptyset\}$, $A_{i}\in\mathcal{A}_{i}$
for $i\in B$. We have by decreasing martingale convergence,\index{martingale convergence} a.s.
$\mathbb{P}[\cap_{i\in B}A_{i}\vert\land_{A\in T}(\overline{
\mathcal{X}}_{A}\lor(\lor_{\alpha'\in\mathfrak{A}\backslash A}
\mathcal{A}_{\alpha'}))]=\lim_{A\in T} \mathbb{P}[\cap_{i\in B}A_{i}
\vert\overline{\mathcal{X}}_{A}\lor(
\lor_{\alpha'\in\mathfrak{A}\backslash A}\mathcal{A}_{\alpha'})]=
\mathbb{P}[\cap_{i\in B}A_{i}\vert\overline{\mathcal{X}}_{B}]\in(
\lor_{\alpha\in\mathfrak{A}}\mathcal{X}_{\alpha})/\mathcal{B}_{[-
\infty,\infty]}$, where the limit is in $L^{1}(\mathbb{P})$, and we
conclude that \eqref{eq:dist} holds true via a $\pi/\lambda$-argument.
So it remains to argue \eqref{eq:dist} for $\mathfrak{A}$ finite, and
then by an inductive\vadjust{\goodbreak} argument for $\mathfrak{A}=\{1,2\}$, in which case
we are to establish that $(\mathcal{X}_{1}\lor\mathcal{A}_{2})\land
(\mathcal{A}_{1}\lor\mathcal{X}_{2})=\mathcal{X}_{1}\land
\mathcal{X}_{2}$. To this end let $F\in(\mathcal{X}_{1}\lor
\mathcal{A}_{2})\land(\mathcal{A}_{1}\lor\mathcal{X}_{2})$. Then a.s.
$\mathbbm{1}_{F}=\mathbb{P}[F\vert\mathcal{X}_{1}\lor\mathcal{A}
_{2}]$ (because $F\in\mathcal{X}_{1}\lor\mathcal{A}_{2}$), which is
$\in(\mathcal{X}_{1}\lor\mathcal{X}_{2})/\mathcal{B}_{[-\infty,
\infty]}$ (because $F\in\mathcal{A}_{1}\lor\mathcal{X}_{2}$, by a
$\pi/\lambda$-argument, using $\mathcal{X}_{1}\subset\mathcal{A}
_{1}$, $\mathcal{X}_{2}\subset\mathcal{A}_{2}$ and $\mathcal{A}_{2}
\perp\!\!\!\!\perp\mathcal{A}_{1}$: if $A_{1}\in\mathcal{A}_{1}$ and
$X_{2}\in\mathcal{X}_{2}$ then a.s. $\mathbb{P}[A_{1}\cap X_{2}
\vert\mathcal{X}_{1}\lor\mathcal{A}_{2}]=\mathbbm{1}_{X_{2}}
\mathbb{P}[A_{1}\vert\mathcal{X}_{1}]\in(\mathcal{X}_{1}\lor
\mathcal{X}_{2})/\mathcal{B}_{[-\infty,\infty]}$).
\end{proof}

%
\begin{corollary}[Distributivity II\index{distributivity}]
\label{lemma}
\leavevmode
\begin{enumerate}[(i)]%
\item
\label{lemma::ii}
If $\mathcal{Y}\in\varLambda$ is independent of a nonincreasing sequence
$(\mathcal{X}_{n})_{n\in\mathbb{N}}$ from $\varLambda$, then
$\land_{n\in\mathbb{N}}(\mathcal{X}_{n}\lor\mathcal{Y})=(
\land_{n\in\mathbb{N}}\mathcal{X}_{n})\lor\mathcal{Y}$.
\cite[Exercise~2.5(1-2)]{chaumont-yor},
\xch{\cite[Exercise~2.15]{revuz-yor}.}{\cite[Exercise~2.15]{revuz-yor}}
\item
\label{lemma::iv}
For $\{\mathcal{X}_{1},\mathcal{X}_{2},\mathcal{Y}_{1},\mathcal{Y}
_{2}\}\subset\varLambda$, if $\mathcal{X}_{1}\lor\mathcal{X}_{2}
\perp\!\!\!\!\perp\mathcal{Y}_{1}\lor\mathcal{Y}_{2}$, then
$(\mathcal{X}_{1}\lor\mathcal{Y}_{1})\land(\mathcal{X}_{2}\lor
\mathcal{Y}_{2})=(\mathcal{X}_{1}\land\mathcal{X}_{2})\lor(
\mathcal{Y}_{1}\land\mathcal{Y}_{2})$. \cite[Fact~2.18, when
$\mathcal{M}$ is countably generated up to negligible sets]{tsirelson}.
In particular for $\{\mathcal{X},\mathcal{A},\mathcal{Y}\}\subset
\varLambda$, if $\mathcal{X}\subset\mathcal{A}\perp\!\!\!\!\perp
\mathcal{Y}$, then $(\mathcal{X}\lor\mathcal{Y})\land\mathcal{A}=
\mathcal{X}$.
\item
\label{lemma:i}
If $\{\mathcal{X},\mathcal{Y},\mathcal{Z}\}\subset\varLambda$,
$\mathcal{X}\lor\mathcal{Y}\perp\!\!\!\!\perp\mathcal{Z}$, then
$(\mathcal{X}\lor\mathcal{Z})\land(\mathcal{Y}\lor\mathcal{Z})=(
\mathcal{X}\land\mathcal{Y})\lor\mathcal{Z}$. \qed
\end{enumerate}
\end{corollary}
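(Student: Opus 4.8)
The plan is to obtain all three items as direct specializations of Proposition~\ref{distributivity}, choosing in each case a suitable pair of index sets $\mathfrak{A}$, $\mathfrak{B}$ and a suitable family $(\mathcal{X}_{\alpha\beta})$; the only thing one must verify in every instance is that the associated $\mathcal{Z}_{\beta}=\lor_{\alpha\in\mathfrak{A}}\mathcal{X}_{\alpha\beta}$ are independent, after which \eqref{eq:dist} reads off the asserted equality verbatim.

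For \ref{lemma::ii} I would take $\mathfrak{A}=\mathbb{N}$ and $\mathfrak{B}=\{1,2\}$, and set $\mathcal{X}_{n1}:=\mathcal{X}_{n}$, $\mathcal{X}_{n2}:=\mathcal{Y}$. Since the sequence is nonincreasing, $\mathcal{Z}_{1}=\lor_{n\in\mathbb{N}}\mathcal{X}_{n}=\mathcal{X}_{1}$, while $\mathcal{Z}_{2}=\mathcal{Y}$; the independence of $\mathcal{Y}$ from the sequence is exactly $\mathcal{Z}_{1}\perp\!\!\!\!\perp\mathcal{Z}_{2}$. Then the left-hand side of \eqref{eq:dist} is $\land_{n}(\mathcal{X}_{n}\lor\mathcal{Y})$ and the right-hand side is $(\land_{n}\mathcal{X}_{n})\lor\mathcal{Y}$ (the $\beta=2$ term contributing $\land_{n}\mathcal{Y}=\mathcal{Y}$), which is the claim.

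For \ref{lemma::iv} I would take $\mathfrak{A}=\mathfrak{B}=\{1,2\}$ and set $\mathcal{X}_{\alpha1}:=\mathcal{X}_{\alpha}$, $\mathcal{X}_{\alpha2}:=\mathcal{Y}_{\alpha}$, so that $\mathcal{Z}_{1}=\mathcal{X}_{1}\lor\mathcal{X}_{2}$ and $\mathcal{Z}_{2}=\mathcal{Y}_{1}\lor\mathcal{Y}_{2}$ are independent precisely by hypothesis; \eqref{eq:dist} then delivers $(\mathcal{X}_{1}\lor\mathcal{Y}_{1})\land(\mathcal{X}_{2}\lor\mathcal{Y}_{2})=(\mathcal{X}_{1}\land\mathcal{X}_{2})\lor(\mathcal{Y}_{1}\land\mathcal{Y}_{2})$. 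The displayed special case is recovered by putting $\mathcal{X}_{1}=\mathcal{X}$, $\mathcal{X}_{2}=\mathcal{A}$, $\mathcal{Y}_{1}=\mathcal{Y}$, $\mathcal{Y}_{2}=0_{\varLambda}$: using $\mathcal{X}\subset\mathcal{A}$ one has $\mathcal{X}_{1}\lor\mathcal{X}_{2}=\mathcal{A}\perp\!\!\!\!\perp\mathcal{Y}=\mathcal{Y}_{1}\lor\mathcal{Y}_{2}$, and the equality becomes $(\mathcal{X}\lor\mathcal{Y})\land\mathcal{A}=(\mathcal{X}\land\mathcal{A})\lor(\mathcal{Y}\land 0_{\varLambda})=\mathcal{X}$.

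Finally \ref{lemma:i} follows from \ref{lemma::iv} upon choosing $\mathcal{X}_{1}=\mathcal{X}$, $\mathcal{X}_{2}=\mathcal{Y}$, $\mathcal{Y}_{1}=\mathcal{Y}_{2}=\mathcal{Z}$ (equivalently, directly from Proposition~\ref{distributivity} with $\mathcal{Z}_{1}=\mathcal{X}\lor\mathcal{Y}$ and $\mathcal{Z}_{2}=\mathcal{Z}$, which are independent by assumption): the left-hand side is then $(\mathcal{X}\lor\mathcal{Z})\land(\mathcal{Y}\lor\mathcal{Z})$ and the right-hand side is $(\mathcal{X}\land\mathcal{Y})\lor(\mathcal{Z}\land\mathcal{Z})=(\mathcal{X}\land\mathcal{Y})\lor\mathcal{Z}$. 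I do not expect any genuine obstacle here: the entire content is carried by Proposition~\ref{distributivity}, and the remaining work is purely the bookkeeping of relabeling indices and checking, in each case, that the hypothesis supplies exactly the independence of the two $\mathcal{Z}_{\beta}$'s; the one spot deserving a line of care is the collapse $\lor_{n}\mathcal{X}_{n}=\mathcal{X}_{1}$ in \ref{lemma::ii}, which rests on the monotonicity of the sequence.
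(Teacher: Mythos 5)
Your proposal is correct and matches the paper's (implicit) argument exactly: the paper states this corollary with a terminal \qed and no written proof, precisely because each item is an immediate specialization of Proposition~\ref{distributivity}, via the same index choices you make. Your verifications of the independence hypotheses (including the collapse $\lor_{n}\mathcal{X}_{n}=\mathcal{X}_{1}$ in \ref{lemma::ii} and the use of $\mathcal{X}\subset\mathcal{A}$ in the special case of \ref{lemma::iv}) are the only details that need checking, and you check them correctly.
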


%
%
\begin{remark}
\cite{weiz} discusses the equality in \ref{lemma:ii} when
$\mathcal{X}$ and $\mathcal{Y}$ are not necessarily independent; we have
seen in Example~\ref{example:dist}\ref{example:warning} that it fails
in general.
\end{remark}
%
%
\begin{remark}
In \ref{lemma:i} the equality $(\mathcal{X}\land\mathcal{Z})\lor(
\mathcal{Y}\land\mathcal{Z})= (\mathcal{X}\lor\mathcal{Y})\land
\mathcal{Z}$ is trivial (both sides are equal to $0_{\varLambda}$).
Example~\ref{example:dist}\ref{dist:basic} showed that these basic
distributivity relations fail in general, even when  $\mathcal{X},
\mathcal{Y},\mathcal{Z}$ are pairwise independent.\index{pairwise independent}
\end{remark}
%
%
\begin{remark}
Let $\{\mathcal{A},\mathcal{B},\mathcal{C}\}\subset\varLambda$. (I) If
$\mathcal{A}\subset\mathcal{B}\lor\mathcal{C}$ and $\mathcal{A}
\lor\mathcal{B}\perp\!\!\!\!\perp\mathcal{C}$, then $\mathcal{A}
\subset\mathcal{B}$: $\mathcal{A}=\mathcal{A}\land(\mathcal{B}
\lor\mathcal{C})= (\mathcal{A}\lor0_{\varLambda})\land(\mathcal{B}
\lor\mathcal{C})=\mathcal{A}\land\mathcal{B}$ by \ref{lemma::iv},
\cite[Exercise~2.2(1)]{chaumont-yor}. (II) If $\mathcal{A}\subset
\mathcal{B}\lor\mathcal{C}$, $\mathcal{A}\perp\!\!\!\!\perp
\mathcal{C}$, $\mathcal{B}\subset\mathcal{A}$, then $\mathcal{A}=
\mathcal{B}$: $\mathcal{A}\subset(\mathcal{B}\lor\mathcal
{C})\land
(\mathcal{A}\lor0_{\varLambda})=\mathcal{B}$ by \ref{lemma::iv} again,
\cite[Exercise~2.2(3)]{chaumont-yor}.
\end{remark}

We turn now to complements;\index{complements} we shall resume with the investigation of
distributivity\index{distributivity} later on in
Nos.~\ref{proposition:distIII}-\ref{corollary:VI}.

%
\begin{proposition}[Complements\index{complements} I]
\label{proposition:complements}%
\textup{\cite[Proposition~4]{emery-sch}}. Let $\{\mathcal{X},\mathcal{Y}\}
\subset\varLambda$. Assume $\mathcal{X}$ is countably generated up to
negligible sets and $\mathcal{Y}\subset\mathcal{X}$. Then the following
statements are equiveridical.
\begin{enumerate}[(i)]%
\item
\label{compl:i'}
Whenever $\mathsf{X}\in\mathcal{X}/\mathcal{B}_{\mathbb{R}}$ is such
that $\mathcal{X}=\overline{\sigma}(\mathsf{X})$, then for every
$\mathsf{Y}\in\mathcal{Y}/\mathcal{B}_{\mathbb{R}}$, $\mathbb{P}(
\mathsf{X}=\mathsf{Y})=0$.
\item
\label{compl:i}
There exists $\mathsf{X}\in\mathcal{X}/\mathcal{B}_{\mathbb{R}}$ such
that for every $\mathsf{Y}\in\mathcal{Y}/\mathcal{B}_{\mathbb{R}}$,
$\mathbb{P}(\mathsf{X}=\mathsf{Y})=0$.
\item
\label{compl:ii}
There exists $\mathsf{Z}\in\mathcal{X}/\mathcal{B}_{\mathbb{R}}$
independent of $\mathcal{Y}$ and having a diffuse law.\index{diffuse}
\item
\label{compl:iii}
There exists $\mathsf{Z}\in\mathcal{X}/\mathcal{B}_{[0,1]}$ independent
of $\mathcal{Y}$ with uniform law such that $\mathcal{Y}+\overline{
\sigma}(\mathsf{Z})=\mathcal{X}$.
\item
\label{compl:iv}
Every $\mathsf{Z}\in\mathcal{X}/\mathcal{B}_{\mathbb{R}}$ for which
$\mathcal{Y}\lor\overline{\sigma}(\mathsf{Z})=\mathcal{X}$ has a
diffuse law.\index{diffuse}
\end{enumerate}
\end{proposition}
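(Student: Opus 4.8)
The plan is to funnel all five statements through a single property of a regular conditional distribution and then read each condition off from it. Fix a generator $\mathsf{X}\in\mathcal{X}/\mathcal{B}_{\mathbb{R}}$ with $\overline{\sigma}(\mathsf{X})=\mathcal{X}$ (one exists since $\mathcal{X}$ is essentially separable), and let $\mu(\cdot,\mathrm{d}x)$ be a regular conditional distribution of $\mathsf{X}$ given $\mathcal{Y}$, which is available because $\mathsf{X}$ takes values in the standard Borel space $\mathbb{R}$. I would isolate the property $(\star)$: $\mu(\omega,\cdot)$ is diffuse for $\mathbb{P}$-almost every $\omega$. Alongside it I would record one reusable observation: if $\mathsf{W}\in\mathcal{X}/\mathcal{B}_{\mathbb{R}}$, then $\mathsf{W}=h(\mathsf{X})$ a.s. for some Borel $h$ (Doob--Dynkin, using completeness), so the conditional law of $\mathsf{W}$ given $\mathcal{Y}$ is the image $h_{*}\mu(\omega,\cdot)$; since any atom of $\mu(\omega,\cdot)$ produces an atom of $h_{*}\mu(\omega,\cdot)$, \emph{diffuseness passes from any coarsening back to the finer variable}, and in particular $(\star)$ does not depend on which generator is chosen.

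With this in hand I would prove $(i)\Leftrightarrow(ii)\Leftrightarrow(\star)$. The implication $(\star)\Rightarrow(i)$ is a one-line conditioning computation: for a generator $\mathsf{X}'$ and $\mathsf{Y}\in\mathcal{Y}/\mathcal{B}_{\mathbb{R}}$, $\mathbb{P}(\mathsf{X}'=\mathsf{Y})=\mathbb{E}[\mu'(\cdot,\{\mathsf{Y}\})]=0$, the conditional law $\mu'$ of $\mathsf{X}'$ being a.s. diffuse and $\mathsf{Y}$ being frozen under the conditioning. Since $(i)\Rightarrow(ii)$ is immediate (apply $(i)$ to any generator), the substance is $(ii)\Rightarrow(\star)$: starting from the witness $\mathsf{X}$ of $(ii)$ I would show that its own conditional law is a.s. diffuse --- for if not, the $\mathcal{Y}$-measurable maximal-atom mass is positive on a non-null set and one can select a $\mathcal{Y}$-measurable $\mathsf{Y}$ sitting on an atom, forcing $\mathbb{P}(\mathsf{X}=\mathsf{Y})>0$ against $(ii)$ --- and then the observation above upgrades this to diffuseness of a generator's conditional law, i.e. $(\star)$. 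This measurable selection of a maximal atom of a kernel is where I expect the real work to be; everything else is soft.

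Granting $(\star)$, I would manufacture the uniform complement for $(iv)$ by the conditional probability integral transform. Put $F(\omega,t):=\mu(\omega,(-\infty,t])$ and $\mathsf{Z}:=F(\omega,\mathsf{X}(\omega))$, which is $\mathcal{X}$-measurable and $[0,1]$-valued. Because $\mu(\omega,\cdot)$ is diffuse, conditionally on $\mathcal{Y}$ the variable $\mathsf{Z}$ is uniform on $[0,1]$ with a law not depending on $\omega$, so $\mathsf{Z}$ is uniform and independent of $\mathcal{Y}$; and $\mathsf{X}$ is recovered a.s. as $\mathsf{X}=Q(\omega,\mathsf{Z})$ through the conditional quantile $Q$, whence $\mathcal{X}=\overline{\sigma}(\mathsf{X})\subseteq\mathcal{Y}\lor\overline{\sigma}(\mathsf{Z})\subseteq\mathcal{X}$ and, with the independence, $\mathcal{Y}+\overline{\sigma}(\mathsf{Z})=\mathcal{X}$. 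The remaining links are routine: $(iv)\Rightarrow(iii)$ since the uniform law is diffuse, and $(iii)\Rightarrow(ii)$ because for $\mathsf{Z}$ diffuse and independent of $\mathcal{Y}$ one has $\mathbb{P}(\mathsf{Z}=\mathsf{Y})=\int_{\mathbb{R}}\mathbb{P}(\mathsf{Z}=y)\,\mathbb{P}(\mathsf{Y}\in\mathrm{d}y)=0$.

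Finally I would fold in $(v)$ via $(\star)\Rightarrow(v)\Rightarrow(i)$. For $(\star)\Rightarrow(v)$, take any $\mathsf{Z}$ with $\mathcal{Y}\lor\overline{\sigma}(\mathsf{Z})=\mathcal{X}$ and a real generator $\mathsf{Y}_{0}$ of $\mathcal{Y}$ (which exists, $\mathcal{Y}\subset\mathcal{X}$ being essentially separable); then $\mathsf{W}:=\iota(\mathsf{Y}_{0},\mathsf{Z})$, for a Borel isomorphism $\iota\colon\mathbb{R}^{2}\to\mathbb{R}$, is a generator of $\mathcal{X}$, so by $(\star)$ its conditional law is a.s. diffuse; since, given $\mathcal{Y}$, the variable $\mathsf{Z}$ is an injective function of $\mathsf{W}$ (as $\mathsf{Y}_{0}$ is then frozen), the conditional law of $\mathsf{Z}$ is a.s. diffuse too, and so $\mathbb{P}(\mathsf{Z}=z)=0$ for every $z$, i.e. $\mathsf{Z}$ has diffuse law. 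For $(v)\Rightarrow(i)$ I argue contrapositively: if $\mathbb{P}(\mathsf{X}=\mathsf{Y})>0$ for some generator $\mathsf{X}$ and some $\mathsf{Y}\in\mathcal{Y}/\mathcal{B}_{\mathbb{R}}$, set $A:=\{\mathsf{X}=\mathsf{Y}\}$ and $\mathsf{Z}:=2\cdot\mathbbm{1}_{A}+\phi(\mathsf{X})\cdot\mathbbm{1}_{A^{c}}$ with $\phi\colon\mathbb{R}\to(0,1)$ a Borel injection; then $\mathsf{Z}$ is $\mathcal{X}$-measurable, $\mathsf{X}$ is recoverable from $\mathsf{Z}$ and $\mathsf{Y}$ (it equals $\mathsf{Y}$ on $\{\mathsf{Z}=2\}$ and $\phi^{-1}(\mathsf{Z})$ off it), so $\mathcal{Y}\lor\overline{\sigma}(\mathsf{Z})=\mathcal{X}$, yet $\mathsf{Z}$ has an atom at $2$ of mass $\mathbb{P}(A)>0$, contradicting $(v)$. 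Chaining the implications shows that $(i)$--$(v)$ are all equivalent to $(\star)$, hence to one another.
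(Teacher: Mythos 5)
Your proposal is correct, and its engine is the same as the paper's: the uniform complement in (iv) is manufactured by the conditional probability integral transform $\mathsf{Z}=\mu(\cdot,(-\infty,\mathsf{X}])$ with $\mathsf{X}$ recovered through the conditional quantile (this is precisely the paper's construction $\mathsf{Z}=\nu_{\mathsf{Y}}((-\infty,\mathsf{X}'])$ in the proof of (ii) $\Rightarrow$ (iv)); the links (iv) $\Rightarrow$ (iii) $\Rightarrow$ (ii) (trivial, then Tonelli) and the modify-the-generator-on-the-coincidence-set trick for (v) $\Rightarrow$ (i) also coincide with the paper's. The differences are organizational and local. First, you funnel all five statements through the pivot $(\star)$ (a.s.\ diffuseness of the conditional law of a generator given $\mathcal{Y}$), which the paper never isolates: it instead proves a cycle and derives the analogous fact $(\star\star)$ only inside the proof of (ii) $\Rightarrow$ (iv). Second, your argument for $(\star)\Rightarrow$ (v) (Borel isomorphism $\iota(\mathsf{Y}_{0},\mathsf{Z})$, then conditional injectivity) is more roundabout than the paper's direct proof of (ii) $\Rightarrow$ (v): by Doob--Dynkin $\mathsf{X}=f(\mathsf{Y}_{0},\mathsf{Z})$ a.s., whence $\mathbb{P}(\mathsf{Z}=z_{0})\le\mathbb{P}(\mathsf{X}=f(\mathsf{Y}_{0},z_{0}))=0$ for every $z_{0}$, with no conditional laws needed. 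Third, the measurable selection of an atom of the kernel, which you rightly flag as ``the real work'' in (ii) $\Rightarrow(\star)$, is exactly the step the paper passes over silently when it asserts that $\mathbb{P}(\mathsf{Y}'=\mathsf{X}')=0$ for all $\mathsf{Y}'\in\mathcal{Y}/\mathcal{B}_{\mathbb{R}}$ implies $(\star\star)$; both proofs need it (it is standard, e.g.\ via measurable enumeration of the jumps of the conditional distribution function), so your accounting of where the difficulty sits is, if anything, more explicit than the original.
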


%
\begin{definition}
Let $\{\mathcal{X},\mathcal{Y}\}\subset\varLambda$, $\mathcal
{Y}\subset
\mathcal{X}$, $\mathcal{X}$ countably generated up to negligible sets.
Following \cite{emery-sch} call $\mathcal{X}$ conditionally
non-atomic given $\mathcal{Y}$ when the conditions
\ref{compl:i'}-\ref{compl:iv} of
Proposition~\ref{proposition:complements} prevail.
\end{definition}

%
\begin{example}
\label{example:tweak}
Let $\{\mathcal{A},\mathcal{B},\mathcal{X}\}\subset\varLambda$,
$\mathcal{X}\subset\mathcal{A}+\mathcal{B}$. It can happen that
$\mathcal{A}$, $\mathcal{B}$, $\mathcal{X}$ are pairwise independent\index{pairwise independent}
\cite[Exercise~2.1(3)]{chaumont-yor}, and even when it is so, it may
then happen that\vadjust{\goodbreak} there is no $\mathcal{X}'\in\varLambda$ with
$\mathcal{X}'\subset\mathcal{B}$ and $\mathcal{A}+ \mathcal{X}=
\mathcal{A}+\mathcal{X}'$, i.e. $\mathcal{X}\subset((\mathcal{A}
\lor\mathcal{X})\land\mathcal{B})\lor\mathcal{A}$ may fail (in
particular one can have $\mathcal{X}$ independent of $\mathcal{B}$, but
not measurable w.r.t. $\mathcal{A}$
\cite[Exercise~2.1(2)]{chaumont-yor}). In the ``discrete''
setting\footnote{In precise terms, by ``discrete'', we mean here, and
in what follows, that every $\sigma$-field under consideration is
generated up to negligible sets by a discrete random variable.} take,
e.g., $\xi_{i}$, $i\in\{1,2,3,4\}$, independent equiprobable signs.\index{independent ! equiprobable signs} Let
$\mathcal{A}=\overline{\sigma}(\xi_{1},\xi_{2})$, $\mathcal
{B}=\overline{
\sigma}(\xi_{3},\xi_{4})$, $\mathcal{X}=\overline{\sigma}(\xi
_{1}\xi
_{3}+\xi_{2}\xi_{4})$. Then it is mechanical to check that $(
\mathcal{X}\lor\mathcal{A})\land\mathcal{B}=\overline{\sigma
}(\xi
_{3}\xi_{4})$ (e.g.,
for inclusion $\supset$ one can notice that $(\xi_{1}\xi
_{3}+\xi_{2}\xi_{4})^{2}=2(1+\xi_{1}\xi_{2}\xi_{3}\xi_{4})$; for the
reverse inclusion one can consider the behavior of the indicators of the
elements of $\sigma(\xi_{3},\xi_{4})$ on the atoms of $\sigma(\xi
_{1},\xi_{2},\xi_{1}\xi_{3}+\xi_{2}\xi_{4})$). But $\xi_{1}\xi
_{3}+\xi
_{2}\xi_{4}$ is not measurable w.r.t. $\mathcal{A}\lor((\mathcal{A}
\lor\mathcal{X})\land\mathcal{B}\xch{)}{))}=\overline{\sigma}(\xi_{1},\xi
_{2},\xi_{3}\xi_{4})$, indeed $\xi_{1}\xi_{3}+\xi_{2}\xi_{4}$ is
not \xch{a.s.}{a.s}
constant on the atom $\{\xi_{1}=1,\xi_{2}=1,\xi_{3}\xi_{4}=1\}$ of
$\sigma(\xi_{1},\xi_{2},\xi_{3}\xi_{4})$. To tweak this to the
``continuous'' case,\footnote{To be precise, by ``continuous'', we mean
to say here, and in what follows, that every $\sigma$-field under
consideration is generated up to negligible sets by a diffuse random
variable.} simply take a sequence $(\xi_{i})_{i\in\mathbb{N}}$ of
independent equiprobable signs\index{independent ! equiprobable signs} and set $\mathcal{A}=\overline{\sigma
}(\xi_{2i}:i\in\mathbb{N})$, $\mathcal{B}=\overline{\sigma}(\xi_{2i+1}:i
\in\mathbb{N}_{0})$, $\mathcal{X}=\overline{\sigma}(\xi_{1}\xi_{2}+
\xi_{3}\xi_{4},\xi_{5}\xi_{6}+\xi_{7}\xi_{8},\ldots)$. By
Proposition~\ref{distributivity} and the preceding, it follows that
$(\mathcal{X}\lor\mathcal{A})\land\mathcal{B}=\overline{\sigma
}(\xi
_{1}\xi_{3},\xi_{5}\xi_{7},\ldots)$, and we see that $\xi_{1}\xi_{2}+
\xi_{3}\xi_{4}$ is not measurable w.r.t. $((\mathcal{X}\lor
\mathcal{A})\land\mathcal{B})\lor\mathcal{A}$, for, exactly as before,
it is not measurable w.r.t. $\overline{\sigma}(\xi_{2},\xi_{4},\xi
_{1}\xi_{3})=[((\mathcal{X}\lor\mathcal{A})\land\mathcal{B})\lor
\mathcal{A}]\land\overline{\sigma}(\xi_{1},\ldots,\xi_{4})$.\looseness=1
\end{example}

%
\begin{examples}
Let $\{\mathcal{X},\mathcal{Y}\}\subset\varLambda$, $\mathcal
{Y}\subset
\mathcal{X}$.\vspace*{-1.5pt}
\begin{enumerate}[(a)]%
\item
We have already seen in Example~\ref{example:not-exist} that in general
$\mathcal{Y}$ may fail to have a complement in $\mathcal{X}$, though by
Proposition~\ref{proposition:complements} this cannot happen when
$\mathcal{X}$ is essentially separable and everything is ``sufficiently
continuous''. Example~\ref{example:not-unique} shows, in a ``discrete''
setting, that even when $\mathcal{Y}$ has a complement in $
\mathcal{X}$, then it is not necessarily unique. To see the latter also
in the ``continuous'' setting take a doubly infinite sequence
$(\xi_{i})_{i\in\mathbb{Z}}$ of independent equiprobable signs,\index{independent ! equiprobable signs} and set
$\mathcal{X}=\overline{\sigma}(\xi_{i}:i\in\mathbb{Z})$, $
\mathcal{Y}=\overline{\sigma}(\xi_{i}:i\in\mathbb{N})$. Then
$\mathcal{Y}+\overline{\sigma}(\xi_{i}:i\in\mathbb{Z}_{\leq0})=
\mathcal{X}$ but also $\mathcal{Y}+\overline{\sigma}(\xi_{i}\xi_{i+1}:i
\in\mathbb{Z}_{\leq0})=\mathcal{X}$.\vspace*{-1.5pt}
\item
Even when the equivalent conditions of
Proposition~\ref{proposition:complements} are met, and a $\mathcal{Z}
\in\varLambda$ satisfies $\mathcal{Y}\lor\mathcal{Z}=\mathcal{X}$, there
may be no $\mathcal{Z}'\in\varLambda$ with $\mathcal{Z}'\subset
\mathcal{Z}$ and $\mathcal{Y}+\mathcal{Z}'=\mathcal{X}$. The following
example of this situation is essentially verbatim from \cite[p. 11,
Remark (b)]{emery-sch}. Let $\varOmega=([0,\frac{1}{2}]\times[0,1])
\cup([\frac{1}{2},1]\times[0,\frac{1}{2}])\cup([1,\frac{3}{2}]
\times[\frac{1}{2},1])$, $\mathcal{M}=\mathcal{B}_{\varOmega}$, and
$\mathbb{P}$ be the (restriction of the) Lebesgue measure. Let
$\mathsf{Y}$ be the projection onto the first coordinate and
$\mathsf{Z}$ be the projection onto the second coordinate, $
\mathcal{Y}=\overline{\sigma}(\mathsf{Y})$, $\mathcal{Z}=\overline{
\sigma}(\mathsf{Z})$, $\mathcal{X}=\overline{\sigma}(\mathsf{Y},
\mathsf{Z})=\mathcal{M}$. Then $\vert\mathsf{Z}-\frac{1}{2}\vert$ is
independent of $\mathcal{Y}$, verifying \ref{compl:ii}, though
$\mathcal{Y}$ and $\mathcal{Z}$ are not independent. Suppose that
$\mathcal{Z}'\in\varLambda$ satisfies $\mathcal{Z}'\subset\mathcal{Z}$
and $\mathcal{Y}\lor\mathcal{Z}'=\mathcal{X}$. The $\sigma$-field
$\mathcal{X}$ and hence $\mathcal{Z}'$ is countably generated up to
negligible sets so there is $\mathsf{Z}'\in\mathcal{Z}'/
\mathcal{B}_{\mathbb{R}}$ such that $\mathcal{Z}'=\overline{\sigma}(
\mathsf{Z}')$. By the Doob--Dynkin lemma there are $f\in\mathcal{B}
_{[0,1]}/\mathcal{B}_{\mathbb{R}}$ and $g\in\mathcal{B}_{[0,
\frac{3}{2}]\times\mathbb{R}}/\mathcal{B}_{[0,1]}$ such that a.s.
$\mathsf{Z}'=f(\mathsf{Z})$ and $\mathsf{Z}=g(\mathsf{Y},\mathsf{Z}')$.
Then $\mathsf{Z}=g(\mathsf{Y},f(\mathsf{Z}))$ a.s.; consequently by
Tonelli's theorem for Lebesgue-almost every $y\in[0,\frac{1}{2}]$,
$z=g(y,f(z))$ for Lebesgue-almost all $z\in[0,1]$. Fix such $y$. Then
because $\mathsf{Z}$ is absolutely continuous, one obtains $
\mathsf{Z}=g(y,f(\mathsf{Z}))=g(y,\mathsf{Z}')$ a.s.; this forces
$\mathcal{Z}'=\mathcal{Z}$, preventing $\mathcal{Z}'\perp\!\!\!\!
\perp\mathcal{Y}$.
\item
If the equivalent conditions of
Proposition~\ref{proposition:complements} are met and if $\mathsf{Z}
\in\mathcal{X}/\mathcal{B}_{\mathbb{R}}$ has diffuse law\index{diffuse} and is
independent of $\mathcal{Y}$, there may exist no $\mathcal{Z}'\in
\varLambda$ such that $\mathcal{Y}+\mathcal{Z}'=\mathcal{X}$ and
$\overline{\sigma}(\mathsf{Z})\subset\mathcal{Z}'$ (however this
cannot happen if ceteris paribus $\mathsf{Z}$ is discrete rather than
continuous -- see
Corollary~\ref{proposition:non-atomic}\ref{non-atomic:ii}\ref
{non-atomic:ii:b}).
We repeat here for the reader's convenience
\cite[p. 11, Remark (a)]{emery-sch} exemplifying this scenario. Let
$\mathsf{X},\mathsf{Y},\mathsf{Z}$ be independent random variables\index{independent ! random variables} with
uniform law on $[0,1]$ and let $\mathcal{Y}=\overline{\sigma}(
\mathsf{Y})$, $\mathcal{X}=\overline{\sigma}(\mathsf{Y},\mathsf{Z},
\mathsf{X}\mathbbm{1}_{\{\mathsf{Y}< \frac{1}{2}\}})$. Clearly
$\mathcal{X}$ is countably generated up to negligible sets;
$\mathsf{Z}$ has a diffuse law\index{diffuse} and is independent of $\mathcal{Y}$; in
particular \ref{compl:ii} is verified. Let $\mathcal{Z}'\in\varLambda$
be such that $\mathcal{Y}\perp\!\!\!\!\perp\mathcal{Z}'\supset
\overline{
\sigma}(\mathsf{Z})$, $\mathcal{Z}'\subset\mathcal{X}$. There is a
$\mathsf{Z}'\in\mathcal{Z}'/\mathcal{B}_{\mathbb{R}}$ such that
$\mathcal{Z}'=\overline{\sigma}(\mathsf{Z}')$. By the Doob--Dynkin lemma
there are $f\in\mathcal{B}_{\mathbb{R}}/\mathcal{B}_{[0,1]}$ and
$g\in\mathcal{B}_{[0,1]^{3}}/\mathcal{B}_{\mathbb{R}}$ such that a.s.
$\mathsf{Z}=f(\mathsf{Z}')$ and $\mathsf{Z}'=g(\mathsf{Y},\mathsf{Z},
\mathsf{X}\mathbbm{1}_{\{\mathsf{Y}< \frac{1}{2}\}})$. Then on
$\{\mathsf{Y}\geq\frac{1}{2}\}$, $\mathsf{Z}'=g(\mathsf{Y},
\mathsf{Z},0)=g(\mathsf{Y},f(\mathsf{Z}'),0)$ a.s.; hence by Tonelli's
theorem for Lebesgue-almost every $y\in[\frac{1}{2},1]$,
$z'=g(y,f(z'),0)$ for $\mathsf{Z}'_{\star}\mathbb{P}$-almost every
$z'\in\mathbb{R}$. Fix such $y$. It follows that $\mathsf{Z}'=g(y,f(
\mathsf{Z}'),0)=g(y,\mathsf{Z},0)$ a.s.; this forces $\mathcal
{Z}'=\overline{
\sigma}(\mathsf{Z})$, which precludes $\mathcal{Y}\lor\mathcal{Z}'=
\mathcal{X}$.
\end{enumerate}
\end{examples}

\begin{proof}[Proof of Proposition~\ref{proposition:complements}]
We follow closely the proof of \cite[Proposition~4]{emery-sch}.

\ref{compl:i'} $\Rightarrow$ \ref{compl:i} because $\mathcal{X}$ is
countably generated up to negligible sets.

\ref{compl:iii} $ \Rightarrow$ \ref{compl:ii} is trivial.

\ref{compl:ii} $\Rightarrow$ \ref{compl:i} by Tonelli's theorem.

\ref{compl:iv} $\Rightarrow$ \ref{compl:i'}. Let $\mathsf{X}\in
\mathcal{X}/\mathcal{B}_{\mathbb{R}}$ be such that $\mathcal
{X}=\overline{
\sigma}(\mathsf{X})$, take $\mathsf{Y}\in\mathcal{Y}/\mathcal{B}
_{\mathbb{R}}$. Fix $x_{0}\in\mathbb{R}$ for which $\mathbb{P}(
\mathsf{X}=x_{0})=0$. Then $\mathcal{Y}\lor\overline{\sigma}(
\mathsf{X}\mathbbm{1}_{\{\mathsf{X}\ne\mathsf{Y}\}}+x_{0}\mathbbm{1}
_{\{\mathsf{X}=\mathsf{Y}\}})=\mathcal{X}$, hence by \ref{compl:iv}
$\mathsf{X}\mathbbm{1}_{\{\mathsf{X}\ne\mathsf{Y}\}}+x_{0}
\mathbbm{1}_{\{\mathsf{X}=\mathsf{Y}\}}$ has a diffuse law,\index{diffuse} and
therefore $\mathbb{P}(\mathsf{X}=\mathsf{Y})=0$.

\ref{compl:i} $\Rightarrow$ \ref{compl:iv}. Let $\mathsf{X}\in
\mathcal{X}/\mathcal{B}_{\mathbb{R}}$ be such that for every
$\mathsf{Y}\in\mathcal{Y}/\mathcal{B}_{\mathbb{R}}$, $\mathbb{P}(
\mathsf{X}=\mathsf{Y})=0$ and let $\mathsf{Z}\in\mathcal{X}/
\mathcal{B}_{\mathbb{R}}$ be such that $\mathcal{Y}\lor\overline{
\sigma}(\mathsf{Z})=\mathcal{X}$. Because $\mathcal{Y}$ is countably
generated up to negligible sets, there is $\mathsf{Y}\in
\mathcal{Y}/\mathcal{B}_{\mathbb{R}}$ such that $\mathcal
{Y}=\overline{
\sigma}(\mathsf{Y})$. Then $\overline{\sigma}(\mathsf{Y},\mathsf{Z})=
\mathcal{X}$ and by the Doob--Dynkin lemma there is $f\in
\mathcal{B}_{\mathbb{R}^{2}}/\mathcal{B}_{\mathbb{R}}$ such that a.s.
$\mathsf{X}=f(\mathsf{Y},\mathsf{Z})$. We conclude that for each
$z_{0}\in\mathbb{R}$, $\mathbb{P}(\mathsf{Z}=z_{0})\subset
\mathbb{P}(\mathsf{X}=f(\mathsf{Y},z_{0}))=0$.

\ref{compl:i} $\Rightarrow$ \ref{compl:iii}. Let again $\mathsf{X}
\in\mathcal{X}/\mathcal{B}_{\mathbb{R}}$ be such that for every
$\mathsf{Y}\in\mathcal{Y}/\mathcal{B}_{\mathbb{R}}$, $\mathbb{P}(
\mathsf{X}=\mathsf{Y})=0$. Take also $\mathsf{Y}\in\mathcal{Y}/
\mathcal{B}_{\mathbb{R}}$ such that $\mathcal{Y}=\overline{\sigma}(
\mathsf{Y})$ and $\mathsf{X}'\in\mathcal{X}/\mathcal{B}_{
\mathbb{R}}$ such that $\overline{\sigma}(\mathsf{X}')=\mathcal{X}$.
Let $\mu$ be the law of $\mathsf{Y}$ and let $(\nu_{y})_{y\in
\mathbb{R}}$ be a version of the conditional law of $\mathsf{X}'$ given
$\mathsf{Y}$: $(\mathbb{R}\ni y\mapsto\nu_{y}(A))\in\mathcal{B}_{
\mathbb{R}}/\mathcal{B}_{[0,1]}$ for each $A\in\mathcal{B}_{
\mathbb{R}}$; $\nu_{y}$ is a law on $\mathcal{B}_{\mathbb{R}}$ for each
$y\in\mathbb{R}$; and $\mathbb{E}[f(\mathsf{X}',\mathsf{Y})]=\int f(x',y)
\nu_{y}(dx')\mu(dy)$ for $f\in\mathcal{B}_{\mathbb{R}^{2}}/
\mathcal{B}_{[0,\infty]}$. Remark that in particular $(\star)$ a.s.
$\mathsf{X}'$ cannot fall into a maximal non-degenerate interval that
is negligible for $\nu_{Y}$. Besides, by the Doob--Dynkin lemma, there
is $g\in\mathcal{B}_{\mathbb{R}}/\mathcal{B}_{\mathbb{R}}$ such that
$\mathsf{X}=g(\mathsf{X}')$ a.s. Then $\mathbb{P}(\mathsf{Y}'=
\mathsf{X}')\subset\mathbb{P}(\mathsf{X}=g(\mathsf{Y}'))=0$ for any
$\mathsf{Y}'\in\mathcal{Y}/\mathcal{B}_{\mathbb{R}}$. From this it
follows that $(\star\star)$ $\nu_{y}$ is diffuse\index{diffuse} for $\mu$-almost
every $y\in\mathbb{R}$. Set now $\mathsf{Z}:=\nu_{\mathsf{Y}}((-
\infty,\mathsf{X}'])\in\mathcal{X}/\mathcal{B}_{[0,1]}$; then for
$\phi\in\mathcal{B}_{\mathbb{R}}/\mathcal{B}_{[0,\infty]}$ and
$z\in[0,1]$,
\begin{align*}
\mathbb{E}\bigl[\phi(\mathsf{Y});\mathsf{Z}\leq z\bigr]
&{}=\int\int\phi(y)
\mathbbm{1}_{[0,z]}(\nu_{y}\bigl(\bigl(-\infty,x']\bigr)\bigr)\nu_{y}\bigl(dx'\bigr)\mu(dy)\\
&{}=z\int\phi d\mu=\mathbb{P}(\mathsf{Z}\leq z)\mathbb{E}\bigl[\phi( \mathsf{Y})\bigr],
\end{align*}
because of $(\star\star)$. On account of $(\star)$, it also follows
from the equality $\mathsf{Z}=\nu_{Y}((-\infty,\break \mathsf{X}'])$ that
$\mathsf{X}'\in\overline{\sigma}(\mathsf{Z},\mathsf{Y})$. Thus
$\mathsf{Z}$ meets all the requisite properties.
\end{proof}
Several ``stability'' properties of conditionally non-atomic
$\sigma$-fields can be noted:

%
\begin{corollary}[Conditionally non-atomic $\sigma$-fields]
\label{proposition:non-atomic}%
\textup{\cite[Corollaries~3 and~4]{emery-sch}} Let $\{\mathcal{X},
\mathcal{Y},\mathcal{Z}\}\subset\varLambda$, $\mathcal{Y}\subset
\mathcal{X}$. Assume $\mathcal{X}\lor\mathcal{Z}$ is countably
generated up to negligible sets.
\begin{enumerate}[(i)]%
\item
\label{non-atomic:i}
If $\mathcal{X}\lor\mathcal{Z}$ is conditionally non-atomic given
$\mathcal{Y}\lor\mathcal{Z}$, then $\mathcal{X}$ is conditionally
non-atomic given $\mathcal{Y}$.
\item
\label{non-atomic:ii}
Suppose $\mathcal{X}$ is conditionally non-atomic given $\mathcal{Y}$.
\begin{enumerate}[(a)]%
\item
\label{non-atomic:ii:a}
If $\mathcal{X}$ and $\mathcal{Z}$ are independent, then $\mathcal{X}
\lor\mathcal{Z}$ is conditionally non-atomic given $\mathcal{Y}
\lor\mathcal{Z}$.
\item
\label{non-atomic:ii:b}
If $\mathcal{P}\subset\mathcal{X}$ is a denumerable partition\index{denumerable ! partition} of
$\varOmega$, then $\mathcal{X}$ is conditionally non-atomic given
$\mathcal{Y}\lor\overline{\sigma}(\mathcal{P})$; if further
$\overline{\sigma}(\mathcal{P})\perp\!\!\!\!\perp\mathcal{Y}$, then
there exists $\mathsf{Z}\in\mathcal{X}/\mathcal{B}_{[0,1]}$ with
uniform law such that $\mathcal{Y}+ \overline{\sigma}(\mathsf{Z})=
\mathcal{X}$ and $\overline{\sigma}(\mathcal{P})\subset\overline{
\sigma}(\mathsf{Z})$.
\end{enumerate}
\end{enumerate}
\end{corollary}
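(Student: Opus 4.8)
My plan rests on a reformulation that is already implicit in the proof of Proposition~\ref{proposition:complements} (the step producing $(\star\star)$ there). Writing $\nu^{\mathcal{G}}$ for a regular conditional distribution of a real random variable given a complete field $\mathcal{G}$, I would first record, for essentially separable $\mathcal{X}\supset\mathcal{Y}$ with a (equivalently, any) generator $\mathsf{X}$ of $\mathcal{X}$, the equivalence $(\dagger)$: $\mathcal{X}$ is conditionally non-atomic given $\mathcal{Y}$ iff $\nu^{\mathcal{Y}}$ of $\mathsf{X}$ is a.s.\ diffuse. One direction is $(\star\star)$; for the converse, if $\nu^{\mathcal{Y}}$ is a.s.\ diffuse then $\mathbb{P}(\mathsf{X}=\mathsf{Y}')=\mathbb{E}[\nu^{\mathcal{Y}}(\{\mathsf{Y}'\})]=0$ for each $\mathsf{Y}'\in\mathcal{Y}/\mathcal{B}_{\mathbb{R}}$, which is condition~\ref{compl:i}. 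I isolate the last computation as a tool $(\ddagger)$: if the conditional law of $\mathsf{X}$ given $\mathcal{G}$ is a.s.\ diffuse, then $\mathbb{P}(\mathsf{X}=\mathsf{V})=0$ for every $\mathsf{V}\in\mathcal{G}/\mathcal{B}_{\mathbb{R}}$. Since $\mathcal{X}\lor\mathcal{Z}$ is essentially separable, so is every subfield in play; I fix generators $\mathsf{Y}$ of $\mathcal{Y}$ and $\mathsf{Z}_{0}$ of $\mathcal{Z}$ and use $(\mathsf{X},\mathsf{Z}_{0})$ as an $\mathbb{R}^{2}$-valued generator of $\mathcal{X}\lor\mathcal{Z}$ (diffuseness is invariant under Borel isomorphism). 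The recurring difficulty, and the main obstacle, is that diffuseness of a conditional law is \emph{not} preserved under arbitrary coordinate projections; so each transition between the conditioning fields $\mathcal{Y}$, $\mathcal{Y}\lor\mathcal{Z}$, $\mathcal{Y}\lor\overline{\sigma}(\mathcal{P})$ must be arranged so that the ``extra'' coordinate is measurable with respect to the conditioning.

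For \ref{non-atomic:i} I would exploit exactly this: because $\mathsf{Z}_{0}$ is $(\mathcal{Y}\lor\mathcal{Z})$-measurable, the conditional law of $(\mathsf{X},\mathsf{Z}_{0})$ given $\mathcal{Y}\lor\mathcal{Z}$ is of the form $\rho_{\omega}\otimes\delta_{\mathsf{Z}_{0}(\omega)}$, where $\rho_{\omega}$ is the conditional law of $\mathsf{X}$ given $\mathcal{Y}\lor\mathcal{Z}$; such a product is diffuse precisely when $\rho_{\omega}$ is (its atoms have masses $\rho_{\omega}(\{x\})$). The hypothesis and $(\dagger)$ thus give $\rho_{\omega}$ a.s.\ diffuse. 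As every $\mathsf{Y}'\in\mathcal{Y}/\mathcal{B}_{\mathbb{R}}$ is also $(\mathcal{Y}\lor\mathcal{Z})$-measurable, $(\ddagger)$ yields $\mathbb{P}(\mathsf{X}=\mathsf{Y}')=0$, which is condition~\ref{compl:i} for $(\mathcal{X},\mathcal{Y})$.

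For \ref{non-atomic:ii}\ref{non-atomic:ii:a} I would first upgrade independence to conditional independence: since $\mathcal{Z}\perp\!\!\!\!\perp\mathcal{X}$ and $\mathcal{Y}\subset\mathcal{X}$ (so $\mathcal{X}=\mathcal{X}\lor\mathcal{Y}$), the middle assertion of Lemma~\ref{lema} gives $\mathcal{X}\perp\!\!\!\!\perp_{\mathcal{Y}}\mathcal{Z}$. Then the final assertion of Lemma~\ref{lema}, applied to $\mathsf{F}=\phi(\mathsf{X})$ for bounded nonnegative Borel $\phi$, gives $\mathbb{E}[\phi(\mathsf{X})\mid\mathcal{Y}\lor\mathcal{Z}]=\mathbb{E}[\phi(\mathsf{X})\mid\mathcal{Y}]$ a.s.; letting $\phi$ range over indicators of a countable generating $\pi$-system of $\mathcal{B}_{\mathbb{R}}$ identifies the conditional law of $\mathsf{X}$ given $\mathcal{Y}\lor\mathcal{Z}$ with $\nu^{\mathcal{Y}}$, which is a.s.\ diffuse by hypothesis and $(\dagger)$. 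The same product computation as above (again $\mathsf{Z}_{0}$ is measurable) then shows the conditional law of $(\mathsf{X},\mathsf{Z}_{0})$ given $\mathcal{Y}\lor\mathcal{Z}$ is a.s.\ diffuse, whence the claim by $(\dagger)$.

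For \ref{non-atomic:ii}\ref{non-atomic:ii:b} I would put $\mathcal{Q}:=\overline{\sigma}(\mathcal{P})\subset\mathcal{X}$, $\mathsf{Q}:=\sum_{i}i\,\mathbbm{1}_{P_{i}}$, and write $P_{i}=\{\mathsf{X}\in B_{i}\}$ a.s.\ with Borel $B_{i}$ partitioning $\mathbb{R}$ mod the law of $\mathsf{X}$. On $P_{i}$ the conditional law of $\mathsf{X}$ given $\mathcal{Y}\lor\mathcal{Q}$ equals $A\mapsto\nu^{\mathcal{Y}}(A\cap B_{i})/\nu^{\mathcal{Y}}(B_{i})$, i.e.\ the restriction of $\nu^{\mathcal{Y}}$ to $B_{i}$ renormalized, hence a.s.\ diffuse; by $(\dagger)$ this gives the first assertion. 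For the second, I would apply condition~\ref{compl:iii} of Proposition~\ref{proposition:complements} to the now conditionally non-atomic pair $(\mathcal{X},\mathcal{Y}\lor\mathcal{Q})$, obtaining a uniform $\mathsf{U}\in\mathcal{X}/\mathcal{B}_{[0,1]}$ with $\mathsf{U}\perp\!\!\!\!\perp\mathcal{Y}\lor\mathcal{Q}$ and $(\mathcal{Y}\lor\mathcal{Q})+\overline{\sigma}(\mathsf{U})=\mathcal{X}$. The hypotheses $\mathsf{U}\perp\!\!\!\!\perp\mathcal{Y}\lor\mathcal{Q}$ and $\mathcal{Q}\perp\!\!\!\!\perp\mathcal{Y}$ give $\mathsf{U}\perp\!\!\!\!\perp\mathsf{Q}$ and $(\mathsf{Q},\mathsf{U})\perp\!\!\!\!\perp\mathcal{Y}$. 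Finally I would glue: enumerating the cells of positive probability and setting $a_{i}:=\sum_{j\leq i}\mathbb{P}(P_{j})$, put $\mathsf{Z}:=a_{\mathsf{Q}-1}+\mathbb{P}(P_{\mathsf{Q}})\,\mathsf{U}$, which on $\{\mathsf{Q}=i\}$ is uniform on $[a_{i-1},a_{i})$; since $\mathsf{U}\perp\!\!\!\!\perp\mathsf{Q}$ and the interval lengths are the cell masses, $\mathsf{Z}$ is uniform on $[0,1]$, while $\mathsf{Z}$ recovers both $\mathsf{Q}$ and $\mathsf{U}$, so $\overline{\sigma}(\mathsf{Z})=\mathcal{Q}\lor\overline{\sigma}(\mathsf{U})$. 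Then $\mathsf{Z}\in\mathcal{X}/\mathcal{B}_{[0,1]}$, $\mathsf{Z}\perp\!\!\!\!\perp\mathcal{Y}$ (being a function of $(\mathsf{Q},\mathsf{U})$), $\mathcal{Y}\lor\overline{\sigma}(\mathsf{Z})=\mathcal{Y}\lor\mathcal{Q}\lor\overline{\sigma}(\mathsf{U})=\mathcal{X}$, and $\overline{\sigma}(\mathcal{P})=\mathcal{Q}\subset\overline{\sigma}(\mathsf{Z})$, which is precisely $\mathcal{Y}+\overline{\sigma}(\mathsf{Z})=\mathcal{X}$ with $\overline{\sigma}(\mathcal{P})\subset\overline{\sigma}(\mathsf{Z})$.
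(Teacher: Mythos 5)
Your proposal is correct, but for three of the four assertions it follows a genuinely different route from the paper. The paper's proof never touches regular conditional distributions: it shuttles directly between the equivalent conditions of Proposition~\ref{proposition:complements}. For \ref{non-atomic:i} it notes that any $\mathsf{Z}$ with $\mathcal{Y}\lor\overline{\sigma}(\mathsf{Z})=\mathcal{X}$ also satisfies $(\mathcal{Y}\lor\mathcal{Z})\lor\overline{\sigma}(\mathsf{Z})=\mathcal{X}\lor\mathcal{Z}$, so condition~\ref{compl:iv} for the big pair forces $\mathsf{Z}$ diffuse, giving \ref{compl:iv} for the small pair; for \ref{non-atomic:ii}\ref{non-atomic:ii:a} it takes a diffuse $\mathsf{Z}\in\mathcal{X}/\mathcal{B}_{\mathbb{R}}$ independent of $\mathcal{Y}$ (condition~\ref{compl:ii}) and observes it is automatically independent of $\mathcal{Y}\lor\mathcal{Z}$; for the first half of \ref{non-atomic:ii}\ref{non-atomic:ii:b} it argues that if $\mathcal{Y}\lor\overline{\sigma}(\mathsf{P},\mathsf{Z})=\mathcal{X}$ then $(\mathsf{P},\mathsf{Z})$ is diffuse by \ref{compl:iv}, and diffuseness of $(\mathsf{P},\mathsf{Z})$ with $\mathsf{P}$ discrete forces $\mathsf{Z}$ diffuse. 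You instead set up the characterization $(\dagger)$ (conditional non-atomicity $\Leftrightarrow$ a.s.\ diffuseness of the conditional law of a generator), which is indeed implicit in the paper's proof of Proposition~\ref{proposition:complements} (the $(\star\star)$ step plus the reverse Fubini-type computation), and then carry out kernel computations: a product-with-Dirac decomposition for \ref{non-atomic:i}, an identification of kernels via Lemma~\ref{lema} for \ref{non-atomic:ii}\ref{non-atomic:ii:a}, and a normalized-restriction formula for discrete conditioning in \ref{non-atomic:ii}\ref{non-atomic:ii:b}. What the paper's route buys is brevity and the avoidance of kernel manipulations (no a.s.\ identification of regular conditional distributions, no measurable-selection issues beyond the single $(\star\star)$ assertion); what yours buys is a single uniform mechanism for all parts, with the ``diffuse conditional law'' intuition made explicit, together with a genuinely more explicit output in the last part: your gluing $\mathsf{Z}=a_{\mathsf{Q}-1}+\mathbb{P}(P_{\mathsf{Q}})\mathsf{U}$ exhibits concretely the uniform generator containing $\overline{\sigma}(\mathcal{P})$, where the paper merely invokes essential separability of $\overline{\sigma}(\mathsf{Z}',\mathsf{P})$ and says the resulting diffuse generator ``may be chosen to be uniform'' (and leaves the independence $\overline{\sigma}(\mathsf{Z}',\mathsf{P})\perp\!\!\!\!\perp\mathcal{Y}$ implicit, which you verify). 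Only in that final part do the two arguments essentially coincide.
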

\begin{proof}
We follow closely the proofs of
\cite[Corollaries~3 and~4]{emery-sch}.

\ref{non-atomic:i}. Let $\mathsf{Z}\in\mathcal{X}/\mathcal{B}_{
\mathbb{R}}$ be such that $\mathcal{X}=\mathcal{Y}\lor\overline{
\sigma}(\mathsf{Z})$; then $\mathcal{X}\lor\mathcal{Z}=(\mathcal{Y}
\lor\mathcal{Z})\lor\overline{\sigma}(\mathsf{Z})$. Thus if
$\mathcal{X}\lor\mathcal{Z}$ is conditionally non-atomic given
$\mathcal{Y}\lor\mathcal{Z}$, then by
Proposition~\ref{proposition:complements}\ref{compl:iv} $\mathsf{Z}$ is
diffuse,\index{diffuse} which makes $\mathcal{X}$ conditionally non-atomic given
$\mathcal{Y}$ by the very same argument.

\ref{non-atomic:ii}\ref{non-atomic:ii:a}. Let $\mathcal{X}$ and
$\mathcal{Z}$ be independent. By
Proposition~\ref{proposition:complements}\ref{compl:ii}, there exists
$\mathsf{Z}\in\mathcal{X}/\mathcal{B}_{\mathbb{R}}$ independent of
$\mathcal{Y}$ and having a diffuse law;\index{diffuse} such $\mathsf{Z}$ is then also
independent of $\mathcal{Y}\lor\mathcal{Z}$, so that by the very same
condition $\mathcal{X}\lor\mathcal{Z}$ is conditionally non-atomic
given $\mathcal{Y}\lor\mathcal{Z}$.

\ref{non-atomic:ii}\ref{non-atomic:ii:b}. There is a random variable
$\mathsf{P}\in\mathcal{X}/2^{\mathbb{N}}$ for which $\overline{
\sigma}(\mathcal{P})=\overline{\sigma}(\mathsf{P})$. If $\mathsf{Z}
\in\mathcal{X}/\mathcal{B}_{\mathbb{R}}$ is such that $\mathcal{X}=(
\mathcal{Y}\lor\overline{\sigma}(\mathsf{P}))\lor\overline
{\sigma
}(\mathsf{Z})=\mathcal{Y}\lor\overline{\sigma}(\mathsf{P},
\mathsf{Z})$, then $(\mathsf{P},\mathsf{Z})$ has a diffuse law\index{diffuse} by
Proposition~\ref{proposition:complements}\ref{compl:iv}, hence (because
$\mathsf{P}$ has a denumerable range\index{denumerable ! range}) $\mathsf{Z}$ has a diffuse law,\index{diffuse}
which entails the desired conclusion by the very same
argument. 
Now suppose
$\mathsf{P}$ is independent of $\mathcal{Y}$. Via
Proposition~\ref{proposition:complements}\ref{compl:iii} let
$\mathsf{Z}'\in\mathcal{X}/\mathcal{B}_{[0,1]}$ have uniform law and
be a complement for $\mathcal{Y}+\overline{\sigma}(\mathsf{P})$ in
$\mathcal{X}$. Of course $\overline{\sigma}(\mathsf{Z}',\mathsf{P})$
is essentially separable so there is $\mathsf{Z}\in\sigma(
\mathsf{Z}',\mathsf{P})/\mathcal{B}_{\mathbb{R}}$ with $\overline{
\sigma}(\mathsf{Z})=\overline{\sigma}(\mathsf{Z}',\mathsf{P})$.
$\mathsf{Z}$ is diffuse,\index{diffuse} because $\mathsf{Z}'$ is, hence may be chosen
to be uniform on $[0,1]$.
\end{proof}
The next proposition investigates to what extent complements\index{complements} are
``hereditary''.
%
%
\begin{proposition}[Complements II\index{complements}]
\label{proposition:complementsII}
Let $\{\mathcal{X},\mathcal{Y},\mathcal{Z}\}\subset\varLambda$,
$\mathcal{Z}\subset\mathcal{X}+\mathcal{Y}$. Then the following
statements are equivalent.
\begin{enumerate}[(i)]%
\item
\label{complementII:i}
$\mathcal{Z}=(\mathcal{X}\land\mathcal{Z}) \lor(\mathcal{Y}\land
\mathcal{Z})$, i.e. $\mathcal{X}\land\mathcal{Z}$ is a complement of
$\mathcal{Y}\land\mathcal{Z}$ in $\mathcal{Z}$.
\item
\label{complementII:ii}
$\mathcal{X}$ and $\mathcal{Y}$ are conditionally independent\index{conditional independence} given
$\mathcal{Z}$, and $\mathbb{P}[Y\vert\mathcal{Z}]\in\mathcal{Y}/
\mathcal{B}_{[-\infty,\infty]}$ for $Y\in\mathcal{Y}$, $\mathbb{P}[X
\vert\mathcal{Z}]\in\mathcal{X}/\mathcal{B}_{[-\infty,\infty]}$ for
$X\in\mathcal{X}$.
\end{enumerate}
\end{proposition}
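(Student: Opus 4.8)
The plan is to set $\mathcal{X}_{0}:=\mathcal{X}\land\mathcal{Z}$, $\mathcal{Y}_{0}:=\mathcal{Y}\land\mathcal{Z}$ and $\mathcal{W}:=\mathcal{X}_{0}\lor\mathcal{Y}_{0}$. Since $\mathcal{X}_{0}\subset\mathcal{X}$ and $\mathcal{Y}_{0}\subset\mathcal{Y}$ are sub-$\sigma$-fields of the independent $\mathcal{X}$ and $\mathcal{Y}$, they are themselves independent, so $\mathcal{W}=\mathcal{X}_{0}+\mathcal{Y}_{0}$ and condition \ref{complementII:i} is precisely the assertion $\mathcal{Z}=\mathcal{W}$; the inclusion $\mathcal{W}\subset\mathcal{Z}$ being automatic, the whole content of \ref{complementII:i} is $\mathcal{Z}\subset\mathcal{W}$. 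I would also record at the outset the elementary remark that, thanks to completeness, the hypothesis $\mathbb{P}[X\vert\mathcal{Z}]\in\mathcal{X}/\mathcal{B}_{[-\infty,\infty]}$ is equivalent to $\mathbb{P}[X\vert\mathcal{Z}]=\mathbb{P}[X\vert\mathcal{X}_{0}]$ a.s.: indeed $\mathbb{P}[X\vert\mathcal{Z}]$ is automatically $\mathcal{Z}$-measurable, so if it is in addition $\mathcal{X}$-measurable it is $\mathcal{X}_{0}$-measurable, and then the tower property over $\mathcal{X}_{0}\subset\mathcal{Z}$ gives the stated identity; the symmetric statement holds for $\mathcal{Y}$.

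For \ref{complementII:i} $\Rightarrow$ \ref{complementII:ii} I would invoke the independent conditioning Lemma~\ref{lema}, with its $(\mathcal{X},\mathcal{Y})$ taken to be $(\mathcal{X}_{0},\mathcal{Y}_{0})$ and with $\mathsf{F}=\mathbbm{1}_{X}$, $\mathsf{G}=\mathbbm{1}_{Y}$ for $X\in\mathcal{X}$, $Y\in\mathcal{Y}$. Its hypothesis $\mathcal{Y}_{0}\lor\sigma(\mathbbm{1}_{Y})\perp\!\!\!\!\perp\mathcal{X}_{0}\lor\sigma(\mathbbm{1}_{X})$ is automatic, both sides lying inside the independent $\mathcal{Y}$ and $\mathcal{X}$ respectively. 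Since $\mathcal{Z}=\mathcal{X}_{0}\lor\mathcal{Y}_{0}$ under \ref{complementII:i}, the Lemma yields $\mathbb{P}[X\cap Y\vert\mathcal{Z}]=\mathbb{P}[X\vert\mathcal{X}_{0}]\,\mathbb{P}[Y\vert\mathcal{Y}_{0}]$ a.s. Specializing to $\mathsf{F}\equiv 1$ (resp. $\mathsf{G}\equiv 1$) gives $\mathbb{P}[Y\vert\mathcal{Z}]=\mathbb{P}[Y\vert\mathcal{Y}_{0}]\in\mathcal{Y}$ (resp. $\mathbb{P}[X\vert\mathcal{Z}]=\mathbb{P}[X\vert\mathcal{X}_{0}]\in\mathcal{X}$), which are the measurability assertions of \ref{complementII:ii}; feeding these back into the displayed factorization turns it into $\mathbb{P}[X\cap Y\vert\mathcal{Z}]=\mathbb{P}[X\vert\mathcal{Z}]\,\mathbb{P}[Y\vert\mathcal{Z}]$, i.e. $\mathcal{X}\perp\!\!\!\!\perp_{\mathcal{Z}}\mathcal{Y}$.

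For the converse \ref{complementII:ii} $\Rightarrow$ \ref{complementII:i}, the measurability hypotheses give (as in the opening remark) $\mathbb{P}[X\vert\mathcal{Z}]=\mathbb{P}[X\vert\mathcal{X}_{0}]$ and $\mathbb{P}[Y\vert\mathcal{Z}]=\mathbb{P}[Y\vert\mathcal{Y}_{0}]$, while conditional independence gives $\mathbb{P}[X\cap Y\vert\mathcal{Z}]=\mathbb{P}[X\vert\mathcal{Z}]\,\mathbb{P}[Y\vert\mathcal{Z}]$. Combining the three, $\mathbb{P}[X\cap Y\vert\mathcal{Z}]=\mathbb{P}[X\vert\mathcal{X}_{0}]\,\mathbb{P}[Y\vert\mathcal{Y}_{0}]$, which is $\mathcal{W}$-measurable. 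I would then run a Dynkin $\pi$/$\lambda$ argument: the sets $X\cap Y$ ($X\in\mathcal{X}$, $Y\in\mathcal{Y}$) form a $\pi$-system generating $\mathcal{X}\lor\mathcal{Y}$, and the class of $U\in\mathcal{X}\lor\mathcal{Y}$ for which $\mathbb{P}[U\vert\mathcal{Z}]$ is $\mathcal{W}$-measurable is a $\lambda$-system (closedness under monotone limits using conditional monotone convergence and completeness of $\mathcal{W}$). Hence $\mathbb{P}[U\vert\mathcal{Z}]$ is $\mathcal{W}$-measurable for every $U\in\mathcal{X}\lor\mathcal{Y}$. Since $\mathcal{Z}\subset\mathcal{X}+\mathcal{Y}=\mathcal{X}\lor\mathcal{Y}$, for $U\in\mathcal{Z}$ we have $\mathbbm{1}_{U}=\mathbb{P}[U\vert\mathcal{Z}]$ a.s., so $\mathbbm{1}_{U}$ is $\mathcal{W}$-measurable and, $\mathcal{W}$ being complete, $U\in\mathcal{W}$. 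This gives $\mathcal{Z}\subset\mathcal{W}$, hence \ref{complementII:i}.

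The genuinely substantive step is this last implication: the trick is to notice that conditional independence plus the measurability hypotheses force $\mathbb{P}[\,\cdot\,\vert\mathcal{Z}]$ to land inside $\mathcal{W}$ on a generating $\pi$-system, after which the fact that $\mathcal{Z}$ sits inside $\mathcal{X}+\mathcal{Y}$ lets one recover each $\mathcal{Z}$-indicator as its own conditional probability and thereby place it in $\mathcal{W}$. The points requiring care are the (routine, completeness-driven) identification $\mathbb{P}[X\vert\mathcal{Z}]=\mathbb{P}[X\vert\mathcal{X}_{0}]$ and the verification that the relevant class is genuinely a $\lambda$-system; everything else is a direct appeal to Lemma~\ref{lema}.
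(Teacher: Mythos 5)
Your proof is correct and takes essentially the same route as the paper's: the forward direction is the paper's argument (the independent-conditioning Lemma applied to $\mathcal{X}\land\mathcal{Z}$ and $\mathcal{Y}\land\mathcal{Z}$, then specializing $X=\varOmega$, $Y=\varOmega$ and feeding the identities back in), and the converse is the paper's $\pi/\lambda$-argument showing $\mathbb{P}[Z\vert\mathcal{Z}]$ is $(\mathcal{X}\land\mathcal{Z})\lor(\mathcal{Y}\land\mathcal{Z})$-measurable for all $Z\in\mathcal{X}\lor\mathcal{Y}$, hence for $Z\in\mathcal{Z}$, where $\mathbbm{1}_{Z}=\mathbb{P}[Z\vert\mathcal{Z}]$ a.s. Your only addition is making explicit, via the tower property, the equivalence of the measurability hypothesis with $\mathbb{P}[X\vert\mathcal{Z}]=\mathbb{P}[X\vert\mathcal{X}\land\mathcal{Z}]$ a.s., which the paper uses silently.
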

%
%
\begin{remark}
Dropping, ceteris paribus, the condition that $\mathcal{X}\perp\!\!
\!\!\perp\mathcal{Y}$, then \ref{complementII:i} no longer implies
\ref{complementII:ii} (because one can have $\mathcal{Z}\subset
\mathcal{X}$ or $\mathcal{Z}\subset\mathcal{Y}$, without $
\mathcal{X}$ and $\mathcal{Y}$ being conditionally independent\index{conditional independence} given
$\mathcal{Z}$); however, \ref{complementII:ii} still implies
\ref{complementII:i} (this will be clear from the proof, and at any rate
Proposition~\ref{proposition:distIV} will provide a more general
statement, that will subsume this implication as a special case).\vadjust{\goodbreak}
\end{remark}
%
%
\begin{examples}
\leavevmode
\begin{enumerate}[(a)]%
\item
\label{complII:a}
The situation described by \ref{complementII:i}, equivalently
\ref{complementII:ii} is not trivial. For instance if $\mathcal{A},
\mathcal{B},\mathcal{C},\mathcal{D}$ are independent members of
$\varLambda$, then one can take $\mathcal{X}=\mathcal{A}+\mathcal{B}$,
$\mathcal{Y}=\mathcal{C}+\mathcal{D}$, $\mathcal{Z}=\mathcal{B}+
\mathcal{C}$. Of course in this case $\mathcal{Z}=(\mathcal{X}\land
\mathcal{Z}) \lor(\mathcal{Y}\land\mathcal{Z})$ can be seen (slightly
indirectly) from Proposition~\ref{distributivity} as much as (directly)
from the validity of \ref{complementII:ii}.
\item
\label{complII:b}
But there are cases when Proposition~\ref{distributivity} does not apply
(or applies only (very) indirectly), while
Proposition~\ref{proposition:complementsII} does. A trivial example of
this is when $\mathcal{Z}\subset\mathcal{X}$ or $\mathcal{Z}\subset
\mathcal{Y}$.
\item
For a less trivial example of the situation described in
\ref{complII:b} let $\xi_{i}$, $i\in\{1,2,3,4\}$, be independent
equiprobable signs.\index{independent ! equiprobable signs} Let $\mathcal{X}=\overline{\sigma}(\xi_{1},\{
\xi
_{1}=\xi_{2}=1\})$, $\mathcal{Y}=\overline{\sigma}(\xi_{3},\{\xi_{3}=
\xi_{4}=1\})$ and $\mathcal{Z}=\overline{\sigma}(\xi_{1},\xi
_{3})$. In
this case, unlike in \ref{complII:a}, it is not the case that
$\mathcal{Z}\land\mathcal{X}=\overline{\sigma}(\xi_{1})$ would have
a complement in $\mathcal{X}$ and $\mathcal{Z}\land\mathcal
{Y}=\overline{
\sigma}(\xi_{3})$ would have a complement in $\mathcal{Y}$. For this
reason Proposition~\ref{distributivity} cannot be (indirectly) applied
to deduce $(\mathcal{X}\land\mathcal{Z})\lor(\mathcal{Y}\land
\mathcal{Z})=\mathcal{Z}$. Yet this equality does prevail and can indeed
be seen directly and a priori from the validity of~\ref{complementII:ii}.
\end{enumerate}
\end{examples}
\begin{proof}
Suppose \ref{complementII:i} hods true. Let $X\in\mathcal{X}$ and
$Y\in\mathcal{Y}$. Then because $\mathcal{X}\perp\!\!\!\!\perp
\mathcal{Y}$, a.s. $\mathbb{P}[X\cap Y\vert\mathcal{Z}]=\mathbb{P}[X
\cap Y\vert(\mathcal{X}\land\mathcal{Z})\lor(\mathcal{Y}\land
\mathcal{Z})]=\mathbb{P}[X\vert\mathcal{X}\land\mathcal{Z}]
\mathbb{P}[Y\vert\mathcal{Y}\land\mathcal{Z}]$. Taking $Y=\varOmega$ and
$X=\varOmega$ shows that $\mathbb{P}[X\vert\mathcal{Z}]=\mathbb{P}[X
\vert\mathcal{X}\land\mathcal{Z}]$ a.s. and $\mathbb{P}[Y\vert
\mathcal{Y}\land\mathcal{Z}]=\mathbb{P}[Y\vert\mathcal{Z}]$ a.s.,
which concludes the argument. Conversely, suppose that
\ref{complementII:ii} holds true. Let $X\in\mathcal{X}$ and
$Y\in\mathcal{Y}$. Then a.s. $\mathbb{P}[X\cap Y\vert\mathcal{Z}]=
\mathbb{P}[X\vert\mathcal{Z}]\mathbb{P}[Y\vert\mathcal{Z}]$ and
$\mathbb{P}[X\vert\mathcal{Z}]=\mathbb{P}[X\vert\mathcal{X}\land
\mathcal{Z}]$, $\mathbb{P}[Y\vert\mathcal{Z}]=\mathbb{P}[Y\vert
\mathcal{Y}\land\mathcal{Z}]$. Hence $\mathbb{P}[X\cap Y\vert
\mathcal{Z}]\in((\mathcal{X}\land\mathcal{Z})\lor(\mathcal{Y}
\land\mathcal{Z}))/\mathcal{B}_{[-\infty,\infty]}$. A~$\pi
/\lambda
$-argument allows to conclude that $\mathbb{P}[Z\vert\mathcal{Z}]
\in((\mathcal{X}\land\mathcal{Z})\lor(\mathcal{Y}\land\mathcal{Z}))/
\mathcal{B}_{[-\infty,\infty]}$ for all $Z\in\mathcal{X}\lor
\mathcal{Y}$ and therefore, because $\mathcal{Z}\subset\mathcal{X}
\lor\mathcal{Y}$, for all $Z\in\mathcal{Z}$. Thus $\mathcal{Z}
\subset(\mathcal{X}\land\mathcal{Z})\lor(\mathcal{Y}\land
\mathcal{Z})$, while the reverse inclusion is trivial.
\end{proof}
More generally (in the sufficiency part):
%
%
\begin{proposition}[Distributivity III\index{distributivity}]
\label{proposition:distIII}
Let $(\mathcal{X}_{\alpha})_{\alpha\in\mathfrak{A}}$ be a family in
$\varLambda$ consisting of independent $\sigma$-fields. Then
\begin{equation*}
(\lor_{\alpha\in\mathfrak{A}}\mathcal{X}_{\alpha})\land \mathcal{Z}=
\lor_{\alpha\in\mathfrak{A}}(\mathcal{X}_{\alpha} \land\mathcal{Z})
\end{equation*}
provided (i) the $\mathcal{X}_{\alpha}$, $\alpha\in\mathfrak{A}$, are
conditionally independent\index{conditional independence} given $\mathcal{Z}$ and (ii) $\mathbb{P}[X
_{\alpha}\vert\mathcal{Z}]\in\mathcal{X}_{\alpha}/\mathcal{B}_{[-
\infty,\infty]}$ for all $X_{\alpha}\in\mathcal{X}_{\alpha}$,
$\alpha\in\mathfrak{A}$.
\end{proposition}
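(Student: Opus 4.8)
The plan is to prove the nontrivial inclusion $(\lor_{\alpha\in\mathfrak{A}}\mathcal{X}_{\alpha})\land\mathcal{Z}\subset\lor_{\alpha\in\mathfrak{A}}(\mathcal{X}_{\alpha}\land\mathcal{Z})$, the reverse one being immediate since each $\mathcal{X}_{\alpha}\land\mathcal{Z}$ sits inside both $\lor_{\beta}\mathcal{X}_{\beta}$ and $\mathcal{Z}$, hence inside their intersection. Writing $\mathcal{W}:=\lor_{\alpha\in\mathfrak{A}}(\mathcal{X}_{\alpha}\land\mathcal{Z})$, I would follow the sufficiency direction of Proposition~\ref{proposition:complementsII} and show that the operator $\mathbb{P}[\,\cdot\,\vert\mathcal{Z}]$ sends indicators of members of $\lor_{\alpha}\mathcal{X}_{\alpha}$ into $\mathcal{W}/\mathcal{B}_{[-\infty,\infty]}$. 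Granting this, any $Z\in(\lor_{\alpha}\mathcal{X}_{\alpha})\land\mathcal{Z}$ satisfies $\mathbbm{1}_{Z}=\mathbb{P}[Z\vert\mathcal{Z}]$ a.s. (because $Z\in\mathcal{Z}$), while the right-hand side is $\mathcal{W}$-measurable (because $Z\in\lor_{\alpha}\mathcal{X}_{\alpha}$), whence $Z\in\mathcal{W}$ by completeness of $\mathcal{W}$.

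The engine is a $\pi/\lambda$-argument built on the $\pi$-system $\mathcal{P}$ of finite intersections $\cap_{\alpha\in F}X_{\alpha}$ (with $F\subset\mathfrak{A}$ finite and $X_{\alpha}\in\mathcal{X}_{\alpha}$), which generates $\sigma(\cup_{\alpha}\mathcal{X}_{\alpha})$. For such a set, the conditional independence given $\mathcal{Z}$ of hypothesis~(i) factorizes $\mathbb{P}[\cap_{\alpha\in F}X_{\alpha}\vert\mathcal{Z}]=\prod_{\alpha\in F}\mathbb{P}[X_{\alpha}\vert\mathcal{Z}]$ a.s. I would then check that $\mathcal{D}:=\{D\in\mathcal{M}:\mathbb{P}[D\vert\mathcal{Z}]\in\mathcal{W}/\mathcal{B}_{[-\infty,\infty]}\}$ is a $\lambda$-system: it contains $\varOmega$, is closed under proper differences by linearity of $\mathbb{P}[\,\cdot\,\vert\mathcal{Z}]$, and is closed under increasing limits by conditional monotone convergence (here completeness guarantees the a.s.\ limit of $\mathcal{W}$-measurable functions stays $\mathcal{W}$-measurable). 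Hence $\mathcal{D}\supset\sigma(\cup_{\alpha}\mathcal{X}_{\alpha})$, and after absorbing negligible sets $\mathcal{D}\supset\overline{\sigma}(\cup_{\alpha}\mathcal{X}_{\alpha})=\lor_{\alpha}\mathcal{X}_{\alpha}$.

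The crux — and the one genuinely delicate point — is to upgrade hypothesis~(ii) from ``$\mathbb{P}[X_{\alpha}\vert\mathcal{Z}]$ is $\mathcal{X}_{\alpha}$-measurable'' to ``$\mathbb{P}[X_{\alpha}\vert\mathcal{Z}]$ is $(\mathcal{X}_{\alpha}\land\mathcal{Z})$-measurable'', so that each factor above, and therefore their finite product, lands in $\mathcal{W}/\mathcal{B}_{[-\infty,\infty]}$. The canonical version $g$ of $\mathbb{P}[X_{\alpha}\vert\mathcal{Z}]$ is $\mathcal{Z}$-measurable by construction; by~(ii) it agrees a.s.\ with some $\mathcal{X}_{\alpha}$-measurable $h$. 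The set $\{g\neq h\}$ lies in $\mathcal{M}$ and is $\mathbb{P}$-null, hence belongs to $0_{\varLambda}\subset\mathcal{X}_{\alpha}\cap\mathcal{Z}$. Consequently each level set $\{g\leq t\}\in\mathcal{Z}$ differs from $\{h\leq t\}\in\mathcal{X}_{\alpha}$ by a $\mathbb{P}$-null member of $\mathcal{M}$, i.e.\ by an element of $0_{\varLambda}\subset\mathcal{X}_{\alpha}$, which forces $\{g\leq t\}\in\mathcal{X}_{\alpha}$ and thus $\{g\leq t\}\in\mathcal{X}_{\alpha}\cap\mathcal{Z}=\mathcal{X}_{\alpha}\land\mathcal{Z}$. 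Therefore $g$ is $(\mathcal{X}_{\alpha}\land\mathcal{Z})$-measurable, seeding the $\pi/\lambda$-argument.

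I expect this completeness-driven upgrade of~(ii) to be the main obstacle, since it is precisely the place where the hypothesis that every $\sigma$-field in $\varLambda$ contains $0_{\varLambda}$ is indispensable; without it one would only recover the statement ``mod $\mathbb{P}$''. It is worth noting that the ambient unconditional independence of the $\mathcal{X}_{\alpha}$ is not actually invoked anywhere: the entire argument is carried by~(i), by~(ii), and by completeness.
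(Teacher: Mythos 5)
Your proof is correct and takes essentially the same route as the paper's: the completeness-based upgrade of hypothesis~(ii) to $(\mathcal{X}_{\alpha}\land\mathcal{Z})$-measurability of $\mathbb{P}[X_{\alpha}\vert\mathcal{Z}]$, the factorization of $\mathbb{P}[\cap_{\alpha\in F}X_{\alpha}\vert\mathcal{Z}]$ over the generating $\pi$-system via~(i), and the concluding $\pi/\lambda$-plus-completeness step are all exactly the paper's mechanism, and the paper's proof likewise never invokes the unconditional independence of the $\mathcal{X}_{\alpha}$, confirming your closing observation. The only (minor) difference is organizational: the paper first reduces to the case $\mathcal{Z}\subset\lor_{\alpha\in\mathfrak{A}}\mathcal{X}_{\alpha}$ by replacing $\mathcal{Z}$ with $\mathcal{Z}\land(\lor_{\alpha\in\mathfrak{A}}\mathcal{X}_{\alpha})$, a detour you avoid by exploiting directly that any $Z$ in the left-hand side lies in both $\mathcal{Z}$ and $\lor_{\alpha\in\mathfrak{A}}\mathcal{X}_{\alpha}$.
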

\begin{proof}
Set $\mathcal{X}:=\lor_{\alpha\in\mathfrak{A}}\mathcal{X}_{\alpha}$.
Condition (ii) entails that a.s. $\mathbb{P}[X_{\alpha}\vert
\mathcal{X}\land\mathcal{Z}]=\mathbb{P}[X_{\alpha}\vert\mathcal{X}
_{\alpha}\land\mathcal{Z}]=\mathbb{P}[X_{\alpha}\vert\mathcal{Z}]$
for all $\alpha\in\mathfrak{A}$; combining this with (i) shows via a
$\pi/\lambda$-argument that a.s. $\mathbb{P}[X\vert\mathcal{X}
\land\mathcal{Z}]=\mathbb{P}[X\vert\mathcal{Z}]$ for all $X\in
\mathcal{X}$: if $B$ is a finite non-empty subset of $\mathfrak{A}$,
then a.s. $\mathbb{P}[\cap_{\beta\in B}X_{\beta}\vert\mathcal{Z}]=
\prod_{\beta\in B}\mathbb{P}[X_{\beta}\vert\mathcal{Z}]=
\prod_{\beta\in B}\mathbb{P}[X_{\beta}\vert\mathcal{X}\land
\mathcal{Z}]\in(\mathcal{X}\land\mathcal{Z})/\mathcal
{B}_{[-\infty
,\infty]}$. Replacing $\mathcal{Z}$ by $\mathcal{Z}\land\mathcal{X}$
if necessary, we may and do assume $\mathcal{Z}\subset\mathcal{X}$.
Then $\lor_{\alpha\in\mathfrak{A}}(\mathcal{X}_{\alpha}\land
\mathcal{Z})\subset\mathcal{Z}=\mathcal{X}\land\mathcal{Z}$ is
trivial. For the reverse inclusion, let $B$ be a finite non-empty subset
of $\mathfrak{A}$, and let $X_{\beta}\in\mathcal{X}_{\beta}$ for
$\beta\in B$. Then a.s. $\mathbb{P}[\cap_{\beta\in B}X_{\beta}
\vert\mathcal{Z}]=\prod_{\beta\in B}\mathbb{P}[X_{\beta}\vert
\mathcal{Z}]=\prod_{\beta\in B}\mathbb{P}[X_{\beta}\vert
\mathcal{X}_{\beta}\land\mathcal{Z}]\in(\lor_{\alpha\in
\mathfrak{A}}(\mathcal{X}_{\alpha}\land\mathcal{Z}))/\mathcal{B}
_{[-\infty,\infty]}$. By  a \mbox{$\pi/\lambda$-argument} we conclude that
$\mathbb{P}[Z\vert\mathcal{Z}]\in(\lor_{\alpha\in\mathfrak{A}}(
\mathcal{X}_{\alpha}\land\mathcal{Z}))/\mathcal{B}_{[-\infty,
\infty]}$ for all $Z\in\lor_{\alpha\in\mathfrak{A}}\mathcal{X}
_{\alpha}$, and therefore for all $Z\in\mathcal{Z}$. It means that
also $\mathcal{X}\land\mathcal{Z}=\mathcal{Z}\subset
\lor_{\alpha\in\mathfrak{A}}(\mathcal{X}_{\alpha}\land\mathcal{Z})$.
\end{proof}
Parallel to Proposition~\ref{proposition:distIII} we have:

%
\begin{proposition}[Distributivity IV\index{distributivity}]
\label{proposition:distIV}
Let $(\mathcal{X}_{\alpha})_{\alpha\in\mathfrak{A}}$ be a family in
$\varLambda$, with $\mathfrak{A}$ containing at least two elements,
consisting of $\sigma$-fields that are conditionally independent\index{conditional independence}
given $\mathcal{Z}\in\varLambda$. Then
\begin{equation*}
\mathcal{Z}=\land_{\alpha\in\mathfrak{A}}(\mathcal{X}_{\alpha} \lor
\mathcal{Z});
\end{equation*}
in particular $\land_{\alpha\in\mathfrak{A}}\mathcal{X}_{\alpha}
\subset\mathcal{Z}$.
\end{proposition}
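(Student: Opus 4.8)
The inclusion $\mathcal{Z}\subset\land_{\alpha\in\mathfrak{A}}(\mathcal{X}_{\alpha}\lor\mathcal{Z})$ is immediate (each member of the intersection contains $\mathcal{Z}$), so the entire content of the statement lies in the reverse inclusion. The plan is to fix $F\in\land_{\alpha\in\mathfrak{A}}(\mathcal{X}_{\alpha}\lor\mathcal{Z})$ and to show that $\mathbbm{1}_{F}=\mathbb{P}[F\vert\mathcal{Z}]$ a.s.; since $\mathcal{Z}$ is complete, this places $F$ in $\mathcal{Z}$ and finishes the argument. The mechanism will exploit that $F$ lies simultaneously in \emph{two} different members $\mathcal{X}_{\alpha}\lor\mathcal{Z}$ of the family, which is exactly where the hypothesis that $\mathfrak{A}$ has at least two elements enters.

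Concretely, I would pick two distinct indices $\alpha_{1},\alpha_{2}\in\mathfrak{A}$. Mutual conditional independence of the family given $\mathcal{Z}$ yields in particular $\mathcal{X}_{\alpha_{1}}\perp\!\!\!\!\perp_{\mathcal{Z}}\mathcal{X}_{\alpha_{2}}$. The key reduction is to upgrade this to $\mathcal{X}_{\alpha_{1}}\lor\mathcal{Z}\perp\!\!\!\!\perp_{\mathcal{Z}}\mathcal{X}_{\alpha_{2}}\lor\mathcal{Z}$: adjoining the conditioning $\sigma$-field $\mathcal{Z}$ to both sides preserves conditional independence, as one checks by a $\pi/\lambda$-argument on the $\pi$-systems $\{A\cap Z:A\in\mathcal{X}_{\alpha_{i}},\,Z\in\mathcal{Z}\}$, pulling the $\mathcal{Z}$-measurable indicators out of the relevant conditional expectations. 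Since $F\in\mathcal{X}_{\alpha_{2}}\lor\mathcal{Z}$, restricting to the sub-$\sigma$-fields $\sigma(\mathbbm{1}_{F})\subset\mathcal{X}_{\alpha_{2}}\lor\mathcal{Z}$ and $\mathcal{X}_{\alpha_{1}}\subset\mathcal{X}_{\alpha_{1}}\lor\mathcal{Z}$ then delivers $\sigma(\mathbbm{1}_{F})\perp\!\!\!\!\perp_{\mathcal{Z}}\mathcal{X}_{\alpha_{1}}$.

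With this in hand I would invoke the final claim of Lemma~\ref{lema} (independent conditioning), with $\mathsf{F}=\mathbbm{1}_{F}$, the conditioning field $\mathcal{Z}$ playing the role of ``$\mathcal{X}$'', and $\mathcal{X}_{\alpha_{1}}$ that of ``$\mathcal{Y}$'': it produces $\mathbb{E}[\mathbbm{1}_{F}\vert\mathcal{Z}\lor\mathcal{X}_{\alpha_{1}}]=\mathbb{E}[\mathbbm{1}_{F}\vert\mathcal{Z}]$ a.s. But $F\in\mathcal{X}_{\alpha_{1}}\lor\mathcal{Z}=\mathcal{Z}\lor\mathcal{X}_{\alpha_{1}}$, so the left-hand side is simply $\mathbbm{1}_{F}$; hence $\mathbbm{1}_{F}=\mathbb{P}[F\vert\mathcal{Z}]$ a.s. and $F\in\mathcal{Z}$, as desired. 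The ``in particular'' assertion then comes for free, since $\mathcal{X}_{\alpha}\subset\mathcal{X}_{\alpha}\lor\mathcal{Z}$ gives $\land_{\alpha\in\mathfrak{A}}\mathcal{X}_{\alpha}\subset\land_{\alpha\in\mathfrak{A}}(\mathcal{X}_{\alpha}\lor\mathcal{Z})=\mathcal{Z}$.

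The main obstacle --- really the only nonroutine point --- is the upgrade of pairwise conditional independence to the statement with $\mathcal{Z}$ adjoined on both sides; everything afterwards is the standard ``drop an independent conditioning field'' step already packaged in Lemma~\ref{lema}. It is worth stressing that the argument uses only two indices, so pairwise conditional independence given $\mathcal{Z}$ is all the hypothesis one actually needs, and that the assumption $|\mathfrak{A}|\geq 2$ is indispensable: for a singleton family the intersection collapses to $\mathcal{X}_{\alpha}\lor\mathcal{Z}$, which is in general strictly larger than $\mathcal{Z}$.
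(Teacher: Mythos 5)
Your proof is correct and follows essentially the same route as the paper's: both reduce to two indices, use that $\mathbbm{1}_{F}$ is fixed under conditioning on $\mathcal{X}_{\alpha_{1}}\lor\mathcal{Z}$, and invoke conditional independence to collapse that conditional expectation down to $\mathbb{P}[F\vert\mathcal{Z}]$, whence $F\in\mathcal{Z}$ by completeness. The only cosmetic difference is that you package the $\pi/\lambda$ work into an upgrade of conditional independence to the joined $\sigma$-fields followed by an appeal to Lemma~\ref{lema}, whereas the paper performs the equivalent $\pi/\lambda$-arguments directly on the conditional expectations of generators $X_{2}\cap Z$.
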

%
%
\begin{remark}
The converse is not true, because, for instance, one can have
$\mathcal{X}$ and $\mathcal{Y}$ dependent with $\mathcal{X}\land
\mathcal{Y}=0_{\varLambda}$ (then $\mathcal{Z}=(\mathcal{X}\lor
\mathcal{Z})\land(\mathcal{Y}\lor\mathcal{Z})$ for $\mathcal{Z}=0_{
\varLambda}$, but $\mathcal{X}$ and $\mathcal{Y}$ are not independent
given $\mathcal{Z}$) -- see Example~\ref{ex:commute}. The condition on
the conditional independence\index{conditional independence} of course cannot be dropped, not even if
the $\mathcal{X}_{\alpha}$, $\alpha\in\mathfrak{A}$, and
$\mathcal{Z}$ are pairwise independent\index{pairwise independent} -- see
Example~\ref{example:dist}\ref{dist:basic}.
\end{remark}
%
%
\begin{remark}
By Proposition~\ref{distributivity} the equality
\begin{equation*}
(\land_{\alpha\in\mathfrak{A}}\mathcal{X}_{\alpha})\lor \mathcal{Z}=
\land_{\alpha\in\mathfrak{A}}(\mathcal{X}_{\alpha} \lor\mathcal{Z})
\end{equation*}
also prevails when the $\mathcal{X}_{\alpha}$, $\alpha\in
\mathfrak{A}$, are independent of $\mathcal{Z}$, however the scope of
this result is clearly different from that of
Proposition~\ref{proposition:distIV}.
\end{remark}

\begin{proof}
It is clear that $\mathcal{Z}\subset\land_{\alpha\in\mathfrak{A}}(
\mathcal{X}_{\alpha}\lor\mathcal{Z})$. For the reverse inclusion we
may assume $\mathfrak{A}=\{1,2\}$. Let $F\in(\mathcal{X}_{1}\lor
\mathcal{Z})\land(\mathcal{X}_{2}\lor\mathcal{Z})$. Then a.s.
$\mathbbm{1}_{F}=\mathbb{P}[F\vert\mathcal{X}_{1}\lor\mathcal{Z}]$
(because $F\in\mathcal{X}_{1}\lor\mathcal{Z}$). Let us now show that
if $F\in\mathcal{X}_{2}\lor\mathcal{Z}$, then $\mathbb{P}[F\vert
\mathcal{X}_{1}\lor\mathcal{Z}]\in\mathcal{Z}/\mathcal
{B}_{[-\infty
,\infty]}$; this will conclude the argument. Take $X_{2}\in
\mathcal{X}_{2}$ and $Z\in\mathcal{Z}$. Then a.s. $\mathbb{P}[X_{2}
\cap Z\vert\mathcal{X}_{1}\lor\mathcal{Z}]=\mathbbm{1}_{Z}
\mathbb{P}[X_{2}\vert\mathcal{X}_{1}\lor\mathcal{Z}]$. Thus by a
$\pi/\lambda$-argument it will suffice to establish that $
\mathbb{P}[X_{2}\vert\mathcal{X}_{1}\lor\mathcal{Z}]\in\mathcal{Z}/
\mathcal{B}_{[-\infty,\infty]}$. For this, just argue that a.s.
$\mathbb{P}[X_{2}\vert\mathcal{X}_{1}\lor\mathcal{Z}]=\mathbb{P}[X
_{2}\vert\mathcal{Z}]$: let $X_{1}\in\mathcal{X}_{1}$ and
$Z\in\mathcal{Z}$; then $\mathbb{P}(X_{2}\cap X_{1}\cap Z)=
\mathbb{E}[\mathbb{P}[X_{2}\vert\mathcal{Z}];X_{1}\cap Z]$ because
$\mathcal{X}_{1}$ is conditionally independent\index{conditional independence} of $\mathcal{X}_{2}$
given $\mathcal{Z}$; another $\pi/\lambda$-argument allows to
conclude.
\end{proof}

A further substantial statement involving conditional independence\index{conditional independence} and
distributivity\index{distributivity} is the following. It generalizes
Proposition~\ref{distributivity} in the case when $\mathfrak{B}$ is a
two-point set.
%
%
\begin{proposition}[Distributivity\index{distributivity} V]
\label{prop:distV}
Let $(\mathcal{X}_{\alpha i})_{(\alpha,i)\in\mathfrak{A}\times\{1,2
\}}$ be a family in $\varLambda$, $\mathfrak{A}$ non-empty. Set
$\mathcal{X}_{i}:=\lor_{\alpha\in\mathfrak{A}} \mathcal
{X}_{\alpha
i}$ for $i\in\{1,2\}$. Assume that for each finite non-empty
$A\subset\mathfrak{A}$, $\mathcal{X}_{1}$ is conditionally independent\index{conditional independence}
of $\mathcal{X}_{2}$ given $\land_{\alpha\in A}\mathcal{X}_{\alpha1}$
and also given $\land_{\alpha\in A}\mathcal{X}_{\alpha2}$. Then
%
%
\begin{equation}
\label{eq:dist-V} \land_{\alpha\in\mathfrak{A}}(\mathcal{X}_{\alpha1}\lor
\mathcal{X}_{\alpha2})=(\land_{\alpha\in\mathfrak{A}}\mathcal{X}
_{\alpha1})\lor(\land_{\alpha\in\mathfrak{A}}\mathcal{X}_{\alpha
2}) .
\end{equation}
\end{proposition}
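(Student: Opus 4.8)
The plan is to prove the nontrivial inclusion $\subset$ in \eqref{eq:dist-V}; the reverse is immediate, since for each fixed $\alpha_0$ and each $i$ we have $\land_{\alpha\in\mathfrak{A}}\mathcal{X}_{\alpha i}\subset\mathcal{X}_{\alpha_0 i}\subset\mathcal{X}_{\alpha_0 1}\lor\mathcal{X}_{\alpha_0 2}$, whence $\land_{\alpha\in\mathfrak{A}}\mathcal{X}_{\alpha i}\subset\land_{\alpha\in\mathfrak{A}}(\mathcal{X}_{\alpha1}\lor\mathcal{X}_{\alpha2})$, and one takes the join over $i\in\{1,2\}$. The argument has three stages: the case $|\mathfrak{A}|=2$, handled by iterating conditional expectations; the passage to general finite $\mathfrak{A}$ by induction; and the passage to arbitrary $\mathfrak{A}$ by decreasing martingale convergence along the net of finite subsets. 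Throughout set $\mathcal{A}_i:=\land_{\alpha\in\mathfrak{A}}\mathcal{X}_{\alpha i}$ and, for finite non-empty $A\subset\mathfrak{A}$, $\mathcal{B}_i^A:=\land_{\alpha\in A}\mathcal{X}_{\alpha i}$ ($i\in\{1,2\}$).

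First I would record the following consequence of the hypothesis, used repeatedly. Whenever $\mathcal{U}\subset\mathcal{X}_1$ and $\mathcal{V}\subset\mathcal{X}_2$ satisfy $\mathcal{X}_1\perp\!\!\!\!\perp_{\mathcal{U}}\mathcal{X}_2$ and $\mathcal{X}_1\perp\!\!\!\!\perp_{\mathcal{V}}\mathcal{X}_2$, then in fact $\mathcal{X}_1\perp\!\!\!\!\perp_{\mathcal{U}\lor\mathcal{V}}\mathcal{X}_2$ and moreover $\mathbb{P}[X_1\cap X_2\vert\mathcal{U}\lor\mathcal{V}]=\mathbb{P}[X_1\vert\mathcal{U}]\mathbb{P}[X_2\vert\mathcal{V}]$ a.s. for $X_1\in\mathcal{X}_1$, $X_2\in\mathcal{X}_2$. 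Indeed, using $\mathcal{U}\subset\mathcal{X}_1$, then $\mathcal{X}_1\perp\!\!\!\!\perp_{\mathcal{V}}\mathcal{X}_2$, then $\mathcal{U}\perp\!\!\!\!\perp_{\mathcal{V}}\mathcal{X}_2$ (inherited from $\mathcal{X}_1\perp\!\!\!\!\perp_{\mathcal{V}}\mathcal{X}_2$ by restriction to sub-$\sigma$-fields), one has $\mathbb{E}[\mathbbm{1}_{X_2}\vert\mathcal{X}_1\lor\mathcal{U}\lor\mathcal{V}]=\mathbb{E}[\mathbbm{1}_{X_2}\vert\mathcal{X}_1\lor\mathcal{V}]=\mathbb{E}[\mathbbm{1}_{X_2}\vert\mathcal{V}]=\mathbb{E}[\mathbbm{1}_{X_2}\vert\mathcal{U}\lor\mathcal{V}]$, which is the asserted conditional independence; the factorization then follows from it together with two further applications of Lemma~\ref{lema} (to drop $\mathcal{V}$ from $\mathbb{P}[X_1\vert\mathcal{U}\lor\mathcal{V}]$ and $\mathcal{U}$ from $\mathbb{P}[X_2\vert\mathcal{U}\lor\mathcal{V}]$). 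Taking $\mathcal{U}=\mathcal{B}_1^A$, $\mathcal{V}=\mathcal{B}_2^A$ (the hypothesis supplies $\mathcal{X}_1\perp\!\!\!\!\perp_{\mathcal{B}_i^A}\mathcal{X}_2$) gives $\mathbb{P}[X_1\cap X_2\vert\mathcal{B}_1^A\lor\mathcal{B}_2^A]=\mathbb{P}[X_1\vert\mathcal{B}_1^A]\mathbb{P}[X_2\vert\mathcal{B}_2^A]$.

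The crux is the case $\mathfrak{A}=\{1,2\}$, where one must show $(\mathcal{X}_{11}\lor\mathcal{X}_{12})\land(\mathcal{X}_{21}\lor\mathcal{X}_{22})\subset(\mathcal{X}_{11}\land\mathcal{X}_{21})\lor(\mathcal{X}_{12}\land\mathcal{X}_{22})=:\mathcal{R}$. Put $\mathcal{S}:=\mathcal{X}_{11}\lor\mathcal{X}_{12}$, $\mathcal{T}:=\mathcal{X}_{21}\lor\mathcal{X}_{22}$, so the left side is $\mathcal{S}\land\mathcal{T}$. Since the sets $X_1\cap X_2$ ($X_1\in\mathcal{X}_1$, $X_2\in\mathcal{X}_2$) form a $\pi$-system generating $\mathcal{X}_1\lor\mathcal{X}_2\supset\mathcal{S}\land\mathcal{T}$, it suffices, by a $\pi/\lambda$-argument exactly as in the proof of Proposition~\ref{distributivity}, to show $\mathbb{P}[X_1\cap X_2\vert\mathcal{S}\land\mathcal{T}]\in\mathcal{R}/\mathcal{B}_{[-\infty,\infty]}$; for then $\mathbb{P}[\cdot\vert\mathcal{S}\land\mathcal{T}]$ maps into $\mathcal{R}$, and, $\mathcal{R}$ being complete, any $F\in\mathcal{S}\land\mathcal{T}$ satisfies $\mathbbm{1}_F=\mathbb{P}[F\vert\mathcal{S}\land\mathcal{T}]\in\mathcal{R}/\mathcal{B}_{[-\infty,\infty]}$. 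I would compute $\mathbb{P}[X_1\cap X_2\vert\mathcal{S}\land\mathcal{T}]$ as the $L^1$-limit of the iterates $\mathbb{E}[\mathbbm{1}_{X_1\cap X_2}\vert\mathcal{S}\vert\mathcal{T}\vert\mathcal{S}\vert\cdots]$, convergent by \cite[Proposition~3]{burkholder}. The key observation is that these iterates \emph{decouple} across coordinates: beginning from $\mathbb{E}[\mathbbm{1}_{X_1\cap X_2}\vert\mathcal{S}]=\mathbb{P}[X_1\vert\mathcal{X}_{11}]\mathbb{P}[X_2\vert\mathcal{X}_{12}]$ (the preliminary step), and using at each stage $\mathcal{X}_1\perp\!\!\!\!\perp_{\mathcal{S}}\mathcal{X}_2$ and $\mathcal{X}_1\perp\!\!\!\!\perp_{\mathcal{T}}\mathcal{X}_2$ (to factor the conditioning of a product of an $\mathcal{X}_1$- and an $\mathcal{X}_2$-measurable factor) together with $\mathcal{X}_1\perp\!\!\!\!\perp_{\mathcal{X}_{\alpha i}}\mathcal{X}_2$ (to reduce each factor's conditioning to the relevant single $\mathcal{X}_{\alpha i}$), one gets inductively that the $k$-th iterate equals $U_kV_k$, with $U_k$ the $k$-fold iterate of $\mathbbm{1}_{X_1}$ between $\mathcal{X}_{11}$ and $\mathcal{X}_{21}$ and $V_k$ that of $\mathbbm{1}_{X_2}$ between $\mathcal{X}_{12}$ and $\mathcal{X}_{22}$. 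A further application of \cite[Proposition~3]{burkholder} in each coordinate gives $U_k\to\mathbb{P}[X_1\vert\mathcal{X}_{11}\land\mathcal{X}_{21}]$ and $V_k\to\mathbb{P}[X_2\vert\mathcal{X}_{12}\land\mathcal{X}_{22}]$ in $L^1$; as the factors are bounded, $U_kV_k\to\mathbb{P}[X_1\vert\mathcal{X}_{11}\land\mathcal{X}_{21}]\mathbb{P}[X_2\vert\mathcal{X}_{12}\land\mathcal{X}_{22}]\in\mathcal{R}/\mathcal{B}_{[-\infty,\infty]}$, and equating the two limits gives the membership.

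For general finite $\mathfrak{A}$ I would induct on $|\mathfrak{A}|$: writing $\land_{\alpha\in\mathfrak{A}}(\mathcal{X}_{\alpha1}\lor\mathcal{X}_{\alpha2})=[\land_{\alpha\ne\alpha_0}(\mathcal{X}_{\alpha1}\lor\mathcal{X}_{\alpha2})]\land(\mathcal{X}_{\alpha_01}\lor\mathcal{X}_{\alpha_02})$, rewriting the bracket by the inductive hypothesis as $\mathcal{C}_1\lor\mathcal{C}_2$ with $\mathcal{C}_i:=\land_{\alpha\ne\alpha_0}\mathcal{X}_{\alpha i}$, and applying the two-element case to the pairs $(\mathcal{C}_1,\mathcal{C}_2)$ and $(\mathcal{X}_{\alpha_01},\mathcal{X}_{\alpha_02})$; the conditional independences required there are inherited, by restriction to sub-$\sigma$-fields, from the hypotheses for the subsets $\mathfrak{A}\setminus\{\alpha_0\}$ and $\{\alpha_0\}$. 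It is essential to reduce two fields at a time in this way rather than iterate over all of $\mathfrak{A}$ at once, since iterated conditional expectations over three or more $\sigma$-fields need not converge to the intersection. Finally, for arbitrary $\mathfrak{A}$, let $T$ be the finite non-empty subsets of $\mathfrak{A}$ directed by inclusion; the finite case gives $\land_{\alpha\in\mathfrak{A}}(\mathcal{X}_{\alpha1}\lor\mathcal{X}_{\alpha2})=\land_{A\in T}(\mathcal{B}_1^A\lor\mathcal{B}_2^A)$, and by decreasing martingale convergence along $T$ (the fields $\mathcal{B}_1^A\lor\mathcal{B}_2^A$ being nonincreasing in $A$) together with the preliminary factorization and a second decreasing-martingale limit in each coordinate, $\mathbb{P}[X_1\cap X_2\vert\land_{A\in T}(\mathcal{B}_1^A\lor\mathcal{B}_2^A)]=\lim_{A\in T}\mathbb{P}[X_1\vert\mathcal{B}_1^A]\mathbb{P}[X_2\vert\mathcal{B}_2^A]=\mathbb{P}[X_1\vert\mathcal{A}_1]\mathbb{P}[X_2\vert\mathcal{A}_2]\in(\mathcal{A}_1\lor\mathcal{A}_2)/\mathcal{B}_{[-\infty,\infty]}$; a final $\pi/\lambda$-argument closes the proof. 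The main obstacle is the decoupling in the two-element case: making the bookkeeping of the iterated conditional expectations precise so that Burkholder's theorem applies separately in each coordinate, and organizing the induction two fields at a time so as to stay within the scope of that theorem.
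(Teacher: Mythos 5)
Your proof is correct, but its architecture differs from the paper's. The paper never treats the general two-element case head-on: exactly as at the start of the proof of Proposition~\ref{distributivity}, it observes that the nontrivial inclusion follows once \eqref{eq:dist-V} is known in two special cases, (A) $\mathcal{X}_{\alpha2}=\mathcal{X}_{2}$ for all $\alpha$, and (B) $\mathfrak{A}=\{1,2\}$ with $\mathcal{X}_{11}\subset\mathcal{X}_{21}$ and $\mathcal{X}_{22}\subset\mathcal{X}_{12}$; case (A) is then run as in item (a) of Proposition~\ref{distributivity} (Burkholder--Chow iteration with the fixed factor $\mathbbm{1}_{X_{2}}$ carried along, followed by the directed-net decreasing-martingale limit), while case (B) is soft, needing only a single conditional-expectation computation and a $\pi/\lambda$-argument, with conditional independence replacing independence throughout. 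You instead prove the full $2\times 2$ case directly, via the observation --- not present in the paper --- that the alternating Burkholder--Chow iterates between $\mathcal{X}_{11}\lor\mathcal{X}_{12}$ and $\mathcal{X}_{21}\lor\mathcal{X}_{22}$ decouple into products of coordinate-wise iterates, so that the two-field theorem can be invoked separately in each coordinate; you then induct over finite $\mathfrak{A}$ two fields at a time and finish with the same kind of net limit, the factorization $\mathbb{P}[X_{1}\cap X_{2}\vert\mathcal{B}_{1}^{A}\lor\mathcal{B}_{2}^{A}]=\mathbb{P}[X_{1}\vert\mathcal{B}_{1}^{A}]\mathbb{P}[X_{2}\vert\mathcal{B}_{2}^{A}]$ being where the conditioning-on-intersection hypotheses actually enter. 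What the paper's reduction buys is economy: the earlier proof is recycled nearly verbatim. What your route buys is a self-contained and more transparent mechanism, and as a by-product it shows the $2\times 2$ case needs only the singleton conditionings. Two small points to tidy: the decoupling requires your preliminary lemma for bounded nonnegative measurable factors, not just indicators (a routine monotone-class extension in the spirit of Lemma~\ref{lema}, but it should be stated, since from the second iteration onwards the inputs $U_{k}$, $V_{k}$ are no longer indicators); and the remark that reducing two fields at a time is ``essential'' is overstated --- by Halperin's extension of von Neumann's alternating-projection theorem, cyclic iterates over finitely many $\sigma$-fields still converge in $L^{2}$ to the conditional expectation given the intersection (it is the almost-sure convergence that is problematic) --- though nothing in your argument depends on this remark.
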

\begin{proof}
By decreasing martingale convergence,\index{martingale convergence} $\mathcal{X}_{1}$ is conditionally
independent\index{conditional independence} of $\mathcal{X}_{2}$ given $\land_{\alpha\in
\mathfrak{A}}\mathcal{X}_{\alpha1}$ and also given $
\land_{\alpha\in\mathfrak{A}}\mathcal{X}_{\alpha2}$. Therefore, by
the same reduction as at the start of the proof of
Proposition~\ref{distributivity}, it suffices to establish the claim in
the following two cases.
\begin{enumerate}[(A)]%
\item
\label{V:A}
$\mathcal{X}_{\alpha2}=\mathcal{X}_{2}$ for all $\alpha\in
\mathfrak{A}$.
\item
\label{V:B}
$\mathcal{A}=\{1,2\}$, $\mathcal{X}_{11}\subset\mathcal{X}_{21}$,
$\mathcal{X}_{22}\subset\mathcal{X}_{12}$.
\end{enumerate}

\ref{V:A}. Suppose \eqref{eq:dist-V} has been established for
$\mathfrak{A}$ finite (all the time assuming \ref{V:A}, of course).\vadjust{\goodbreak} Let
$A\subset\mathfrak{A}$ be finite and non-empty and $(X_{1},X_{2})
\in\mathcal{X}_{1}\times\mathcal{X}_{2}$. Then, because $
\mathcal{X}_{1} \perp\!\!\!\!
\perp_{\land_{\alpha\in A}\mathcal{X}_{\alpha1}} \mathcal{X}_{2}$,
a.s. $\mathbb{P}[X_{1}\cap X_{2}\vert(\land_{\alpha\in A}
\mathcal{X}_{\alpha1})\lor\mathcal{X}_{2}]=\mathbb{P}[X_{1}\vert
\land_{\alpha\in A}\mathcal{X}_{\alpha1}]\mathbbm{1}_{X_{2}}$. By
decreasing martingale convergence\index{martingale convergence} and the assumption made, it follows
that $\mathbb{P}[X_{1}\cap X_{2}\vert\land_{\alpha\in\mathfrak{A}}(
\mathcal{X}_{\alpha1}\lor\mathcal{X}_{2})]\in((
\land_{\alpha\in\mathfrak{A}}\mathcal{X}_{\alpha1})\lor
\mathcal{X}_{2})/\mathcal{B}_{[-\infty,\infty]}$, and we conclude as
usual. Then it remains to establish the claim for finite $\mathcal{A}$,
and by induction for $\mathcal{A}=\{1,2\}$. The remainder of the proof
is now the same as in the proof of item \ref{dist:a} of
Proposition~\ref{distributivity}, except that, as appropriate, one
appeals to conditional independence in lieu of independence.\index{conditional independence}

\ref{V:B}. This is proved just as in the final part of the proof of item
\ref{dist:b} of Proposition~\ref{distributivity} (only the final part
is relevant because here a priori $\mathcal{A}=\mathcal{B}=\{1,2\}$),
except that again one appeals to conditional independence in lieu of
independence,\index{conditional independence} as appropriate.
\end{proof}

%
\begin{corollary}[Distributivity VI\index{distributivity}]
\cite{lindvall1986}, \cite[Exercise 2.5(1)]{chaumont-yor}. If
$\mathcal{Y}\in\varLambda$ and a nonincreasing sequence $(\mathcal{X}
_{n})_{n\in\mathbb{N}}$ from $\varLambda$ are such that $\mathcal{Y}
\perp\!\!\!\!\perp_{\mathcal{X}_{n}}\mathcal{X}_{1}$ for all
$n\in\mathbb{N}$, then $\land_{n\in\mathbb{N}}(\mathcal{X}_{n}
\lor\mathcal{Y})=(\land_{n\in\mathbb{N}}\mathcal{X}_{n})\lor
\mathcal{Y}$.\qed
\end{corollary}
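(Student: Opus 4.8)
The plan is to obtain this as a direct specialization of Proposition~\ref{prop:distV}. First I would take $\mathfrak{A}:=\mathbb{N}$ and, for each $n\in\mathbb{N}$, set $\mathcal{X}_{n1}:=\mathcal{X}_{n}$ and $\mathcal{X}_{n2}:=\mathcal{Y}$. With this labelling the aggregated fields of Proposition~\ref{prop:distV} become $\mathcal{X}_{1}=\lor_{n\in\mathbb{N}}\mathcal{X}_{n1}=\lor_{n\in\mathbb{N}}\mathcal{X}_{n}=\mathcal{X}_{1}$ (the top of the nonincreasing family, since the $\mathcal{X}_{n}$ decrease) and $\mathcal{X}_{2}=\lor_{n\in\mathbb{N}}\mathcal{X}_{n2}=\mathcal{Y}$. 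Consequently the left-hand side of \eqref{eq:dist-V} reads $\land_{n\in\mathbb{N}}(\mathcal{X}_{n}\lor\mathcal{Y})$, while its right-hand side reads $(\land_{n\in\mathbb{N}}\mathcal{X}_{n})\lor(\land_{n\in\mathbb{N}}\mathcal{Y})=(\land_{n\in\mathbb{N}}\mathcal{X}_{n})\lor\mathcal{Y}$, because $\land_{n\in\mathbb{N}}\mathcal{Y}=\mathcal{Y}$. Thus \eqref{eq:dist-V} is, verbatim, the asserted identity, and it remains only to verify that the hypotheses of Proposition~\ref{prop:distV} are satisfied.

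Next I would check the two conditional-independence requirements for an arbitrary finite non-empty $A\subset\mathbb{N}$. Writing $m:=\max A$ and using the monotonicity of $(\mathcal{X}_{n})_{n\in\mathbb{N}}$, one has $\land_{\alpha\in A}\mathcal{X}_{\alpha1}=\land_{n\in A}\mathcal{X}_{n}=\mathcal{X}_{m}$; the relation to be checked, namely $\mathcal{X}_{1}\perp\!\!\!\!\perp_{\mathcal{X}_{m}}\mathcal{Y}$, is then precisely the standing hypothesis $\mathcal{Y}\perp\!\!\!\!\perp_{\mathcal{X}_{m}}\mathcal{X}_{1}$, by symmetry of conditional independence, and $m=\max A$ ranges over all of $\mathbb{N}$ as $A$ varies. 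For the other conditioning field, $\land_{\alpha\in A}\mathcal{X}_{\alpha2}=\land_{n\in A}\mathcal{Y}=\mathcal{Y}$, and the relation $\mathcal{X}_{1}\perp\!\!\!\!\perp_{\mathcal{Y}}\mathcal{Y}$ holds automatically: conditioning on a $\sigma$-field that already contains one of the two fields renders them conditionally independent, as for $X\in\mathcal{X}_{1}$ and $Y\in\mathcal{Y}$ one has a.s. $\mathbb{P}[X\cap Y\vert\mathcal{Y}]=\mathbbm{1}_{Y}\,\mathbb{P}[X\vert\mathcal{Y}]=\mathbb{P}[X\vert\mathcal{Y}]\,\mathbb{P}[Y\vert\mathcal{Y}]$.

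With both hypotheses in hand, Proposition~\ref{prop:distV} applies directly and yields the claimed equality. There is essentially no hard step: the whole content of the corollary is the correct bookkeeping of the specialization $\mathcal{X}_{n1}=\mathcal{X}_{n}$, $\mathcal{X}_{n2}=\mathcal{Y}$, together with two easy observations. The first is that the second conditional-independence condition of Proposition~\ref{prop:distV} is vacuous here because $\land_{\alpha\in A}\mathcal{X}_{\alpha2}$ collapses to $\mathcal{Y}$ itself. The second — the only mildly substantive point worth flagging — is the use of monotonicity of $(\mathcal{X}_{n})_{n\in\mathbb{N}}$ to identify $\land_{\alpha\in A}\mathcal{X}_{\alpha1}$ with the single member $\mathcal{X}_{\max A}$, which is exactly what matches the finite-$A$ hypothesis of Proposition~\ref{prop:distV} against the per-$n$ conditional-independence hypothesis of the corollary.
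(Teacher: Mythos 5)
Your proof is correct and is precisely the argument the paper intends: the corollary is stated with a terminal \qed immediately after Proposition~\ref{prop:distV} because it is exactly the specialization $\mathcal{X}_{n1}=\mathcal{X}_{n}$, $\mathcal{X}_{n2}=\mathcal{Y}$, with monotonicity giving $\land_{\alpha\in A}\mathcal{X}_{\alpha 1}=\mathcal{X}_{\max A}$ and the second conditional-independence hypothesis holding trivially since conditioning on $\mathcal{Y}$ makes $\mathcal{Y}$ conditionally independent of anything. Your bookkeeping of both hypotheses and of both sides of \eqref{eq:dist-V} is complete and accurate.
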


%
\begin{remark}
\label{corollary:VI}
The generalization to a general $\mathfrak{B}$ in lieu of $\{1,2\}$ in
Proposition~\ref{prop:distV} seems too cumbersome to be of any value,
and we omit making it explicit.
\end{remark}

Finally we return yet again to complements.\index{complements} In the following it is
investigated what happens if one is given $\mathcal{A}\perp\!\!\!\!
\perp\mathcal{B}$ from $\varLambda$, and one enlarges $\mathcal{A}$ by
an independent complement $\mathcal{X}$ to form $\mathcal{A}'=
\mathcal{A}+\mathcal{X}$, while reducing $\mathcal{B}$ to $
\mathcal{B}'$ through an independent complement $\mathcal{Y}$,
$\mathcal{B}'+\mathcal{Y}=\mathcal{B}$, in such a manner that
$\mathcal{A}'\perp\!\!\!\!\perp\mathcal{B}'$, and that between them
$\mathcal{A}'$ and $\mathcal{B}'$ generate the same $\sigma$-field as
$\mathcal{A}$ and $\mathcal{B}$ do. (We will see in
Section~\ref{section:application} why this is an interesting situation
to consider.)

%
\begin{proposition}[Two-sided complements]
\label{proposition:two-sided}
Let $\{\mathcal{A},\mathcal{B},\mathcal{A}',\mathcal{B}'\}\subset
\varLambda$ be such that $\mathcal{A}+\mathcal{B}=\mathcal{A}'+
\mathcal{B}'$.
\begin{enumerate}[(i)]%
\item
\label{lemma:ii}
There is at most one $\mathcal{X}\in\varLambda$ such that $\mathcal{A}+
\mathcal{X}=\mathcal{A}'$ and $\mathcal{B}'+\mathcal{X}=\mathcal{B}$,
namely $\mathcal{A}'\land\mathcal{B}$.
\item
\label{lemma:iii}
Let $\{\mathcal{X},\mathcal{Y}\}\subset\varLambda$ be such that
$\mathcal{A}+\mathcal{X}=\mathcal{A}'$ and $\mathcal{B}'+\mathcal{Y}=
\mathcal{B}$. The following statements are equivalent:
\begin{enumerate}[(a)]%
\item
\label{lemma:iii:a}
There is $\mathcal{Z}\in\varLambda$ with $\mathcal{A}+\mathcal{Z}=
\mathcal{A}'$ and $\mathcal{B}'+\mathcal{Z}=\mathcal{B}$.
\item
\label{lemma:iii:d}
$\mathcal{A}+(\mathcal{A}'\land\mathcal{B})+\mathcal{B}'=\mathcal{A}+
\mathcal{B}$ ($=\mathcal{A}'+\mathcal{B}'$).
\item
\label{lemma:iii:b}
$\mathcal{X}\subset\mathcal{A}\lor(\mathcal{A}'\land\mathcal
{B})$ and
$\mathcal{Y}\subset\mathcal{B}'\lor(\mathcal{A}'\land\mathcal{B})$.
\item
\label{lemma:iii:g}
There is $\mathcal{X}'\in\varLambda$ with $\mathcal{X}'\subset
\mathcal{B}$ and $\mathcal{A}+\mathcal{X}'=\mathcal{A}'$ and there is
 $\mathcal{Y}'\in\varLambda$ with $\mathcal{Y}'\subset\mathcal{A}'$ and
$\mathcal{B}'+\mathcal{Y}'=\mathcal{B}$.
\item
\label{lemma:iii:e}
$\mathbb{P}[B\vert\mathcal{A}']\in\mathcal{B}/\mathcal
{B}_{[-\infty
,\infty]}$ for $B\in\mathcal{B}$ and $\mathbb{P}[A'\vert
\mathcal{B}]\in\mathcal{A}'/\mathcal{B}_{[-\infty,\infty]}$ for
$A'\in\mathcal{A}'$.
\end{enumerate}
\end{enumerate}
\end{proposition}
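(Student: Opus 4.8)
The plan is to hinge everything on the candidate $\mathcal{W}:=\mathcal{A}'\land\mathcal{B}$, which part~\ref{lemma:ii} forces to be the only conceivable two-sided complement. First I would prove part~\ref{lemma:ii}. Suppose $\mathcal{X}\in\varLambda$ satisfies $\mathcal{A}+\mathcal{X}=\mathcal{A}'$ and $\mathcal{B}'+\mathcal{X}=\mathcal{B}$. The crucial point is that $\mathcal{A},\mathcal{X},\mathcal{B}'$ are then \emph{jointly} independent: $\mathcal{A}\perp\!\!\!\!\perp\mathcal{X}$ comes from $\mathcal{A}+\mathcal{X}=\mathcal{A}'$, and since $\mathcal{A}\lor\mathcal{X}=\mathcal{A}'\perp\!\!\!\!\perp\mathcal{B}'$ one gets $\mathbb{P}(A\cap X\cap B')=\mathbb{P}(A\cap X)\mathbb{P}(B')=\mathbb{P}(A)\mathbb{P}(X)\mathbb{P}(B')$ for $(A,X,B')\in\mathcal{A}\times\mathcal{X}\times\mathcal{B}'$, whence joint independence, and in particular $\mathcal{A}\lor\mathcal{B}'\perp\!\!\!\!\perp\mathcal{X}$. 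Corollary~\ref{lemma}\ref{lemma::iv}, applied with the pairs $(\mathcal{A},\mathcal{X})$ and $(\mathcal{B}',\mathcal{X})$, then yields $\mathcal{A}'\land\mathcal{B}=(\mathcal{A}\lor\mathcal{X})\land(\mathcal{B}'\lor\mathcal{X})=(\mathcal{A}\land\mathcal{B}')\lor\mathcal{X}$; as $\mathcal{A}\land\mathcal{B}'=0_{\varLambda}$ by Proposition~\ref{proposition:basic}, this is $\mathcal{X}$. So $\mathcal{X}=\mathcal{W}$, which gives both the asserted value $\mathcal{A}'\land\mathcal{B}$ and uniqueness at once.

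For part~\ref{lemma:iii} I would introduce the condensed condition $(\ast)$: \emph{$\mathcal{A}+\mathcal{W}=\mathcal{A}'$ and $\mathcal{B}'+\mathcal{W}=\mathcal{B}$} (meaningful since $\mathcal{W}\subset\mathcal{B}$ gives $\mathcal{A}\perp\!\!\!\!\perp\mathcal{W}$ and $\mathcal{W}\subset\mathcal{A}'$ gives $\mathcal{B}'\perp\!\!\!\!\perp\mathcal{W}$), and then show each of the five conditions equivalent to $(\ast)$; throughout I use the inclusions $\mathcal{A}\subset\mathcal{A}'$ and $\mathcal{B}'\subset\mathcal{B}$ forced by the standing hypothesis. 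By part~\ref{lemma:ii} the existence statement~\ref{lemma:iii:a} is literally $(\ast)$. Conditions~\ref{lemma:iii:b} and~\ref{lemma:iii:g} are order-theoretic: since $\mathcal{A}\lor\mathcal{W}\subset\mathcal{A}'=\mathcal{A}\lor\mathcal{X}$, one has $\mathcal{X}\subset\mathcal{A}\lor\mathcal{W}$ iff $\mathcal{A}\lor\mathcal{W}=\mathcal{A}'$, and symmetrically for $\mathcal{Y}$, so~\ref{lemma:iii:b}$\Leftrightarrow(\ast)$; and any $\mathcal{X}'\subset\mathcal{B}$ with $\mathcal{A}+\mathcal{X}'=\mathcal{A}'$ satisfies $\mathcal{X}'\subset\mathcal{A}'\land\mathcal{B}=\mathcal{W}$, forcing $\mathcal{A}'=\mathcal{A}\lor\mathcal{X}'\subset\mathcal{A}\lor\mathcal{W}\subset\mathcal{A}'$, so (with the symmetric statement)~\ref{lemma:iii:g}$\Leftrightarrow(\ast)$.

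Condition~\ref{lemma:iii:d} I would handle with distributivity. The triple $\mathcal{A},\mathcal{W},\mathcal{B}'$ is always jointly independent (same factorisation as above, using $\mathcal{W}\subset\mathcal{A}'$ and $\mathcal{W}\subset\mathcal{B}$), so the sum is well posed, and if $(\ast)$ holds then $\mathcal{A}+\mathcal{W}+\mathcal{B}'=\mathcal{A}'+\mathcal{B}'=\mathcal{A}+\mathcal{B}$. Conversely, assuming~\ref{lemma:iii:d}, the ``in particular'' clause of Corollary~\ref{lemma}\ref{lemma::iv} applied to $\mathcal{A}\lor\mathcal{W}\subset\mathcal{A}'\perp\!\!\!\!\perp\mathcal{B}'$ gives $((\mathcal{A}\lor\mathcal{W})\lor\mathcal{B}')\land\mathcal{A}'=\mathcal{A}\lor\mathcal{W}$; the left side is $(\mathcal{A}\lor\mathcal{B})\land\mathcal{A}'=\mathcal{A}'$ because $\mathcal{A}'\subset\mathcal{A}\lor\mathcal{B}$, so $\mathcal{A}'=\mathcal{A}\lor\mathcal{W}$, and the symmetric argument yields $\mathcal{B}=\mathcal{B}'\lor\mathcal{W}$, i.e.\ $(\ast)$.

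The least mechanical equivalence is~\ref{lemma:iii:e}$\Leftrightarrow(\ast)$, which I would derive from two applications of Proposition~\ref{proposition:complementsII}. Taking $(\mathcal{X},\mathcal{Y},\mathcal{Z})=(\mathcal{A},\mathcal{B},\mathcal{A}')$ there (legitimate as $\mathcal{A}'\subset\mathcal{A}+\mathcal{B}$), condition~\ref{complementII:i} becomes $\mathcal{A}'=(\mathcal{A}\land\mathcal{A}')\lor(\mathcal{B}\land\mathcal{A}')=\mathcal{A}\lor\mathcal{W}$, while in~\ref{complementII:ii} both the conditional independence of $\mathcal{A}$ and $\mathcal{B}$ given $\mathcal{A}'$ and the requirement $\mathbb{P}[A\vert\mathcal{A}']\in\mathcal{A}/\mathcal{B}_{[-\infty,\infty]}$ are automatic from $\mathcal{A}\subset\mathcal{A}'$, so~\ref{complementII:ii} collapses to exactly the first clause of~\ref{lemma:iii:e}. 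Symmetrically, $(\mathcal{X},\mathcal{Y},\mathcal{Z})=(\mathcal{A}',\mathcal{B}',\mathcal{B})$ shows the second clause of~\ref{lemma:iii:e} equivalent to $\mathcal{B}=\mathcal{B}'\lor\mathcal{W}$, and the two halves combine to~\ref{lemma:iii:e}$\Leftrightarrow(\ast)$. The main obstacle is precisely this last collapse --- spotting that the inclusions $\mathcal{A}\subset\mathcal{A}'$ and $\mathcal{B}'\subset\mathcal{B}$ make half the hypotheses of Proposition~\ref{proposition:complementsII}\ref{complementII:ii} vacuous, so that what survives is exactly the one-sided measurability conditions of~\ref{lemma:iii:e}; the joint-independence-plus-distributivity computation behind part~\ref{lemma:ii} is the other spot demanding care.
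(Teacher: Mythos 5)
Your proof is correct. Part~\ref{lemma:ii} is essentially the paper's argument: both rest on the joint independence of $\mathcal{A}$, $\mathcal{X}$, $\mathcal{B}'$ and on Corollary~\ref{lemma} to compute $(\mathcal{A}\lor\mathcal{X})\land(\mathcal{B}'\lor\mathcal{X})=\mathcal{X}$ (you invoke item~\ref{lemma::iv}, the paper its special case~\ref{lemma:i}); the only structural difference is that you identify $\mathcal{X}=\mathcal{A}'\land\mathcal{B}$ in one stroke, whereas the paper first proves uniqueness of solutions and then notes that $\mathcal{A}'\land\mathcal{B}$ is a fortiori a solution. In part~\ref{lemma:iii} your route genuinely diverges, in two respects. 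First, the architecture: you show every condition equivalent to the single statement $(\ast)$ (which is \ref{lemma:iii:a} with $\mathcal{Z}=\mathcal{A}'\land\mathcal{B}$), whereas the paper runs a cycle in which \ref{lemma:iii:a} implies all the rest, \ref{lemma:iii:d}/\ref{lemma:iii:b}/\ref{lemma:iii:g} give back \ref{lemma:iii:a} by a check it dismisses as ``straightforward'', and \ref{lemma:iii:e} implies \ref{lemma:iii:b}; your appeal to the ``in particular'' clause of Corollary~\ref{lemma}\ref{lemma::iv} for \ref{lemma:iii:d}~$\Rightarrow$~$(\ast)$ is a welcome explicit substitute for that unproved step. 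Second, and more substantially, the paper handles \ref{lemma:iii:e} by hand --- a conditional-expectation computation with a $\pi/\lambda$-argument for \ref{lemma:iii:a}~$\Rightarrow$~\ref{lemma:iii:e}, and a separate, longer computation for \ref{lemma:iii:e}~$\Rightarrow$~\ref{lemma:iii:b} --- while you obtain the full equivalence \ref{lemma:iii:e}~$\Leftrightarrow$~$(\ast)$ formally, by applying Proposition~\ref{proposition:complementsII} twice, to the triples $(\mathcal{A},\mathcal{B},\mathcal{A}')$ and $(\mathcal{A}',\mathcal{B}',\mathcal{B})$, and observing that the inclusions $\mathcal{A}\subset\mathcal{A}'$ and $\mathcal{B}'\subset\mathcal{B}$ make the conditional-independence requirement and one of the two measurability requirements in \ref{complementII:ii} vacuous, so that \ref{complementII:i} collapses exactly to $\mathcal{A}'=\mathcal{A}\lor(\mathcal{A}'\land\mathcal{B})$, respectively $\mathcal{B}=\mathcal{B}'\lor(\mathcal{A}'\land\mathcal{B})$. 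Your version buys modularity and symmetry: one previously established result does all the analytic work, and it delivers both directions of the \ref{lemma:iii:e} equivalence at once; what the paper's hands-on version buys is only local self-containedness --- the computations you bypass are in effect the ones already carried out in the proof of Proposition~\ref{proposition:complementsII}.
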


%
\begin{example}
\label{example:nudge}
Let $\xi_{i}$, $i\in\{1,2,3\}$, be independent equiprobable signs.\index{independent ! equiprobable signs} Let
$\mathcal{A}:=\overline{\sigma}(\xi_{1})$, $\mathcal
{B}':=\overline{
\sigma}(\xi_{2})$, $\mathcal{X}:=\overline{\sigma}(\xi_{3})$,
$\mathcal{Y}:=\overline{\sigma}(\{\xi_{1}=\xi_{3}=1 \text{ or
}\xi
_{3}\xi_{2}=\xi_{1}=-1\})$, $\mathcal{A}':=\mathcal{A}+ \mathcal{X}$,
$\mathcal{B}:=\mathcal{B}'+ \mathcal{Y}$. It is then straightforward to
check, for instance by considering the induced partitions, that
$\mathcal{A}+\mathcal{B}= \overline{\sigma}(\xi_{1},\xi_{2},\xi_{3})=
\mathcal{A}'+\mathcal{B}'$, while $\mathcal{A}'\land\mathcal{B}
\subset0_{\varLambda}$, so that in particular $\mathcal{A}+(
\mathcal{A}'\land\mathcal{B})+\mathcal{B}'\ne\mathcal{A}+
\mathcal{B}$. This ``discrete'' example can be tweaked to a
``continuous'' one, just like it was done in
Example~\ref{example:tweak}.
\end{example}

%
\begin{remark}
One would call $\mathcal{X}$ satisfying the relations stipulated by
\ref{lemma:ii} a two-sided complement of $(\mathcal{A},\mathcal{B})$ in
$(\mathcal{A}',\mathcal{B}')$. Unlike the usual ``one-sided''
complement, it is always unique, if it exists. However, by
Example~\ref{example:nudge}, the ``existence of one-sided complements
on both sides'', i.e. what is the starting assumption of
\ref{lemma:iii}, does not ensure the existence of a two-sided complement
(which is \ref{lemma:iii}\ref{lemma:iii:a}).
\end{remark}
\begin{proof}
\ref{lemma:ii}. Suppose the two relations are also satisfied by a
$\mathcal{Y}\in\varLambda$ in lieu of $\mathcal{X}$. Then $\mathcal{Y}
\subset\mathcal{B}=\mathcal{B}'+\mathcal{X}$ and $\mathcal
{Y}\subset
\mathcal{A}'=\mathcal{A}+\mathcal{X}$; hence $\mathcal{Y}\subset(
\mathcal{B}'+\mathcal{X})\land(\mathcal{A}+\mathcal{X})$. But
$\mathcal{B}'$ is independent of $\mathcal{A}'$, and $\mathcal{A}'=
\mathcal{A}+\mathcal{X}$; hence $\mathcal{B}'$, $\mathcal{A}$ and
$\mathcal{X}$ are independent, so
Corollary~\ref{lemma}\ref{lemma:i} entails that $ (\mathcal{B}'+
\mathcal{X})\land(\mathcal{A}+\mathcal{X})=\mathcal{X}$. Thus
$\mathcal{Y}\subset\mathcal{X}$ and by symmetry $\mathcal{X}\subset
\mathcal{Y}$, also; hence $\mathcal{X}=\mathcal{Y}$. If $\mathcal{X}$
satisfies the relations, then they are also a fortiori satisfied by
$\mathcal{A}'\land\mathcal{B}$; by uniqueness $\mathcal{X}=
\mathcal{A}'\land\mathcal{B}$.

\ref{lemma:iii}. Suppose \ref{lemma:iii:a} holds. Then by
\ref{lemma:ii} $\mathcal{Z}=\mathcal{A}'\land\mathcal{B}$ and
\ref{lemma:iii:d}-\ref{lemma:iii:b}-\ref{lemma:iii:g} follow at once.
To see \ref{lemma:iii:e}, let $B'\in\mathcal{B}'$ and $Z\in
\mathcal{Z}$. Then a.s. $\mathbb{P}[B'\cap Z\vert\mathcal{A}']=
\mathbb{P}[B'\cap Z\vert\mathcal{A}\lor\mathcal{Z}]=\mathbbm{1}_{Z}
\mathbb{P}[B'\vert\mathcal{A}\lor\mathcal{Z}]=\mathbbm{1}_{Z}
\mathbb{P}(B')\in\mathcal{Z}/\mathcal{B}_{[-\infty,\infty
]}\subset
\mathcal{B}/\mathcal{B}_{[-\infty,\infty]}$. The general case obtains
by a $\pi/\lambda$-argument and then the second part by symmetry.
Conversely, if any of
\ref{lemma:iii:d}-\ref{lemma:iii:b}-\ref{lemma:iii:g} obtains, then it
is straightforward to check that one can take $\mathcal{Z}=
\mathcal{A}'\land\mathcal{B}$ in \ref{lemma:iii:a} (of course by
\ref{lemma:ii} there is no other choice for $\mathcal{Z}$). Finally we
verify that \ref{lemma:iii:e} implies $\mathcal{X}\subset\mathcal{A}
\lor(\mathcal{A}'\land\mathcal{B})$ (by \ref{lemma:iii:b} and symmetry
it will be enough). The assumption entails that $\mathbb{P}[B\vert
\mathcal{A}']=\mathbb{P}[B\vert\mathcal{A}'\land\mathcal{B}]$
a.s. for
$B\in\mathcal{B}$. Let $X\in\mathcal{X}$; it will be sufficient to
show that a.s. $\mathbb{P}[X\vert\mathcal{A}\lor(\mathcal{A}'\land
\mathcal{B})]=\mathbbm{1}_{X}$, and then by a $\pi/\lambda$-argument,
that $\mathbb{E}[\mathbb{P}[X\vert\mathcal{A}\lor(\mathcal{A}'
\land\mathcal{B})];A\cap B]=\mathbb{P}(X\cap A\cap B)$ for
$A\in\mathcal{A}$, $B\in\mathcal{B}$. Now because $(\mathcal{A}'
\land\mathcal{B})\lor\sigma(B)\subset\mathcal{B}\perp\!\!\!\!
\perp\mathcal{A}$, we find indeed that $\mathbb{E}[\mathbb{P}[X
\vert\mathcal{A}\lor(\mathcal{A}'\land\mathcal{B})];A\cap B]=
\mathbb{E}[\mathbb{P}[X\cap A\vert\mathcal{A}\lor(\mathcal{A}'
\land\mathcal{B})];B]=\mathbb{E}[\mathbb{P}[B\vert\mathcal{A}\lor(
\mathcal{A}'\land\mathcal{B})];X\cap A]=\mathbb{E}[\mathbb{P}[B
\vert\mathcal{A}'\land\mathcal{B}];X\cap A]=\mathbb{E}[\mathbb{P}[B
\vert\mathcal{A}'];X\cap A]=\mathbb{P}(X\cap A\cap B)$.
\end{proof}

\section{An application to the problem of innovation}%
\label{section:application}
Let $\mathcal{F}=(\mathcal{F}_{n})_{n\in\mathbb{N}}$ be a nonincreasing
sequence in $\varLambda$ and let $\mathcal{G}=(\mathcal{G}_{n})_{n
\in\mathbb{N}}$ be a nondecreasing sequence in $\varLambda$ such that
$\mathcal{F}_{n}\lor\mathcal{G}_{n}=\mathcal{F}_{1}\lor\mathcal{G}
_{1}$ for all $n\in\mathbb{N}$. Set $\mathcal{F}_{\infty}:=
\land_{n\in\mathbb{N}}\mathcal{F}_{n}$ and $\mathcal{G}_{\infty}:=
\lor_{n\in\mathbb{N}}\mathcal{G}_{n}$, as well as (for convenience)
$\mathcal{G}_{0}:=0_{\varLambda}$, $\mathcal{F}_{0}:=\mathcal{F}_{1}
\lor\mathcal{G}_{1}$. We are interested in specifying (equivalent)
conditions under which $\mathcal{F}_{\infty}\lor\mathcal{G}_{\infty
}=\mathcal{F}_{0}$. We have of course a priori the inclusion
$\mathcal{F}_{\infty}\lor\mathcal{G}_{\infty}\subset\mathcal{F}
_{0}$.

%
\begin{remark}
\label{remark:weiz}
Since $\mathcal{F}_{n}\lor\mathcal{G}_{\infty}=\mathcal{F}_{0}$ for
all $n\in\mathbb{N}$, the statement $\mathcal{F}_{\infty}\lor
\mathcal{G}_{\infty}=\mathcal{F}_{0}$ is equivalent to $(
\land_{n\in\mathbb{N}}\mathcal{F}_{n})\lor\mathcal{G}_{\infty}=
\land_{n\in\mathbb{N}}(\mathcal{F}_{n}\lor\mathcal{G}_{\infty
})$, and
the conditions of the theorem of \cite{weiz} apply. For instance,
assume (i) $\mathcal{F}_{0}$ is countably generated up to negligible
sets; and (ii) $\mathcal{F}_{\infty}=0_{\varLambda}$. Take a regular
version $(\mathbb{P}_{\mathcal{G}_{\infty}}^{\omega})_{\omega
\in\varOmega}$ of the conditional probability on $\mathcal{F}_{0}$ given
$\mathcal{G}_{\infty}$ [it means that $\mathcal{G}_{\infty}/
\mathcal{B}_{[0,1]}\ni\mathbb{P}_{\mathcal{G}_{\infty}}^{\cdot}(A)=
\mathbb{P}[A\vert\mathcal{G}_{\infty}]$ a.s. for all $A\in
\mathcal{F}_{0}$, and $\mathbb{P}_{\mathcal{G}_{\infty}}^{\omega}$ is
a probability measure on $\mathcal{F}_{0}$ for each $\omega\in
\varOmega$]. Then we can write Theorem.e in  \cite{weiz} as $
\mathcal{F}_{\infty}\lor\mathcal{G}_{\infty}=\mathcal{F}_{0}$ iff
$\mathbb{P}_{\mathcal{G}_{\infty}}^{\omega}$ is trivial on
$\mathcal{F}_{\infty}$ a.s. in $\omega\in\varOmega$.
\end{remark}
We will restrict our attention to the case when there are strong
independence properties. A typical example of the type of situation that
we have in mind and when the equality $\mathcal{F}_{\infty}\lor
\mathcal{G}_{\infty}=\mathcal{F}_{0}$ (nevertheless) fails was the
content of Example~\ref{example:vanishing} in the introduction.

\begin{exampcont}
With regard to
Remark~\ref{remark:weiz}, note that (in the context of
Example~\ref{example:vanishing}) $\mathcal{G}_{\infty}=\overline{
\sigma}(\{A\in\mathcal{M}:A=-A\})$. Indeed one checks easily that\vadjust{\eject}
$\sigma(\xi_{1}\xi_{2},\break \xi_{2}\xi_{3},\ldots)\subset\{A\in
\mathcal{M}:A=-A\}$. Conversely, if for a $C\in(2^{\{-1,1\}})^{
\otimes\mathbb{N}}$, $A=(\xi_{1},\break \xi_{1}\xi_{2},\xi_{2}\xi_{3},
\ldots)^{-1}(C)=-A$, then $A=(\xi_{1},\xi_{1}\xi_{2},\xi_{2}\xi_{3},
\ldots)^{-1}(C)=(-\xi_{1},\xi_{1}\xi_{2},\break \xi_{2}\xi_{3},\ldots)^{-1}(C)=(
\xi_{1}\xi_{2},\xi_{2}\xi_{3}, \ldots)^{-1}(\mathrm
{pr}_{2,3,\ldots}(C))$;
as a consequence, Blackwell's theorem
\cite[Theorem~III.17]{meyer} shows that $A\in\sigma(\xi_{1}\xi_{2},
\xi_{2}\xi_{3},\ldots)$, so that also $\sigma(\xi_{1}\xi_{2},\break  \xi
_{2}\xi_{3},\ldots)\supset\{A\in\mathcal{M}:A=-A\}$. Thus in
Remark~\ref{remark:weiz} we may take $\mathbb{E}_{\mathbb{P}^{\cdot}
_{\mathcal{G}_{\infty}}}[f]=(f+f\circ(-\mathrm{id}_{\varOmega}))/2$ for
$f\in((2^{\{-1,1\}})^{\otimes\mathbb{N}})/\mathcal{B}_{[0,\infty]}$.
For this choice $\mathbb{P}^{\omega}_{\mathcal{G}_{\infty}}$ is
nontrivial on $\mathcal{F}_{\infty}$ for arbitrary $\omega\in\varOmega$ (take,
e.g., $f$ equal to the indicator of the event $A_{\omega}:=\{\xi_{n}=
\omega(n)\text{ for all sufficiently large }n\in\mathbb{N}\}$).
\end{exampcont}

Here is now a general result that motivates the investigation of
two-sided complements in Proposition~\ref{proposition:two-sided}.

%
\begin{proposition}
Let $\mathcal{H}=(\mathcal{H}_{n})_{n\in
\mathbb{N}}$ be a sequence in $\varLambda$ such that $\mathcal{F}_{n}=\mathcal{F}
_{n+1}+ \mathcal{H}_{n+1}$ and $\mathcal{G}_{n+1}=\mathcal{G}_{n}+
\mathcal{H}_{n+1}$ for all $n\in\mathbb{N}_{0}$. (One would say that the sequence $\mathcal{H}$ ``innovates'' $(\mathcal{F},\mathcal{G})$.) Then $\mathcal{H}_{n}=\mathcal{G}_{n}\land\mathcal{F}_{n-1}$ for
all $n\in\mathbb{N}$, and the following statements are equivalent.
\begin{enumerate}[(i)]%
\item
\label{innovation:0}
$\mathcal{F}_{\infty}\lor\mathcal{G}_{\infty}=\mathcal{F}_{0}$.
\item
\label{innovation:i}
$\mathcal{F}_{n}=\mathcal{F}_{\infty}\lor[\lor_{k\in\mathbb{N}_{>n}}
\mathcal{H}_{k}]$ for all $n\in\mathbb{N}_{0}$.
\item
\label{innovation:ii}
$\mathcal{F}_{n}=\mathcal{F}_{\infty}\lor[\lor_{k\in\mathbb
{N}_{> n}}
\mathcal{H}_{k}]$ for some $n\in\mathbb{N}_{0}$.
\end{enumerate}
\end{proposition}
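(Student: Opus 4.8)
The plan is to first pin down the full independence structure latent in the innovating sequence $\mathcal{H}$, and then read off both assertions as essentially algebraic consequences of it. I would begin by recording two telescoping identities: $\mathcal{G}_{n}=\mathcal{H}_{1}\lor\cdots\lor\mathcal{H}_{n}$ (iterating $\mathcal{G}_{n}=\mathcal{G}_{n-1}+\mathcal{H}_{n}$ down to $\mathcal{G}_{0}=0_{\varLambda}$) and, dually, $\mathcal{F}_{0}=\mathcal{F}_{n}\lor\mathcal{H}_{n}\lor\cdots\lor\mathcal{H}_{1}$. The crucial preliminary is that $\mathcal{F}_{n}\perp\!\!\!\!\perp\mathcal{G}_{n}$ for every $n\in\mathbb{N}_{0}$, proved by induction: it is trivial at $n=0$ since $\mathcal{G}_{0}=0_{\varLambda}$; and if $\mathcal{F}_{n-1}\perp\!\!\!\!\perp\mathcal{G}_{n-1}$, then since $\mathcal{F}_{n-1}=\mathcal{F}_{n}+\mathcal{H}_{n}$ gives $\mathcal{F}_{n}\perp\!\!\!\!\perp\mathcal{H}_{n}$, and $(\mathcal{F}_{n}\lor\mathcal{H}_{n})=\mathcal{F}_{n-1}\perp\!\!\!\!\perp\mathcal{G}_{n-1}$, a routine factorization argument shows $\mathcal{F}_{n}$, $\mathcal{H}_{n}$, $\mathcal{G}_{n-1}$ are mutually independent, whence $\mathcal{F}_{n}\perp\!\!\!\!\perp(\mathcal{H}_{n}\lor\mathcal{G}_{n-1})=\mathcal{G}_{n}$. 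In particular $\mathcal{F}_{0}=\mathcal{F}_{n}+\mathcal{G}_{n}$ for all $n$.

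Granting this, the identity $\mathcal{H}_{n}=\mathcal{G}_{n}\land\mathcal{F}_{n-1}$ is exactly the uniqueness of two-sided complements. Taking $\mathcal{A}=\mathcal{G}_{n-1}$, $\mathcal{A}'=\mathcal{G}_{n}$, $\mathcal{B}=\mathcal{F}_{n-1}$, $\mathcal{B}'=\mathcal{F}_{n}$, the preceding paragraph gives $\mathcal{A}+\mathcal{B}=\mathcal{A}'+\mathcal{B}'\,(=\mathcal{F}_{0})$, while $\mathcal{H}_{n}$ satisfies $\mathcal{A}+\mathcal{H}_{n}=\mathcal{A}'$ and $\mathcal{B}'+\mathcal{H}_{n}=\mathcal{B}$; so Proposition~\ref{proposition:two-sided}\ref{lemma:ii} forces $\mathcal{H}_{n}=\mathcal{A}'\land\mathcal{B}=\mathcal{G}_{n}\land\mathcal{F}_{n-1}$. (Alternatively this is immediate from Corollary~\ref{lemma}\ref{lemma::iv}: as $\mathcal{H}_{n}\subset\mathcal{G}_{n}\perp\!\!\!\!\perp\mathcal{F}_{n}$ one has $(\mathcal{H}_{n}\lor\mathcal{F}_{n})\land\mathcal{G}_{n}=\mathcal{H}_{n}$, and $\mathcal{H}_{n}\lor\mathcal{F}_{n}=\mathcal{F}_{n-1}$.)

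For the equivalences, write $\mathcal{K}_{n}:=\lor_{k\in\mathbb{N}_{>n}}\mathcal{H}_{k}$. Since $\mathcal{H}_{k}\subset\mathcal{F}_{k-1}\subset\mathcal{F}_{n}$ for $k>n$, and $\mathcal{F}_{\infty}\subset\mathcal{F}_{n}$, one always has $\mathcal{F}_{\infty}\lor\mathcal{K}_{n}\subset\mathcal{F}_{n}$, so \ref{innovation:i} and \ref{innovation:ii} only assert the reverse inclusion. Moreover $\mathcal{G}_{\infty}=\lor_{k\ge 1}\mathcal{H}_{k}=\mathcal{K}_{0}$, so \ref{innovation:0} is literally \ref{innovation:i} at $n=0$; thus \ref{innovation:i}$\Rightarrow$\ref{innovation:0} and \ref{innovation:i}$\Rightarrow$\ref{innovation:ii} are trivial. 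The substantive implication is \ref{innovation:0}$\Rightarrow$\ref{innovation:i}. Here I would start from $\mathcal{F}_{0}=\mathcal{F}_{\infty}\lor\mathcal{K}_{0}=(\mathcal{F}_{\infty}\lor\mathcal{K}_{n})\lor\mathcal{G}_{n}$ (splitting $\mathcal{G}_{n}=\mathcal{H}_{1}\lor\cdots\lor\mathcal{H}_{n}$ off from $\mathcal{K}_{0}$) together with $\mathcal{F}_{0}=\mathcal{F}_{n}\lor\mathcal{G}_{n}$, so that $\mathcal{F}_{n}\lor\mathcal{G}_{n}=(\mathcal{F}_{\infty}\lor\mathcal{K}_{n})\lor\mathcal{G}_{n}$; intersecting with $\mathcal{F}_{n}$ and invoking Corollary~\ref{lemma}\ref{lemma::iv} (legitimate because $\mathcal{F}_{\infty}\lor\mathcal{K}_{n}\subset\mathcal{F}_{n}\perp\!\!\!\!\perp\mathcal{G}_{n}$) collapses the right-hand side to $\mathcal{F}_{\infty}\lor\mathcal{K}_{n}$, giving $\mathcal{F}_{n}=\mathcal{F}_{\infty}\lor\mathcal{K}_{n}$.

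It remains to close the loop via \ref{innovation:ii}$\Rightarrow$\ref{innovation:0}. If \ref{innovation:i} holds at some $m$, I would propagate it \emph{downward} to $n=0$ purely formally: $\mathcal{F}_{0}=\mathcal{F}_{m}\lor(\mathcal{H}_{1}\lor\cdots\lor\mathcal{H}_{m})=(\mathcal{F}_{\infty}\lor\mathcal{K}_{m})\lor(\mathcal{H}_{1}\lor\cdots\lor\mathcal{H}_{m})=\mathcal{F}_{\infty}\lor\mathcal{K}_{0}=\mathcal{F}_{\infty}\lor\mathcal{G}_{\infty}$, which is \ref{innovation:0}. Combined with the previous paragraph this yields \ref{innovation:0}$\Leftrightarrow$\ref{innovation:i} and $\ref{innovation:ii}\Rightarrow\ref{innovation:0}\Rightarrow\ref{innovation:i}\Rightarrow\ref{innovation:ii}$, hence all three are equivalent. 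The one genuinely non-formal step, and thus the main obstacle, is the cancellation of the independent complement $\mathcal{G}_{n}$ in \ref{innovation:0}$\Rightarrow$\ref{innovation:i}: everything there hinges on having first established $\mathcal{F}_{n}\perp\!\!\!\!\perp\mathcal{G}_{n}$, without which Corollary~\ref{lemma}\ref{lemma::iv} could not be applied and the equality would in general fail (cf.\ Example~\ref{example:vanishing}).
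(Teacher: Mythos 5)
Your proof is correct, and its skeleton coincides with the paper's: both establish the independence $\mathcal{F}_{n}\perp\!\!\!\!\perp\mathcal{G}_{n}$ (equivalently $\mathcal{F}_{n}+\mathcal{G}_{n}=\mathcal{F}_{n+1}+\mathcal{G}_{n+1}$ for all $n\in\mathbb{N}_{0}$), identify $\mathcal{H}_{n}=\mathcal{G}_{n}\land\mathcal{F}_{n-1}$ via the uniqueness of two-sided complements (Proposition~\ref{proposition:two-sided}\ref{lemma:ii}), and then run the same cycle of implications with the same trivial directions. Two points differ. First, the paper merely asserts ``$\mathcal{F}_{n}+\mathcal{G}_{n}=\mathcal{F}_{n+1}+\mathcal{G}_{n+1}$'' without comment, whereas you spell out the inductive grouping argument that justifies the independence; that is a routine but genuine gap you filled. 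Second, and more substantively, for \ref{innovation:0}~$\Rightarrow$~\ref{innovation:i} the paper argues at the level of conditional expectations: for $F\in\mathcal{F}_{n}$, a.s. $\mathbbm{1}_{F}=\mathbb{P}[F\vert\mathcal{F}_{\infty}\lor\mathcal{G}_{n}\lor(\lor_{k\in\mathbb{N}_{>n}}\mathcal{H}_{k})]=\mathbb{P}[F\vert\mathcal{F}_{\infty}\lor(\lor_{k\in\mathbb{N}_{>n}}\mathcal{H}_{k})]$ by the independent-conditioning Lemma~\ref{lema}, exploiting $\mathcal{G}_{n}\perp\!\!\!\!\perp\mathcal{F}_{n}\supset\sigma(F)\lor\mathcal{F}_{\infty}\lor(\lor_{k\in\mathbb{N}_{>n}}\mathcal{H}_{k})$, and then completeness; you instead cancel $\mathcal{G}_{n}$ lattice-theoretically by citing Corollary~\ref{lemma}\ref{lemma::iv} with $\mathcal{X}=\mathcal{F}_{\infty}\lor\mathcal{K}_{n}\subset\mathcal{A}=\mathcal{F}_{n}\perp\!\!\!\!\perp\mathcal{Y}=\mathcal{G}_{n}$. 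Both mechanisms hinge on exactly the same independence fact; yours makes the step a one-liner by reusing a previously established (and, via Proposition~\ref{distributivity}, more heavily proved) result, while the paper's computation is self-contained modulo the basic conditioning lemma. Your further observation that \ref{innovation:0} is literally \ref{innovation:i} at $n=0$ (since $\mathcal{G}_{\infty}=\lor_{k\in\mathbb{N}}\mathcal{H}_{k}$) is a mild streamlining the paper does not make; your downward propagation proving \ref{innovation:ii}~$\Rightarrow$~\ref{innovation:0} is the same computation as the paper's.
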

\begin{proof}
We have $\mathcal{F}_{n}+\mathcal{G}_{n}=
\mathcal{F}_{n+1}+\mathcal{G}_{n+1}$ for all $n\in\mathbb{N}_{0}$. Now the expressions for the $\mathcal{H}_{n}$,
$n\in\mathbb{N}$, follow from
Proposition~\ref{proposition:two-sided}\ref{lemma:ii}. Note also that
$\mathcal{G}_{n}=\mathcal{H}_{1}\lor\cdots\lor\mathcal{H}_{n}$ for
all $n\in\mathbb{N}_{0}$.

The implication \ref{innovation:i} $\Rightarrow$ \ref{innovation:ii}
is trivial.

\ref{innovation:0} $\Rightarrow$ \ref{innovation:i}. The inclusion
$\supset$ is clear. Conversely, if $F \in\mathcal{F}_{n}$, then a.s.
$\mathbbm{1}_{F}=\mathbb{P}[F\vert\mathcal{F}_{0}]=\mathbb{P}[F
\vert\mathcal{F}_{\infty}\lor\mathcal{G}_{\infty}]=\mathbb{P}[F
\vert\mathcal{F}_{\infty}\lor\mathcal{G}_{n}\lor[
\lor_{k\in\mathbb{N}_{>n}}\mathcal{H}_{k}]]=\mathbb{P}[F\vert
\mathcal{F}_{\infty}\lor\break [\lor_{k\in\mathbb{N}_{>n}}\mathcal{H}_{k}]]$,
since $\mathcal{G}_{n}\perp\!\!\!\!\perp\mathcal{F}_{n}\supset
\sigma(F)\lor\mathcal{F}_{\infty}\lor[\lor_{k\in\mathbb{N}_{>n}}
\mathcal{H}_{k}]$.

\ref{innovation:ii} $\Rightarrow$ \ref{innovation:0}. $\mathcal{F}
_{\infty}\lor\mathcal{G}_{\infty}=\mathcal{F}_{\infty}\lor
\mathcal{G}_{n}\lor[\lor_{k\in\mathbb{N}_{>n}}\mathcal{H}_{k}]=
\mathcal{F}_{n}\lor\mathcal{G}_{n}=\mathcal{F}_{0}$.
\end{proof}


\begin{funding}
Financial support from the \gsponsor[id=GS1,sponsor-id=501100004329]{Slovenian Research Agency} is acknowledged (programme No. \gnumber[refid=GS1]{P1-0222}).\end{funding}

\begin{acknowledgement}
The author is grateful to an anonymous Referee
for providing guidance that helped to improve the presentation of this
paper.
\end{acknowledgement}


\begin{thebibliography}{99}

\bibitem{burkholder}
%
\begin{barticle}
\bauthor{\bsnm{Burkholder}, \binits{D.L.}},
\bauthor{\bsnm{Chow}, \binits{Y.S.}}:
\batitle{Iterates of conditional expectation operators}.
\bjtitle{Proc. Am. Math. Soc.}
\bvolume{12}(\bissue{3}),
\bfpage{490}--\blpage{495}
(\byear{1961}).
\bid{doi={10.2307/2034224}, mr={0142144}}
\end{barticle}
%
%
\OrigBibText
%
\begin{barticle}
\bauthor{\bsnm{Burkholder}, \binits{D.L.}},
\bauthor{\bsnm{Chow}, \binits{Y.S.}}:
\batitle{Iterates of conditional expectation operators}.
\bjtitle{Proceedings of the American Mathematical Society}
\bvolume{12}(\bissue{3}),
\bfpage{490}--\blpage{495}
(\byear{1961})
\end{barticle}
%
\endOrigBibText
\bptok{structpyb}%
\endbibitem

\bibitem{chaumont-yor}
%
\begin{bbook}
\bauthor{\bsnm{Chaumont}, \binits{L.}},
\bauthor{\bsnm{Yor}, \binits{M.}}:
\bbtitle{Exercises in Probability: A Guided Tour from Measure Theory to Random Processes, Via Conditioning}.
\bsertitle{Cambridge Series in Statistical and Probabilistic Mathematics}.
\bpublisher{Cambridge University Press}
(\byear{2003}).
\bid{doi={10.1017/\\CBO9780511610813}, mr={2016344}}
\end{bbook}
%
%
\OrigBibText
%
\begin{bbook}
\bauthor{\bsnm{Chaumont}, \binits{L.}},
\bauthor{\bsnm{Yor}, \binits{M.}}:
\bbtitle{Exercises in Probability: A Guided Tour from Measure Theory
to Random
Processes, Via Conditioning}.
\bsertitle{Cambridge Series in Statistical and Probabilistic Mathematics}.
\bpublisher{Cambridge University Press}
(\byear{2003})
\end{bbook}
%
\endOrigBibText
\bptok{structpyb}%
\endbibitem\goodbreak

\bibitem{emery}
%
\begin{bchapter}
\bauthor{\bsnm{{\'E}mery}, \binits{M.}},
\bauthor{\bsnm{Schachermayer}, \binits{W.}}:
\bctitle{A remark on {T}sirelson's stochastic differential equation}.
In: \beditor{\bsnm{Az{\'e}ma}, \binits{J.}},
\beditor{\bsnm{{\'E}mery}, \binits{M.}},
\beditor{\bsnm{Ledoux}, \binits{M.}},
\beditor{\bsnm{Yor}, \binits{M.}} (eds.)
\bbtitle{S{\'e}minaire de Probabilit{\'e}s XXXIII},
pp.~\bfpage{291}--\blpage{303}.
\bpublisher{Springer},
\blocation{Berlin, Heidelberg}
(\byear{1999}).
\bid{doi={10.1007/\\BFb0096508}, mr={1768019}}
\end{bchapter}
%
%
\OrigBibText
%
\begin{bchapter}
\bauthor{\bsnm{{\'E}mery}, \binits{M.}},
\bauthor{\bsnm{Schachermayer}, \binits{W.}}:
\bctitle{A remark on {T}sirelson's stochastic differential equation}.
In: \beditor{\bsnm{Az{\'e}ma}, \binits{J.}},
\beditor{\bsnm{{\'E}mery}, \binits{M.}},
\beditor{\bsnm{Ledoux}, \binits{M.}},
\beditor{\bsnm{Yor}, \binits{M.}} (eds.)
\bbtitle{S{\'e}minaire de Probabilit{\'e}s XXXIII},
pp.~\bfpage{291}--\blpage{303}.
\bpublisher{Springer},
\blocation{Berlin, Heidelberg}
(\byear{1999})
\end{bchapter}
%
\endOrigBibText
\bptok{structpyb}%
\endbibitem

\bibitem{emery-sch}
%
\begin{bchapter}
\bauthor{\bsnm{{\'E}mery}, \binits{M.}},
\bauthor{\bsnm{Schachermayer}, \binits{W.}}:
\bctitle{On {V}ershik's standardness criterion and {T}sirelson's notion of cosiness}.
In: \beditor{\bsnm{Az{\'e}ma}, \binits{J.}},
\beditor{\bsnm{{\'E}mery}, \binits{M.}},
\beditor{\bsnm{Ledoux}, \binits{M.}},
\beditor{\bsnm{Yor}, \binits{M.}} (eds.)
\bbtitle{S{\'e}minaire de Probabilit{\'e}s XXXV},
pp.~\bfpage{265}--\blpage{305}.
\bpublisher{Springer},
\blocation{Berlin, Heidelberg}
(\byear{2001}).
\bid{doi={10.1007/978-3-540-44671-2\_20}, mr={1837293}}
\end{bchapter}
%
%
\OrigBibText
%
\begin{bchapter}
\bauthor{\bsnm{{\'E}mery}, \binits{M.}},
\bauthor{\bsnm{Schachermayer}, \binits{W.}}:
\bctitle{On {V}ershik's standardness criterion and {T}sirelson's
notion of
cosiness}.
In: \beditor{\bsnm{Az{\'e}ma}, \binits{J.}},
\beditor{\bsnm{{\'E}mery}, \binits{M.}},
\beditor{\bsnm{Ledoux}, \binits{M.}},
\beditor{\bsnm{Yor}, \binits{M.}} (eds.)
\bbtitle{S{\'e}minaire de Probabilit{\'e}s XXXV},
pp.~\bfpage{265}--\blpage{305}.
\bpublisher{Springer},
\blocation{Berlin, Heidelberg}
(\byear{2001})
\end{bchapter}
%
\endOrigBibText
\bptok{structpyb}%
\endbibitem

\bibitem{feldman}
%
\begin{barticle}
\bauthor{\bsnm{Feldman}, \binits{J.}}:
\batitle{Decomposable processes and continuous products of probability spaces}.
\bjtitle{J. Funct. Anal.}
\bvolume{8}(\bissue{1}),
\bfpage{1}--\blpage{51}
(\byear{1971}).
\bid{doi={10.1016/0022-\\1236(71)90017-6}, mr={0290436}}
\end{barticle}
%
%
\OrigBibText
%
\begin{barticle}
\bauthor{\bsnm{Feldman}, \binits{J.}}:
\batitle{Decomposable processes and continuous products of probability spaces}.
\bjtitle{Journal of Functional Analysis}
\bvolume{8}(\bissue{1}),
\bfpage{1}--\blpage{51}
(\byear{1971})
\end{barticle}
%
\endOrigBibText
\bptok{structpyb}%
\endbibitem

\bibitem{kallenberg}
%
\begin{bbook}
\bauthor{\bsnm{Kallenberg}, \binits{O.}}:
\bbtitle{{Foundations of Modern Probability}}.
\bsertitle{Probability and Its Applications}.
\bpublisher{Springer},
\blocation{New York Berlin Heidelberg}
(\byear{1997}).
\bid{mr={1464694}}
\end{bbook}
%
%
\OrigBibText
%
\begin{bbook}
\bauthor{\bsnm{Kallenberg}, \binits{O.}}:
\bbtitle{{Foundations of Modern Probability}}.
\bsertitle{Probability and Its Applications}.
\bpublisher{Springer},
\blocation{New York Berlin Heidelberg}
(\byear{1997})
\end{bbook}
%
\endOrigBibText
\bptok{structpyb}%
\endbibitem

\bibitem{lindvall1986}
%
\begin{barticle}
\bauthor{\bsnm{Lindvall}, \binits{T.}},
\bauthor{\bsnm{Rogers}, \binits{L.C.G.}}:
\batitle{Coupling of multidimensional diffusions by reflection}.
\bjtitle{Ann. Probab.}
\bvolume{14}(\bissue{3}),
\bfpage{860}--\blpage{872}
(\byear{1986}).
\bid{mr={0841588}}
\end{barticle}
%
%
\OrigBibText
%
\begin{barticle}
\bauthor{\bsnm{Lindvall}, \binits{T.}},
\bauthor{\bsnm{Rogers}, \binits{L.C.G.}}:
\batitle{Coupling of multidimensional diffusions by reflection}.
\bjtitle{The Annals of Probability}
\bvolume{14}(\bissue{3}),
\bfpage{860}--\blpage{872}
(\byear{1986})
\end{barticle}
%
\endOrigBibText
\bptok{structpyb}%
\endbibitem

\bibitem{meyer}
%
\begin{bbook}
\bauthor{\bsnm{Meyer}, \binits{P.A.}}:
\bbtitle{Probability and Potentials}.
\bsertitle{Blaisdell book in pure and applied mathematics}.
\bpublisher{Blaisdell Publishing Company}
(\byear{1966}).
\bid{mr={0205288}}
\end{bbook}
%
%
\OrigBibText
%
\begin{bbook}
\bauthor{\bsnm{Meyer}, \binits{P.A.}}:
\bbtitle{Probability and Potentials}.
\bsertitle{Blaisdell book in pure and applied mathematics}.
\bpublisher{Blaisdell Publishing Company}
(\byear{1966})
\end{bbook}
%
\endOrigBibText
\bptok{structpyb}%
\endbibitem

\bibitem{neveu}
%
\begin{bbook}
\bauthor{\bsnm{Neveu}, \binits{J.}},
\bauthor{\bsnm{Speed}, \binits{T.P.}}:
\bbtitle{Discrete-parameter Martingales}.
\bsertitle{North-Holland mathematical library}.
\bpublisher{North-Holland}
(\byear{1975}).
\bid{mr={0402915}}
\end{bbook}
%
%
\OrigBibText
%
\begin{bbook}
\bauthor{\bsnm{Neveu}, \binits{J.}},
\bauthor{\bsnm{Speed}, \binits{T.P.}}:
\bbtitle{Discrete-parameter Martingales}.
\bsertitle{North-Holland mathematical library}.
\bpublisher{North-Holland}
(\byear{1975})
\end{bbook}
%
\endOrigBibText
\bptok{structpyb}%
\endbibitem

\bibitem{revuz-yor}
%
\begin{bbook}
\bauthor{\bsnm{Revuz}, \binits{D.}},
\bauthor{\bsnm{Yor}, \binits{M.}}:
\bbtitle{Continuous Martingales and Brownian Motion}.
\bsertitle{Grundlehren der mathematischen Wissenschaften}.
\bpublisher{Springer}
(\byear{2004}).
\bid{doi={10.1007/\\978-3-662-21726-9}, mr={1083357}}
\end{bbook}
%
%
\OrigBibText
%
\begin{bbook}
\bauthor{\bsnm{Revuz}, \binits{D.}},
\bauthor{\bsnm{Yor}, \binits{M.}}:
\bbtitle{Continuous Martingales and Brownian Motion}.
\bsertitle{Grundlehren der mathematischen Wissenschaften}.
\bpublisher{Springer}
(\byear{2004})
\end{bbook}
%
\endOrigBibText
\bptok{structpyb}%
\endbibitem

\bibitem{schramm}
%
\begin{barticle}
\bauthor{\bsnm{Schramm}, \binits{O.}},
\bauthor{\bsnm{Smirnov}, \binits{S.}},
\bauthor{\bsnm{Garban}, \binits{C.}}:
\batitle{On the scaling limits of planar percolation}.
\bjtitle{Ann. Probab.}
\bvolume{39}(\bissue{5}),
\bfpage{1768}--\blpage{1814}
(\byear{2011}).
\bid{doi={10.1214/11-AOP659}, mr={2884873}}
\end{barticle}
%
%
\OrigBibText
%
\begin{barticle}
\bauthor{\bsnm{Schramm}, \binits{O.}},
\bauthor{\bsnm{Smirnov}, \binits{S.}},
\bauthor{\bsnm{Garban}, \binits{C.}}:
\batitle{On the scaling limits of planar percolation}.
\bjtitle{The Annals of Probability}
\bvolume{39}(\bissue{5}),
\bfpage{1768}--\blpage{1814}
(\byear{2011})
\end{barticle}
%
\endOrigBibText
\bptok{structpyb}%
\endbibitem

\bibitem{tsirelson}
%
\begin{barticle}
\bauthor{\bsnm{Tsirelson}, \binits{B.}}:
\batitle{Noise as a {B}oolean algebra of $\sigma$-fields}.
\bjtitle{Ann. Probab.}
\bvolume{42}(\bissue{1}),
\bfpage{311}--\blpage{353}
(\byear{2014}).
\bid{doi={10.1214/13-AOP861}, mr={3161487}}
\end{barticle}
%
%
\OrigBibText
%
\begin{barticle}
\bauthor{\bsnm{Tsirelson}, \binits{B.}}:
\batitle{Noise as a {B}oolean algebra of $\sigma$-fields}.
\bjtitle{The Annals of Probability}
\bvolume{42}(\bissue{1}),
\bfpage{311}--\blpage{353}
(\byear{2014})
\end{barticle}
%
\endOrigBibText
\bptok{structpyb}%
\endbibitem

\bibitem{weiz}
%
\begin{barticle}
\bauthor{\bsnm{Weizs\"acker}, \binits{H.V.}}:
\batitle{Exchanging the order of taking suprema and countable intersections of $\sigma$-algebras}.
\bjtitle{Ann. I.H.P. Probab. Stat.}
\bvolume{19}(\bissue{1}),
\bfpage{91}--\blpage{100}
(\byear{1983}).
\bid{mr={0699981}}
\end{barticle}
%
%
\OrigBibText
%
\begin{barticle}
\bauthor{\bsnm{Weizs\"acker}, \binits{H.V.}}:
\batitle{Exchanging the order of taking suprema and countable intersections of $\sigma$-algebras}.
\bjtitle{Annales de l'I.H.P. Probabilit\'es et statistiques}
\bvolume{19}(\bissue{1}),
\bfpage{91}--\blpage{100}
(\byear{1983})
\end{barticle}
%
\endOrigBibText
\bptok{structpyb}%
\endbibitem

\bibitem{williams}
%
\begin{bbook}
\bauthor{\bsnm{Williams}, \binits{D.}}:
\bbtitle{Probability with Martingales}.
\bsertitle{Cambridge mathematical textbooks}.
\bpublisher{Cambridge University Press}
(\byear{1991}).
\bid{doi={10.1017/\\CBO9780511813658}, mr={1155402}}
\end{bbook}
%
%
\OrigBibText
%
\begin{bbook}
\bauthor{\bsnm{Williams}, \binits{D.}}:
\bbtitle{Probability with Martingales}.
\bsertitle{Cambridge mathematical textbooks}.
\bpublisher{Cambridge University Press}
(\byear{1991})
\end{bbook}
%
\endOrigBibText
\bptok{structpyb}%
\endbibitem

\bibitem{Yor1992}
%
\begin{barticle}
\bauthor{\bsnm{Yor}, \binits{M.}}:
\batitle{Tsire{l's}on's equation in discrete time}.
\bjtitle{Probab. Theory Relat. Fields}
\bvolume{91}(\bissue{2}),
\bfpage{135}--\blpage{152}
(\byear{1992}).
\bid{doi={10.1007/BF01291422}, mr={1147613}}
\end{barticle}
%
%
\OrigBibText
%
\begin{barticle}
\bauthor{\bsnm{Yor}, \binits{M.}}:
\batitle{Tsire{l's}on's equation in discrete time}.
\bjtitle{Probability Theory and Related Fields}
\bvolume{91}(\bissue{2}),
\bfpage{135}--\blpage{152}
(\byear{1992})
\end{barticle}
%
\endOrigBibText
\bptok{structpyb}%
\endbibitem

\end{thebibliography}
\end{document}